\theoremstyle{plain}
\newtheorem{theorem}{Theorem}
\newtheorem{corollary}[theorem]{Corollary}
\newtheorem{lemma}[theorem]{Lemma}
\newtheorem{proposition}[theorem]{Proposition}
\theoremstyle{definition}
\newtheorem{remark}[theorem]{Remark}
\newtheorem{example}[theorem]{Example}
\newtheorem{question}[theorem]{Question}
\newtheorem{definition}[theorem]{Definition}
\newcommand{\bb}[1]{\llbracket {#1}\rrbracket}
\newcommand{\acts}{\curvearrowright}
\newcommand{\G}{\Gamma}
\newcommand{\p}{\varphi}
\newcommand{\e}{\varepsilon}
\newcommand{\h}{\mathfrak{h}}
\newcommand{\s}{\mathbf{s}}
\renewcommand{\r}{\mathbf{r}}
\renewcommand{\d}{\mathrm{d}}
\newcommand{\IZ}{\mathbb{Z}}
\newcommand{\IQ}{\mathbb{Q}}
\newcommand{\M}{\mathrm{M}}
\DeclareMathOperator{\tr}{\mathrm{tr}}
\DeclareMathOperator{\ran}{\mathrm{ran}}
\DeclareMathOperator{\dom}{\mathrm{dom}}
\DeclareMathOperator{\var}{\mathrm{var}}
\DeclareMathOperator{\vspan}{\mathrm{span}}
\DeclareMathOperator{\bsigma}{\mathbf{\Sigma}}
\DeclareMathOperator{\HA}{\mathrm{HA}}
\DeclareMathOperator{\SA}{\mathrm{SA}}
\DeclareMathOperator{\NSA}{\mathrm{NSA}}
\DeclareMathOperator{\B}{\mathrm{Be}}
\DeclareMathOperator{\Sym}{\mathrm{Sym}}
\DeclareMathOperator{\Aut}{\mathrm{Aut}}
\DeclareMathOperator{\Fix}{\mathrm{Fix}}
\DeclareMathOperator{\ssi}{\Leftrightarrow}
\title{A free product formula for the sofic dimension}
\author{Robert Graham}
\author{Mikael Pichot}
\begin{document}
\begin{abstract}
It is proved that if $G=G_1*_{G_3}G_2$ is free product of probability measure preserving $s$-regular ergodic discrete groupoids amalgamated over an amenable subgroupoid $G_3$, then the sofic dimension $s(G)$ satisfies the equality
\[
s(G)=\h(G_1^0)s(G_1)+\h(G_2^0)s(G_2)-\h(G_3^0)s(G_3)
\]
where $\h$ is the normalized Haar measure on $G$.
\end{abstract}

\maketitle

Let $G$ be a group. The sofic dimension of $G$ is  an  asymptotic invariant that accounts for the number of unital maps \[
\sigma\colon F^n_{\pm}\to \Sym(d)
\] 
from the ``Cayley ball'' $F^n_{\pm}$ of radius $n$ in $G$ into the symmetric group $\Sym(d)$, where
 $F\subset G$ is a finite set, $n$ is an integer, $d$ is a `very large' integer and the  maps $\sigma$ are  multiplicative and free  up to an error $\delta>0$ relative to the normalized Hamming distance on $\Sym(d)$ (see \textsection\ref{S- s(G)} below). If $\SA(F,n,\delta,d)$ is the (finite) set of all such maps, and $\NSA:=\left |\{\sigma_{|F},\ \sigma\in \SA\}\right|$, the sofic dimension of $F$ is:
\[
s(F)=\inf_{n\in\mathbb{N}}\inf_{\delta>0}\limsup_{d\rightarrow\infty}\frac{\log \NSA(F,n,\delta,d)}{d\log d}
\]
(so the limit is on $d$ first, and then on $\delta$ and $n$). This definition  was considered in \cite{DKP1} and \cite{DKP2}. It is a combinatorial version of Voiculescu's (microstate) free entropy dimension $\delta(F)$,  which can be defined by a similar formula involving maps
\[
\sigma\colon F^n_{\pm}\to U(d)
\] 
into the unitary group $U(d)$ (see \cite{Voi96, Jun02}). It can be shown that the value of $s(F)$ doesn't dependent on the finite generating set $F$ of $G$ and is therefore denoted $s(G)$. A limiting process allows to define of $s(G)$ for an arbitrary group $G$.

The definition of the sofic dimension can be extended to probability measure preserving (pmp) actions of countable groups, their orbit equivalence relations, and more generally to discrete pmp groupoids. We refer to \cite[Definition 2.3]{DKP2} for the general groupoid definition. An interesting feature of $s$ is to provide combinatorial proofs of statements in orbit equivalence theory (for example, Corollary 7.5 in  \cite{DKP1} reproves Gaboriau's theorem that the free groups on $p$ generators are pairwise non orbit equivalent using the counting method).

Let $G$ be a pmp groupoid and assume that $G=G_1*_{G_3} G_2$ is an amalgamated free product over a subgroupoid $G_3$.  A free product formula of the form 
\[
s(G)=s(G_1)+s(G_2)-s(G_3)
\] 
is known to hold in the following cases (under some technical assumptions, for example ``finitely generated'' and/or ``$s$-regularity''):
\begin{enumerate}
\item $G$, $G_1$, $G_2$, and  $G_3$ are a pmp equivalence relations on $(X,\mu)$ and $G_3$ is amenable as an equivalence relation: see \cite[Theorem 1.2]{DKP1}.
\item $G$, $G_1$, $G_2$, and  $G_3$ are countable groups and $G_3$ is an amenable group: see \cite[Theorem 4.10]{DKP2}.
\item $G$ is the crossed product groupoid $G:=G_1*G_2\ltimes X$ of a pmp action $(G_1* G_2)\acts (X,\mu)$, where $G_1$ and $G_2$ are countable groups and $X$ is a standard probability space. Here $G_3$ is assumed to be the trivial group but the action of $G_1*G_2$ is \emph{not necessarily free} (if free this is covered by (1)): see \cite[Theorem 6.4]{DKP2}.
\end{enumerate} 
The general strategy to establish this sort of formula was devised by Voiculescu for the free entropy dimension: see in particular \cite{Voi91,Voi96,Voi98}. 

The proofs of the above results in \cite{DKP1, DKP2}  apply distinct tools to handle the amenable amalgamated part, namely the Connes--Feldmann-Weiss theorem in (1), and the Ornstein--Weiss quasi-tiling theorem in (2). This was a reason why it was hardly conceivable to incorporate an amenable amalgamated subgroup $G_3$ in (3); in fact,  the technical details would presumably  (to quote  \cite[\textsection 6]{DKP2}) be `formidable' even if the action of $G_3\acts X$ is essentially free.

We follow a different approach here, based on the use of Bernoulli shifts as a ``correspondence principle'' 
\begin{center}
groupoids $\leftrightsquigarrow$ equivalence relations 
\end{center}
by which we mean that proving a result for (pmp) equivalence relations  `automatically' implies an a priori more general statement for (pmp) groupoids in a variety of situation, and in particular for the computation of $s$ (see \textsection \ref{S- applications} for more details).

The exact assumptions that we need for the free product formula are described in the following statement, which is the main result of this paper: 

\begin{theorem}\label{T - FPF}
Let $G$ be a discrete pmp groupoid of the form $G=G_1*_{G_3}G_2$, where $G_1,G_2$ are $s$-regular ergodic subgroupoids of $G$ and $G_3$ is an amenable groupoid, then 
\[
s(G)=\h(G_1^0)s(G_1)+\h(G_2^0)s(G_2)-\h(G_3^0)s(G_3)
\]
where $\h$ is the normalized Haar measure on $G$ and $G^0$ is the object space of $G$.
\end{theorem}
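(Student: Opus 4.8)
The plan is to deduce the formula from the equivalence-relation case~(1) by means of the Bernoulli-shift correspondence advertised in the introduction, so that the amenable amalgam $G_3$ is dealt with once and for all by the machinery already packaged inside~(1), rather than through a new quasi-tiling argument. First I would attach to each of $G,G_1,G_2,G_3$ a Bernoulli shift and replace it by the orbit equivalence relation of the resulting essentially free action, obtaining pmp equivalence relations $\mathcal R,\mathcal R_1,\mathcal R_2,\mathcal R_3$. The backbone of the reduction is a dictionary lemma asserting
\[
s(\mathcal R)=s(G)
\]
for any discrete pmp groupoid $G$, which I would prove separately from the free-product hypothesis: a Bernoulli extension is the measure-theoretically freest way to principalize a groupoid, and I expect it to leave the microstate count defining $s$ unchanged up to the admissible errors.

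Next I would check that the correspondence is functorial for amalgamated free products and transports the structural hypotheses. The functoriality
\[
\mathcal R=\mathcal R_1*_{\mathcal R_3}\mathcal R_2
\]
should follow from the fact that the Bernoulli construction is a co-induction-type operation that commutes with the amalgam; amenability of the groupoid $G_3$ should give amenability of the equivalence relation $\mathcal R_3$, and ergodicity together with $s$-regularity of $G_1,G_2$ should pass to $\mathcal R_1,\mathcal R_2$. Granting these, case~(1) applies to $\mathcal R=\mathcal R_1*_{\mathcal R_3}\mathcal R_2$ and yields
\[
s(\mathcal R)=s(\mathcal R_1)+s(\mathcal R_2)-s(\mathcal R_3).
\]

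The weights $\h(G_i^0)$ then enter through the normalization built into the definition of $s$. Each $s(G_i)$ is computed intrinsically, with the Haar measure of $G_i$ rescaled so that $G_i^0$ carries total mass $1$; but inside $G$ the subgroupoid $G_i$ occupies only a fraction $\h(G_i^0)$ of the object space, so in the counting its microstates act effectively on a set of size $\h(G_i^0)\,d$ rather than on all of $\{1,\dots,d\}$. Under the $d\log d$ normalization this contributes exactly the factor $\h(G_i^0)$, converting each term $s(\mathcal R_i)$ into $\h(G_i^0)s(G_i)$. I would formalize this as a restriction/compression identity for $s$, analogous to the compression formula for cost, and verify that it is compatible with the Bernoulli construction so that, combined with the two displayed identities, it produces the stated equality.

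The step I expect to be the main obstacle is the simultaneous control of functoriality and of this weight bookkeeping. One has to show that the Bernoulli shift intertwines the groupoid amalgamated free product with the equivalence-relation amalgamated free product \emph{without creating spurious identifications across the amalgam}, and that when the object spaces $G_1^0,G_2^0,G_3^0$ are proper subsets of $G^0$ the several bases produced by the construction can be coherently amalgamated so that the relative-measure factors propagate correctly through the entire argument. By contrast, deriving amenability of $\mathcal R_3$ from amenability of $G_3$ should be routine; yet it is exactly this passage that allows the single equivalence-relation theorem to absorb the amenable amalgam, and thereby unifies the treatment of cases~(1)--(3) that previously required different analytic inputs.
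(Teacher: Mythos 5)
Your proposal follows essentially the same route as the paper: the Bernoulli dictionary $s(G\ltimes X_0^G)=s(G)$ (Theorem \ref{T-action Bernoulli infinite}), the compatibility of crossed products with amalgamated free products (Lemma \ref{L - free product decompositions actions}), the scaling/compression formula (Proposition \ref{P-scaling}) supplying the weights $\h(G_i^0)$, and an appeal to the equivalence-relation case of \cite{DKP1}. The obstacle you single out is resolved in the paper by first compressing to $\tilde G:=G_{|G_3^0}$ and using a \emph{single} Bernoulli shift of $\tilde G$ rather than separate shifts for each factor: its restriction to the ergodic subgroupoids $\tilde G_1,\tilde G_2$ is again Bernoulli (Lemma \ref{L - subshifts are Bernoulli}), while the possibly non-ergodic amenable amalgam $G_3$ is handled instead by the fact that $s(G_3\ltimes Y)$ does not depend on the action $Y$ (Proposition \ref{P - amenable groupoid actions}).
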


The more technical assumptions in this result can be weakened slightly. For example, one way to remove the $s$-regularity assumption, following Voiculescu's idea, see e.g.\ \cite[Remark 4.8]{Voi98}, is to replace the $\limsup$ in the definition of $s(F)$ by a limit along a fixed ultrafilter $\omega$. What is rather unclear  is the extend to which the assumption that $G_3$ is amenable is essential. 
Cohomological tools can be used to prove a similar formula for the first $L^2$ Betti number $\beta_1$ under the much weaker assumption that $\beta_1(G_3)=0$. (This  is a result of L\"uck see \cite[Theorem A.1]{BDJ} for the group case.) Furthermore Mineyev and Shlyakhtenko \cite{MS}  have shown that Voiculescu's `non-microstate' free entropy dimension $\delta^*$ satisfies $\delta^*(G)=\beta_1(G)-\beta_0(G)+1$ for any finitely generated group $G$, and therefore we have the formula
\[
\delta^*(G_1*_{G_3}G_2)=\delta^*(G_1)+\delta^*(G_2)-\delta^*(G_3)
\]
where $G_1$ and $G_2$ are finitely generated groups and $G_3$ is a group such that $\beta_1(G_3)=0$. A fundamental relation between the microstate and the non-microstate approach to free entropy is provided by the Biane--Capitaine--Guionnet inequality $\delta\leq \delta^*$ \cite{BCG}. A free product formula for $\delta_0$ has been established in \cite{BDJ} for amalgamation of ($\delta_0$-regular) groups over an amenable subgroup (where $\delta_0\leq \delta$ is a technical modification of $\delta$ not depending on the generating set of the group, see  \cite[Section 6]{Voi96} and \cite{Voi98}). We also note that the above correspondence principle for $\delta_0$  is probably less useful as the amenable part can always be handled uniformly using the hyperfiniteness of von Neumann algebra $LG_3$ (see in particular \cite{Jun}; for example, the proof in \cite{BDJ} in the group case doesn't rely on quasi-tilings). Concerning pmp equivalence relations,  a free product formula has been established by Gaboriau \cite{Gab} for the cost,  allowing for amalgamations over amenable subrelations, and by Shlyakhtenko \cite{Shl} for $\delta_0$, for free product with trivial amalgamation.  

\begin{question}
Can the assumption that $G_3$ is amenable in Theorem \ref{T - FPF} be weakened (for example to $\beta_1(G_3)=1$)?
\end{question}

The paper is organized as follows:  \textsection\ref{S- s(G)} and \textsection\ref{S - groupoid actions} establish basic facts about pmp groupoids and their actions. In the case of $s(G)$, the correspondence ``groupoids $\leftrightsquigarrow$ equivalence relations'' is achieved by  using the formula  $s(G)=s(G\ltimes X_0^G)$, where $G\acts X_0^G$ is a Bernoulli shift, see Theorem  \ref{T-action Bernoulli infinite} in \textsection \ref{S - Bernoulli corresp} (other applications of the correspondence principle are given in \textsection\ref{S- applications}). The proof of this formula uses the idea  in a result of L. Bowen \cite[Theorem 8.1]{Bowen2010} for the sofic entropy, as explained in \textsection\ref{S - Bowen groupoids}. Other difficulties inherent to the groupoid setting are dealt with in \textsection \ref{S - overlapping gen}, \textsection\ref{S - linear}, \textsection\ref{S -HA} (these difficulties were avoided in \cite{DKP2} by working with groups and their actions  rather than with general groupoids).  
The proof of Theorem \ref{T-action Bernoulli infinite} is given in Section \ref{S - Bernoulli corresp}. In \textsection \ref{S -Scaling} we prove a scaling formula for $s(G)$. In \textsection\ref{S- applications} we prove Theorem \ref{T - FPF} by putting together these ingredients.

\section{Review of $s(G)$}\label{S- s(G)}

Recall that a discrete standard Borel groupoid $G$ with base (=set of objects) $G^0$, source map $\s\colon G\to G^0$ and range map $\r\colon G\to G^0$, is said to be \emph{probability measure preserving (pmp)} with respect to a Borel probability measure $\mu$ on $G^0$ if the left and right Haar measures  $\h$ and $\h^{-1}$  on $G$ coincide:
\[
\h=\h^{-1}
\]
where
\[
\h(A):= \int_{G^0} |A^e| \, \d\mu (e)\text{  and  }  
\h^{-1}(A):= \int_{G^0} |A_e| \, \d\mu (e) 
\]
and $A\subset G$ is a Borel set with $A^e:=\r^{-1} (e)\cap A$ and $A_e:=\s^{-1} (e)\cap A$ for  $e\in G^0$. Then $\h_{|G^0}=\h^{-1}_{|G^0}=\mu$ so we simply denote by $\h$ the measure $\mu$ on $G^0$.

A \emph{bisection} is a Borel subset $s\subset G$ such that the restrictions of $\s$ and $\r$ to $s$ are Borel isomorphisms onto $G^0$. The set of bisections form a group called the \emph{full group} of $G$ and are denoted $[G]$. A \emph{partial bisection} is a Borel subset $s\subset G$ such that $\s$ and $\r$ are injective in restriction to $s$. The set of partial bisection form a Polish inverse monoid called the \emph{full inverse semigroup} (or the full pseudogroup) of $G$ and denoted $\bb G$. For $s\in \bb G$ let $\dom(s):=s^{-1}s\subset G^0$ and  $\ran(s):=ss^{-1}\subset G^0$.

For example, if $G:=\{1,\ldots, d\}^2$ is the transitive equivalence relation on the set $\{1,\ldots d\}$ with $d\in \IZ_{\geq 1}$ elements, then $[G]=\Sym(d)$ is the symmetric group on $d$ letters and $\bb G$ is the inverse semigroup of partial permutations. We denote the latter by $\bb d$.

The semigroup $\bb G$ (and $[G]\subset \bb G$) is Polish with respect to  the \emph{uniform distance} 
\[
|s-t|:=\h\{e\in G^0\mid se\neq te\}.
\]
If $G=\{1,\ldots,d\}^2$ then the uniform distance is the normalized Hamming distance on $\bb d$.

The (von Neumann) \emph{trace} on $\bb G$ is given by
\[
\tau(s):=\h(s\cap G^e)=\h\{e\in G^0\mid se=e\}.
\]
It is the restriction to $\bb G\subset LG$ of the finite trace on the von Neumann algebra $LG$ of $G$. 

We have
\[
|s-t|=\tau(s^{-1}s)+\tau(t^{-1}t)-\tau(s^{-1}st^{-1}t)-\tau(st^{-1}).
\]

We will write $\tr$ for the trace on $\bb d$. So 

\begin{center}
$\tr(\sigma)=\frac 1 d\times$number of fixed points of $\sigma \in \bb d$.
\end{center}

If $F\subset \bb G$ is a finite subset and $n\in \IZ_{\geq 1}$ then $F_\pm^n$ denotes the set of all products of at most $n$ elements of $F_\pm:= F\cup F^{-1}\cup\{\mathrm{Id}\}$. For $F\subset \bb G$ we let $\bsigma F\subset \bb G$ denote the  set of sums of elements of $F$ with pairwise orthogonal domains and pairwise orthogonal ranges. 

By definition,  $G$ is \emph{sofic} if its full inverse semigroup $\bb G$ is sofic: 

\begin{definition}\label{D - sofic}
A pmp groupoid $G$ is called sofic  if for every finite set $F\subset \bb G$, $\delta>0$ and $n\in \IZ_{\geq 1}$ there exist $d\in \IZ_{\geq 1}$ and a map 
\[
\sigma\colon \bsigma F_\pm ^n\to  \bb d
\] 
such that
\[
|\sigma(st)-\sigma(s)\sigma(t)|<\delta \tag{i}
\] 
for every $s,t\in  \bsigma F_\pm ^n$ such that $st\in \bsigma F_\pm ^n$ ($\sigma$ is \emph{$\delta$-multiplicative}) and
\[
 |\tr\circ \sigma(s)-\tau(s)|<\delta \tag{ii}
 \]
for every $s\in  F_\pm ^n$ ($\sigma$ is \emph{$\delta$-trace-preserving}).
\end{definition}    
  
 \begin{remark}
If $G$ is a group, one can replace $\bb d$ by $\Sym(d)$ as is easily seen.
 \end{remark} 
  
\begin{remark}
The notion of sofic pmp equivalence relations was introduced by Elek and Lippner in \cite{EL} in terms of graph approximation and in \cite{Oz} by requiring that  $\bb G$ is sofic as in the definition above (compare \cite{DKP1,DKP2}). The sofic property was first considered for groups by Gromov and Weiss  in terms of (Cayley) graph approximation and was studied by Elek and Szabo.   It is a simultaneous generalization of amenability and the LEF property of Vershik and Gordon (see \cite{Pe} for more details).
\end{remark}  
  
 Let 
 \[
 \SA(F,n,\delta,d):=\{\sigma \colon \bsigma F_\pm ^n\to \bb d\text{ satisfying  (i), (ii)} \}
 \] 
and define
 \[
 |\SA(F,n,\delta,d)|_E:=|\{\sigma_{|E}\mid \sigma\in \SA(F,n,\delta,d)\}|
  \]
  for $E\subset F$.
 
 \begin{definition} For $E\subset F\subset \bb G$ finite, $n\in\IZ_{\geq 1}$ and $\delta > 0$ define successively
\begin{align*}
s_E (F,n,\delta ) &:= \limsup_{d\to\infty} \frac{\log |\SA (F,n ,\delta ,d)|_E}{d \log d}  \\
s_E (F,n) &:= \inf_{\delta > 0} s_E (F,n,\delta )\\
s_E (F) &:= \inf_{n\in\IZ_{\geq 1}} s_E (F,n) 
\end{align*}
If $K\subset \bb G$ is an arbitrary subset the sofic dimension of $K$ is 
\[
s(K):=\sup_E\inf_{F} s_E(F)
\]
where $E\subset F\subset K$ are finite subsets. The sofic dimension of $G$ is $s(G):=s(\bb G)$. One defines similarly the lower sofic dimension $\underline s$ and the $\omega$ sofic dimension $s^\omega$ for a ultrafilter $\omega$ on $\IZ_{\geq 1}$ by replacing $\limsup_{d\to \infty}$ by $\liminf_{d\to \infty}$ and $\lim_{d\to \omega}$ respectively.
 \end{definition}
 
 Voiculescu's regularity condition reads:
 
\begin{definition} 
    A pmp groupoid $G$ is \emph{$s$-regular} if $\underline s(G)=s(G)$. 
\end{definition}  
  
Finally we recall

\begin{definition}
A subset $K\subset \bb G$ is \emph{transversally generating} if for any $t\in \bb G$ and $\e>0$ there exist $n\in \IZ_{\geq 1}$ and  $s\in  K_\pm^n$ such that $|t-s|\leq \e$.
\end{definition}  
 
This definition appears in \cite[Definition 2.4]{DKP1} where it is called ``dynamically generating''.  The more classical notion of generating set for pmp equivalence relations (and groupoids) (as in \cite[Definition 2.2]{DKP1}) is that of Connes--Feldmann--Weiss. While being distinct notions, a groupoid is finitely generated in the Connes--Feldmann--Weiss sense if and only if it is transversally finitely generated (by an argument similar  to that in \cite[Proposition 2.6]{DKP1}) so `finitely generated'  is unambiguous for groupoids (and coincide with the usual notion in the group case).

The following result is proved in \cite[Theorem 4.1]{DKP1}.

\begin{theorem}[Invariant of $s$ under orbit equivalence]\label{T - OE invariance}
Let $R$ be pmp equivalence relation and $K,L$ be transversally generating sets. Then $s(K)=s(L)$, $\underline s(K)=\underline s(L)$ and $s^\omega(K)=s^\omega(L)$. 
\end{theorem}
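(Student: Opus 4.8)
The plan is to prove the single inequality $s(K)\le s(L)$ and then invoke the symmetry between $K$ and $L$; the same argument applies verbatim with $\liminf_{d\to\infty}$ or $\lim_{d\to\omega}$ in place of $\limsup_{d\to\infty}$, so the statements for $s$, $\underline s$ and $s^\omega$ all follow at once. Since $s(K)=\sup_E\inf_F s_E(F)$, it suffices to fix a finite $E\subset K$ and show that $\inf_{E\subset F\subset K}s_E(F)\le s(L)$.

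The first fact I would isolate is that a good sofic approximation turns the uniform distance into the normalized Hamming distance: if $\sigma$ is $\delta$-multiplicative and $\delta$-trace-preserving on a large enough set, then feeding (i) and (ii) into the identity $|s-t|=\tau(s^{-1}s)+\tau(t^{-1}t)-\tau(s^{-1}st^{-1}t)-\tau(st^{-1})$ gives $|\sigma(s)-\sigma(t)|\approx|s-t|$ up to an error controlled by $\delta$; in particular $|s-t|\le\e$ forces $\sigma(s)$ and $\sigma(t)$ to be within Hamming distance $\approx\e$. I would also record the volume estimate that the number of elements of $\bb d$ within normalized Hamming distance $\e$ of a fixed element is at most $e^{(\e+o(1))d\log d}$, so that replacing an element by its $\e$-neighborhood costs only a factor $e^{O(\e)d\log d}$ in any count.

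Now fix $\e>0$. Since $L$ transversally generates $R$, each $e\in E$ admits a word $w_e\in L_\pm^{m}$ with $|e-w_e|\le\e$; let $E'\subset L$ be the finite set of letters occurring in the $w_e$. Because $\inf_{F'}s_{E'}(F')\le s(L)$, I may choose finite $E'\subset F'\subset L$ and parameters $n',\delta'$ with $s_{E'}(F',n',\delta')\le s(L)+\e$. Since $K$ transversally generates $R$, each $l\in F'$ admits a $K$-word $v_l$ with $|l-v_l|$ small. Given a $K$-microstate $\sigma\in\SA(F,n,\delta,d)$, where $F$ contains the $K$-letters occurring in the $v_l$ and $n,1/\delta$ are large, I define $\sigma^\flat$ on $\bsigma(F')_\pm^{n'}$ by reading each element through its $v_l$-substitution and applying $\sigma$. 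Applying the lemma of the previous paragraph along words shows that, for $F,n$ large and $\delta$ small, $\sigma^\flat$ is a genuine element of $\SA(F',n',\delta',d)$, and moreover that $|\sigma(e)-\sigma^\flat(w_e)|\le\e$ for every $e\in E$; since $\sigma^\flat(w_e)$ is a product of the values of $\sigma^\flat$ on $E'$, the restriction $\sigma|_E$ is determined by $\sigma^\flat|_{E'}$ up to Hamming distance $\e$.

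Combining this determination statement with the volume estimate yields, for $d$ large,
\[
|\SA(F,n,\delta,d)|_E\le |\SA(F',n',\delta',d)|_{E'}\cdot e^{O(\e)\,d\log d},
\]
whence $s_E(F,n,\delta)\le s_{E'}(F',n',\delta')+O(\e)\le s(L)+O(\e)$. Taking the infimum over $F,n,\delta$ gives $\inf_F s_E(F)\le s(L)+O(\e)$, and letting $\e\to0$ and then $\sup_E$ gives $s(K)\le s(L)$; symmetry finishes the proof. I expect the main obstacle to be the third paragraph: making the definition of $\sigma^\flat$ precise on the $\bsigma$-span, where one must track orthogonality of domains and ranges, and controlling how the multiplicativity and trace errors compound across the word substitutions so that the target parameters $(F',n',\delta')$ are actually reached by tuning $(F,n,\delta)$. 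This is precisely the bookkeeping that the inverse-semigroup structure $\bb R$ of an equivalence relation forces one to carry out carefully, rather than the two clean reductions that flank it.
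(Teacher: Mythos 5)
The paper does not actually prove this theorem: it is imported verbatim from \cite[Theorem 4.1]{DKP1} (with the remark that the finite-generation hypothesis there can be dropped), so there is no internal proof to compare against. Your argument --- expressing each generating set approximately as words in the other, pushing microstates through the substitution, using approximate multiplicativity and trace-preservation together with the identity $|s-t|=\tau(s^{-1}s)+\tau(t^{-1}t)-\tau(s^{-1}st^{-1}t)-\tau(st^{-1})$ to show that sofic approximations nearly preserve the uniform distance, and paying an $e^{O(\varepsilon)d\log d}$ Hamming-ball factor for the multiplicity of the map $\sigma\mapsto\sigma^{\flat}$ --- is precisely the change-of-generators argument of that reference, and it is correct modulo the word-substitution bookkeeping on $\bsigma$-spans that you yourself flag as the remaining work.
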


The result in \cite{DKP1} is stated for finitely generated equivalence relations but the same proof works in the general case (as does the proof of \cite[Theorem 1.2]{DKP1}). The proof in full generality for groupoids is given in \cite[Theorem 2.11]{DKP2}. We won't use this more general result here but will rather deduce it from Theorem \ref{T - OE invariance} as an illustration of the correspondence principle.

\begin{remark}\label{compare norms} It is sometimes convenient to use the 2-norm on $LG$ and its restriction to $\bb G$:
\[
\|s-t\|_2^2:=\tau((s-t)(s-t)^{-1})=\tau((s-t)^{-1}(s-t))
\]
Observe
\[
\|s-t\|_2^2\geq |s-t|
\] 
(with equality on $[G]\subset \bb G$) as
\[
\|s-t\|_2^2=\tau(s^{-1}s)+\tau(t^{-1}t)-2\tau(st^{-1})
\]
and $\tau(st^{-1})\leq \tau(s^{-1}st^{-1}t)$. 
\end{remark}

\section{Actions of groupoids}\label{S - groupoid actions}

 Let $X$ be a standard Borel space endowed with a Borel fibration $p \colon X\to G^0$ where $G^0$ is the base of $G$. If $(\mu^e)_{e\in G^0}$ is a Borel field of probability measures on $X$ we define  a probability measure $\mu$ on $X$ by $\mu:=\int_{G^0} \mu^e\mathrm{d}\h(e)$ where $\h$ is the invariant Haar measure on $G^0$. Recall that a pmp  action of $G$ on the fibered space $(X,\mu)$ is a measurable map 
\[
G*_{G^0}X\ni (g,x)\mapsto gx\in X
\]
(where $G$ fibers via the source map $\s: G\to X_G$) satisfying the usual axioms of an action, and such that 
\[
g_*\mu^{\s(g)}=\mu^{\r(g)}
\]
for ae $g\in G$.  Groupoid actions are denoted  $G\acts X$. The crossed product groupoid is the fiber bundle $G\times_{G^0} X$ endowed with groupoid law defined by $(s,x)(t,y)=(st,y)$ whenever $t(y)=x$.

\begin{example}[Bernoulli shifts] Given a pmp groupoid $G$ with invariant Haar measure $\h$ and a probability space $(X_0,\mu_0)$, consider the probability space 
\[
(X_0^G,\mu):=\int_{G^0} (X_0^{G^e}, \mu_0^{\bigotimes{G^e}})\d\h(e),
\] 
(where $X_0^{G^e}:=\prod_{G^e}X_0$ is the infinite Cartesian  product over $G^e:=\r^{-1}(e)$), endowed with the fibration $X_0^G\to G^0$ and field of measures $(\mu_0^{\bigotimes{G^e}})_{e\in G^0}$. Every element $x\in X_0^G$ can be viewed as a sequence $x:=(x_t)_{t\in G^{e}}$ of elements of $X_0$. The Bernoulli action  $G\acts X_0^G$ is given by 
\[
s ((x_t)_{t\in G^{\r(s)}}):=((x_{s^{-1}t})_{t\in G^{\r(s)}}).
\]  
\end{example}

\begin{remark}
The notion of groupoid action has long been used in ergodic group theory (see for example  \cite{Ram}). They can equivalently be described as actions on bundles (cocycles) as above, or as groupoid extensions which are fiber bijective. In \cite{Bow2} L. Bowen discusses Bernoulli shifts using the latter description.   
\end{remark}

We first prove a few lemmas that will be used in the proof of Theorem \ref{T - FPF}.

\begin{lemma}\label{L - subshifts are Bernoulli}
Let $G\acts X_0^G$ be a Bernoulli action and $H\subset G$ be an ergodic subgroupoid with $G^0=H^0$. 
Then the action $H\acts X_0^{G}$ is isomorphic to a Bernoulli shift over $H$.
\end{lemma}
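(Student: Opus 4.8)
The plan is to realize the restriction $H\acts X_0^G$ as a shift on the bundle of indices $(G,\r)$, on which $H$ acts by left translation, and then to repackage this shift as a Bernoulli shift over $H$ by slicing $G$ into $H$-orbits. This is the groupoid version of the familiar group computation, where restricting a Bernoulli shift $\G\acts X_0^\G$ to a subgroup $H$ decomposes the index set $\G$ into right cosets $H g$ and exhibits $X_0^\G$ as an $H$-Bernoulli shift with base $X_0^{H\backslash \G}$.

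First I would record that, for $h\in H$, the formula $(hx)_t=x_{h^{-1}t}$ means that $h$ re-indexes the coordinates of $X_0^G$ by the map $G^{\s(h)}\to G^{\r(h)}$, $u\mapsto hu$; that is, the restricted action is exactly the Bernoulli shift built on the index bundle $G\to G^0$ (fibered by $\r$) on which $H$ acts by left multiplication. This left action is free, since $hu=u$ forces $h$ to be the unit $\r(u)$. Next I would exploit freeness to decompose the index bundle: choosing a Borel transversal $T\subset G$ meeting each left $H$-orbit exactly once gives an $H$-equivariant Borel isomorphism
\[
H*_{G^0}T:=\{(h,t)\mid \s(h)=\r(t)\}\xrightarrow{\ \sim\ } G,\qquad (h,t)\mapsto ht,
\]
where $H$ acts on the left factor. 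Writing $H^e:=H\cap G^e$, a fiber becomes $G^e\cong\bigsqcup_{h\in H^e}\{t\in T\mid \r(t)=\s(h)\}$, so grouping the coordinates of $X_0^{G^e}=\prod_{u\in G^e}X_0$ by the $H$-index $h$ yields
\[
X_0^{G^e}\cong\prod_{h\in H^e}X_0^{T^{\s(h)}},\qquad T^x:=\{t\in T\mid \r(t)=x\}.
\]
Setting $Y_0^x:=X_0^{T^x}$ with the product measure, one checks that the shift action and the product measures match fiberwise (and the object measure $\h$ is shared since $G^0=H^0$), so $H\acts X_0^G$ is isomorphic to the Bernoulli-type shift over $H$ whose base is the bundle $(Y_0^x)_{x\in H^0}$. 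I can moreover arrange $T$ to contain every unit, so that $T^x\neq\varnothing$ for a.e.\ $x$.

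It remains to replace the bundle base $(Y_0^x)_x$ by a single probability space, which is where ergodicity enters and which I expect to be the main obstacle. Each fiber $Y_0^x$ is a countable product of copies of $(X_0,\mu_0)$, and right translation on $G$ intertwines the left $H$-action, so the measure-isomorphism type of $Y_0^x$ is governed by an $H$-invariant function of $x$ (morally the number of $H$-orbits lying over $x$); by ergodicity of $H$ this is almost everywhere constant. Hence a.e.\ fiber is isomorphic to one fixed standard probability space $Z_0$ (for instance $Z_0\cong[0,1]$ whenever $X_0$ is nonatomic, since then every nonempty such product is nonatomic standard), and a measurable selection of isomorphisms $Y_0^x\cong Z_0$ conjugates the shift to the honest Bernoulli shift $H\acts Z_0^H$. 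The delicate points are the measurable trivialization of the base bundle and the verification that the relevant fiber invariant is genuinely $H$-invariant and a.e.\ constant; the transversal and source/range bookkeeping, including the role of $H$-isotropy, must be handled with care.
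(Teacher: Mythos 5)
Your overall strategy is the same as the paper's: re-index the coordinate set $G^e$ by left $H$-translates of a system of orbit representatives, so that $X_0^{G^e}$ becomes a product over $H^e$ of "blocks". The transversal bookkeeping and the equivariance check are fine. However, the step you yourself flag as the main obstacle --- trivializing the base bundle $(Y_0^x)_x$ with $Y_0^x=X_0^{T^x}$ --- contains a genuine gap. You assert that the isomorphism type of $Y_0^x$ is governed by an $H$-invariant function of $x$ ("morally the number of $H$-orbits lying over $x$") and then invoke ergodicity. But for an arbitrary Borel transversal $T$ the relevant quantity is $|T^x|$, the number of orbit \emph{representatives} whose range is $x$, and this is not $H$-invariant: each left $H$-orbit $\mathcal{O}$ meets $G^{x'}$ for \emph{every} $x'$ in the $RH$-class of $\r(\mathcal{O})$, yet contributes its single representative to only one such $x'$. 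The genuinely invariant quantity is $\sum_{x'\in RH[x]}|T^{x'}|$ (the number of orbits with range-class $RH[x]$), not $|T^x|$ itself; a badly chosen transversal can pile all representatives over a single point of each class. Since for atomic $X_0$ the isomorphism type of $X_0^{T^x}$ really does depend on $|T^x|$, ergodicity alone does not rescue the argument, and the lemma is stated (and used) for a general base $(X_0,\mu_0)$.

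The repair is exactly the nontrivial measurable construction in the paper's proof: one must \emph{build} the decomposition so that the inner index set is a fixed countable set from the start, rather than take an arbitrary transversal and hope. The paper does this by combining a section $s$ of $(\s,\r)$ adapted to $H$, measurable coset representatives $g_j$ for $G(e)/H(e)$ (with full domains, which is where ergodicity is actually used), and a Feldman--Sutherland--Zimmer choice sequence $(\p_i)$ enumerating the $RH$-classes inside each $RG$-class; this yields $G^e\cong H^e\times(I\times J)$ uniformly in $e$, hence a Bernoulli base $X_0^{I\times J}$. Your argument does go through when $\mu_0$ is diffuse (every nonempty countable power of $X_0$ is then a nonatomic standard probability space, so all fibers are isomorphic to $[0,1]$ and a measurable field of trivializations exists), which covers the instance needed in the proof of Theorem \ref{T - FPF}; but as a proof of the lemma as stated it is incomplete, and the missing content is precisely the even redistribution of orbit representatives that the FSZ machinery provides.
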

\begin{proof}
By the von Neumann selection theorem, we can find a measurable section $s\colon RG\to G$ of $(\s,\r)\colon G\rightrightarrows RG$ (=the pmp equivalence relation associated with $G$) such that that $s(RG^0)=G^0$ and $s(RH)\subset H$ and, since the set $G(e)/H(e)$ is countable for $e\in G^0$, measurable sections $(g_j \colon D_j\subset G^0\to SG)_{j\in J}$ of $SG\to G^0$ such that for ae $e\in G^0$, $G(e)=\bigsqcup_{j\in J} H(e)g_{j}(e)$. By ergodicity we may assume $\h(D_j)=1$.
Let $(\p_i)_{i\in I}$ be a sequence in $\Aut(G^0)$ such that $\{RH[\p_ie]\}_{i\in I}$ form a partition of $RG[e]$ for  ae $e\in G^0$ (see \cite{FSZ}). 
 Then 
\[
G^e=\bigsqcup_{i\in I,\, j\in J} \bigsqcup_{(e,f)\in RH}H(e)g_j(e)s(e,\p_i^{-1}f)\\
\]
since any $g\in G^e$ can be written uniquely in the form $h_0g_j(e)s(e,\p_i^{-1}f)$ for $i$ and $f$ such that  $(e,f)\in RH$ and $f=\p_i\s(g)$ so $gs(e,\p_i^{-1}f)^{-1}\in G(e)$ and $h_0=gs(e,\p_i^{-1}f)^{-1}g_j(e)^{-1}\in H(e)$.

Consider the measurable field of maps $\psi_e:X_{0}^{G^{e}}\rightarrow(X_{0}^{I\times J})^{H^{e}}$
defined by sending $x\in X_{0}^{G^{e}}$ to 
\[
\left((x_{h_0g_j(e)s(e,\p_i^{-1}f)})_{(i,j)\in I\times J}\right)_{h_0\in H(e), (e,f)\in RH}.
\]
These maps are measure preserving and if we consider the Bernoulli action
of $H$ with base $X_{0}^{I\times J}$ then we see it is $H$-equivariant:
for $h\in H^e$ say $h=h_1^{-1}s(d,e)^{-1}$ where $h_1\in H(e)$
\begin{align*}
  \psi_d(h(x))&=\psi(h(x_{h_0g_j(e)s(e,\p_i^{-1}f)})_{(i,j)\in I\times J,h_0\in H(e),(e,f)\in RH})\\&=\psi((x_{s(d,e)h_1h_0g_j(e)s(e,\p_i^{-1}f)})_{(i,j)\in I\times J,h_0\in H(e),(e,f)\in RH})\\
 & =\left((x_{s(d,e)h_1h_0g_j(e)s(e,\p_i^{-1}f)})_{(i,j)\in I\times J}\right)_{h_0\in H(e),(e,f)\in RH}\\
 &=h\left((x_{h_0g_j(e)s(e,\p_i^{-1}f)})_{(i,j)\in I\times J}\right)_{h_0\in H(e),(e,f)\in RH}\\
 &=h(\psi_e(x))
\end{align*}
\end{proof}

We say a groupoid action $G\acts X$ is \emph{essentially free}
 if for ae $s\in G\setminus G^0$ 
\[
\mu^{\s(s)}(\Fix(s))=0
\]
where 
\[
\Fix(s)=\{x\in X^{\s(s)}\mid sx=x\}.
\]

\begin{lemma}\label{L - free groupoid actions are equivalence relations}
If the pmp groupoid action $G\acts X$ is essentially free then $G\ltimes X$
is a pmp equivalence relation.
\end{lemma}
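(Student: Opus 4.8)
The plan is to show that essential freeness forces the crossed product groupoid $G\ltimes X = G\times_{G^0} X$ to be \emph{principal}, i.e.\ its isotropy is trivial almost everywhere, since a principal discrete pmp groupoid is by definition a pmp equivalence relation (the range--source map $(\r,\s)\colon G\ltimes X\to X\times X$ is then injective on fibers and identifies $G\ltimes X$ with a measurable subset of $X\times X$ carrying the relation structure). Recall the groupoid law is $(s,x)(t,y)=(st,y)$ whenever $t$ acts so that its action sends $y$ to $x$; an element $(s,x)$ is a unit precisely when $s\in G^0$ and it lies in the isotropy at $x$ precisely when $sx=x$. So the content is exactly that $\{(s,x): sx=x,\ s\notin G^0\}$ is $\mu$-null in $G\ltimes X$.

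The key step is to translate the measure-theoretic hypothesis into a statement about the Haar measure of the crossed product. First I would fix the description of the invariant measure on $G\ltimes X$: its Haar system is obtained by pairing the Haar measure $\h$ on $G$ with the fibered measures $(\mu^e)_{e\in G^0}$, so that integration over $G\ltimes X$ amounts to $\int_{G^0}\int_{G^e}(\cdots)\,\d\mu^e\,\d\h(e)$ after unfolding the fibers (this is the measure $\mu=\int_{G^0}\mu^e\,\d\h(e)$ combined with the counting-type Haar measure on $G$ in the groupoid direction). Next I would compute the measure of the bad set. Writing $B:=\{(s,x)\in G\ltimes X : s\notin G^0,\ sx=x\}$ and integrating the fiberwise contribution, the hypothesis $\mu^{\s(s)}(\Fix(s))=0$ for a.e.\ $s\in G\setminus G^0$ says exactly that for almost every $s$ the set of $x$ with $sx=x$ is $\mu^{\s(s)}$-null; integrating this vanishing quantity against $\h$ (using that $G\setminus G^0$ is where the obstruction lives) gives that $B$ is null. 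Hence the isotropy of $G\ltimes X$ is trivial almost everywhere.

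Finally I would conclude that, after discarding the null set $B$, the map $(\r,\s)\colon G\ltimes X\to X\times X$ is a.e.\ injective on each source-fiber, so $G\ltimes X$ is (isomorphic to) its image, a measurable subset of $X\times X$ that is reflexive, symmetric and transitive by the groupoid axioms, i.e.\ a pmp equivalence relation. The invariance of $\mu$ under the full inverse semigroup carries over from the groupoid action, so the identification is as pmp groupoids and $G\ltimes X$ is genuinely a pmp equivalence relation.

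The main obstacle I anticipate is purely measure-theoretic bookkeeping: making precise the Haar measure on the fibered crossed product and justifying the Fubini-type interchange that lets the fiberwise null sets $\Fix(s)$ assemble into a global null set $B$. One has to be careful that ``for a.e.\ $s\in G\setminus G^0$'' is with respect to $\h$ restricted to $G\setminus G^0$ and that this is compatible with the disintegration of the measure on $G\ltimes X$; once the measurability of $(s,x)\mapsto \mathbf 1_{\{sx=x\}}$ and the Fubini step are in hand, the vanishing of $\h(G^0$-complement isotropy$)$ is immediate and the rest is formal.
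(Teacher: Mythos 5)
The proposal is correct and follows essentially the same route as the paper: both use essential freeness together with the countability of the fibers $G_e$ to show that the isotropy of $G\ltimes X$ is trivial off a null set, and then identify the resulting principal pmp groupoid with an equivalence relation. The only (cosmetic) difference is that the paper carries out the null-set bookkeeping you flag as the ``main obstacle'' by restricting to a conull $G$-invariant sub-bundle $X_0^A$ of the base on which the action is genuinely free, rather than discarding the null set $B$ of isotropy arrows directly --- which is the cleaner formulation, since the complement of $B$ by itself need not be closed under composition.
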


\begin{proof}
Since $G\acts X$ is essentially free and $G_e$ is countable the set 
\[
X_0^e:=\{x\in X^e\mid sx\neq x,\ \forall s\in G_e,\ s\neq e\}
\] 
has measure 1  in $X^e$ for every $e\in A\subset G^0$ a measurable subset with $\h(A)=1$. Let $(X_0^A,\mu)\to (A,\h)$ be the measure fibration corresponding to $(X_0^e)_{e\in A}$.  Since $G$ is isomorphic to $G_{|A}\ltimes X_0^A$, we may assume that $sx\neq x$ for all $s\in G\setminus G^0$, $x\in X^{\s(s)}$.
Then for $(s,x)\in G\ltimes X$
\[
\r(s,x)=\s(s,x)\ssi x\in \Fix(s)
\]
so $\r(s,x)\neq\s(s,x)$ for every $s\in G\setminus G^0$ and $x\in X$. This shows that $G\ltimes X$ is an equivalence relation. It is an easy exercise to check that it is pmp (more generally if $G$ is pmp and $G\acts X$ is a pmp action, then $G\ltimes X$ is a pmp groupoid).  
\end{proof}

\begin{lemma}\label{lemma B0 def}
If $G$ is transversally finitely generated then so is $G\ltimes X$ for any ergodic pmp  action  $G\acts X$. 
\end{lemma}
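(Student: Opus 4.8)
The plan is to transport a finite transversal generating set of $G$ to $G\ltimes X$ and then enlarge it by finitely many elements accounting for the extra fibre directions; the construction of these extra elements is where ergodicity enters.

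First I would fix a finite transversal generating set $K=\{s_1,\dots,s_k\}\subset\bb G$ and push it forward. Each $s\in\bb G$ determines an element $\tilde s\in\bb{G\ltimes X}$, the partial bisection $x\mapsto s_{p(x)}x$ with domain $p^{-1}(\dom s)$, where $s_e\in s$ is the unique element with $\s(s_e)=e$. One checks that $s\mapsto\tilde s$ is an injective homomorphism of inverse monoids and that it is $1$-Lipschitz for the uniform distances, since $\{x:\tilde sx\neq\tilde tx\}\subset p^{-1}\{e:s_e\neq t_e\}$ and $p_*\mu=\h$, so that $|\tilde s-\tilde t|\leq|s-t|$. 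Consequently the products of elements of $\tilde K$ are uniformly dense in the image $\widetilde{\bb G}$; in particular they approximate every $\tilde t$ with $t\in\bb G$, as well as every idempotent $e_{p^{-1}(A)}$ with $A\subset G^0$.

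Next I would isolate what is still missing. Passing from $G$ to $G\ltimes X$ enlarges the unit space from $G^0$ to the fibred space $X$, so the two features not yet reached are (a) the idempotents $e_B$ (restriction of the unit section to a Borel set $B\subset X$) for $B$ not pulled back from $G^0$, and (b) the partial bisections $x\mapsto g(x)x$ whose $G$-component genuinely depends on the fibre point and not only on its image $p(x)$. I would reduce everything to (a): once all idempotents $e_B$ are uniformly approximable by products of the generators, an arbitrary partial bisection of $G\ltimes X$ can be written as an idempotent times a full bisection, and full bisections are approximated by products of $\tilde K$ together with the generators furnishing (a), whence all partial bisections are reached by cutting with idempotents and composing.

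The heart of the matter, and the step where the hypothesis is used, is to generate the fibred measure algebra of $X$ with \emph{finitely many} elements. A fibrewise Borel isomorphism $X^e\cong[0,1]$ produces a countable family of idempotents generating this algebra over $G^0$, and the difficulty is to replace it by a finite one. This is exactly where ergodicity of $G\acts X$ is indispensable: it makes $G\ltimes X$ ergodic (an invariant $B\subset X$ has $\mu(B)\in\{0,1\}$), so the dynamics can spread a single suitably coded ``seed'' idempotent across the whole unit space and recover the countable family up to arbitrarily small uniform error, in the spirit of the standard argument that the full group of an ergodic relation is topologically finitely generated. That ergodicity cannot be dropped is visible from the trivial action of a point on a non-atomic $(X,\mu)$, whose crossed product is the diagonal $\Delta_X$: there $\bb{G\ltimes X}$ is the whole measure algebra, which is not transversally finitely generated. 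Adjoining this finite coding family to $\tilde K$ yields the desired finite transversal generating set. I expect this finiteness step to be the main obstacle; by contrast, the possible non-freeness of the action, which makes $G\ltimes X$ a genuine groupoid with isotropy rather than an equivalence relation, causes no trouble, since the isotropy is already carried along by the homomorphism $s\mapsto\tilde s$.
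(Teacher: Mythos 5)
Your proposal is correct and follows essentially the same route as the paper: keep a finite transversal generating set of $G$ and adjoin finitely many elements that transversally generate the measure algebra of $X$, using ergodicity for this last step. The paper simply makes your ``seed idempotent spread by the dynamics'' explicit via Dye's theorem, taking an ergodic automorphism $\theta\in\bb R$ of the orbit relation, orbit equivalent to the Bernoulli shift $\IZ\acts\{0,1\}^{\IZ}$, and adjoining $\theta$ together with the two projections corresponding to the cylinder sets $\{x_0=0\}$ and $\{x_0=1\}$.
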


\begin{proof}
Let $R$ be the orbit equivalence relation of $G\acts X$. Since $R$ is ergodic, there exists an ergodic automorphism $\theta\in \bb R$, which is orbit equivalent to a Bernoulli shift $\IZ\acts \{0,1\}^\IZ$ (\cite{Dye}). For $i\in\{0,1\}$ let $B_i$ be the cylinder set $\{x\in\{0,1\}^\mathbb{Z}\mid x_0=i\}$. Let $p_1,p_2$ be the projection in $L^\infty(X)$ corresponding to $B_0$ and $B_1$ in $\IZ\acts \{0,1\}^\IZ$.
If $F\subset \bb G$ be a finite transversally generating set for $G$, then $F\cup \{\theta,p_1,p_2\}$ is a finite generating set for $G\ltimes X$.
\end{proof}

\begin{lemma}
The Bernoulli action $G\acts X_0^G$ is essentially free
if $G$ has infinite fibers  (i.e.\ $|G^e|=\infty$ for ae $e\in G^{0}$), and the support of $\mu_0$ contains at least two points. If in addition $\mu_0$ is diffuse then the action is essentially free.
\end{lemma}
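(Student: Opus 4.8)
The plan is to reduce essential freeness to a fixed-point computation for the isotropy elements of $G$, and then to an elementary independence estimate on the product space $X_0^{G^e}$. First I would dispose of the non-isotropy elements: if $\s(s)\neq\r(s)$ then $sx\in X^{\r(s)}$ and $x\in X^{\s(s)}$ lie in different fibres of $p\colon X_0^G\to G^0$, so $sx\neq x$ and $\Fix(s)=\emptyset$. Thus it is enough to fix $e\in G^0$, take $s\in G(e)$ with $s\neq e$, and prove $\mu^e(\Fix(s))=0$ for $\mu^e=\mu_0^{\otimes G^e}$. Unwinding the definition of the Bernoulli action, the relation $sx=x$ reads $x_{s^{-1}t}=x_t$ for all $t\in G^e$; that is, $x=(x_t)_{t\in G^e}$ is constant along the orbits of the left-multiplication map $\lambda_s\colon t\mapsto st$ on $G^e$.

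Next I would record the orbit structure of $\lambda_s$. Since $s\in G(e)$, the map $\lambda_s$ generates a genuine action of the cyclic group $\langle s\rangle$ on $G^e$, and this action is free because $s^k t=t$ forces $s^k=e$ (multiply on the right by $t^{-1}$). Hence every $\langle s\rangle$-orbit has cardinality $\mathrm{ord}(s)\ge 2$, and the infinite-fibres hypothesis $|G^e|=\infty$ guarantees that there are infinitely many orbits (when $\mathrm{ord}(s)<\infty$) or at least one infinite orbit (when $\mathrm{ord}(s)=\infty$).

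Finally comes the measure estimate. The coordinates $(x_t)_{t\in G^e}$ are i.i.d.\ with law $\mu_0$, so for a single orbit $O$ one has $\mu^e\{x:x|_O\text{ is constant}\}=\sum_a\mu_0(\{a\})^{|O|}$, read as $0$ when $|O|=\infty$. Writing $c:=\sup_a\mu_0(\{a\})$, the assumption that the support of $\mu_0$ contains at least two points gives $c<1$, whence this single-orbit probability is at most $c<1$ for every orbit of size $\ge 2$. The events ``$x|_O$ constant'' for distinct orbits $O$ depend on disjoint blocks of coordinates, hence are independent, and $\Fix(s)$ is their intersection; since there are infinitely many such events each of probability $\le c<1$, we conclude $\mu^e(\Fix(s))=0$. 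This holds for all $s\neq e$, giving essential freeness. For the diffuse case I would simply note $c=0$, so already a single orbit of size $\ge 2$ has constancy-probability $0$; as $s\neq e$ forces all orbits to have size $\ge 2$, $\Fix(s)$ is null at once, with no appeal to infinitude of fibres.

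The step I expect to require the most care is the finite-order case of the last paragraph: when $\mathrm{ord}(s)<\infty$ and $\mu_0$ has atoms, the fixed-point condition on any one orbit is not null, so one cannot argue orbit by orbit. The conclusion genuinely needs the combination of the strictly sub-unit single-orbit probability with independence across the infinitely many orbits --- precisely the place where the infinite-fibres hypothesis is indispensable, and which the diffuse hypothesis is designed to circumvent.
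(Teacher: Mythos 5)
Your proof is correct and follows essentially the same route as the paper's: both arguments reduce to the orbit structure of $t\mapsto st$ on $G^e$ and extract infinitely many independent equality constraints among the coordinates, each of probability bounded away from $1$, so that $\Fix(s)$ is null. Your orbit-by-orbit bookkeeping is simply a more explicit version of the paper's choice of an infinite family of pairwise disjoint pairs $\{s_i,t_i\}$ with $x_{s_i}=x_{t_i}$ on $\Fix(s)$, and your treatment of the diffuse case (one pair suffices, no infinitude of fibers needed) matches the paper's.
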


\begin{proof}
Let $s\in G^e$, $s\neq e$, such that $|G^{e}|=\infty$. We show that $\mu^{e}(\Fix(s))=0$.
If $\mathbf{r}(s)\neq \mathbf{s}(s)$ this is clear so we assume $s\in G(e)$. Note that $(x_t)_{t\in G^e}$ is $\langle s\rangle$-invariant if and only if $x_{s^{n}t}=x_t$ for all $n\in \IZ$, $t\in G^e$. Thus we can find an infinite family of pairwise disjoint pairs $\{s_i,t_i\}_{i\in I}$, $s_i\neq t_i\in G^e$, such that $x_{s_i}=x_{t_i}$ for every $x\in \Fix(s)$ and $i\in I$.
Since $|I|=\infty$, the set of $x\in X_0^{G^e}$ such that $x_{s_i}=x_{t_i}$ is negligible, so $\mu^e(\Fix(s))=0$. If in addition $\mu_0$ is diffuse then the set of $x\in X_0^{G^e}$ such that $x_{s}=x_{e}$ is negligible  so $\mu^{e}(\Fix(s))=0$ in this case.
\end{proof}

\begin{lemma}
If G is ergodic with infinite fibers then the Bernoulli action $G\acts X_0^G$ is ergodic for any base space $(X_0,\mu_0)$. 
\end{lemma}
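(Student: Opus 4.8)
The plan is to prove ergodicity in the equivalent form that every $G$-invariant $F\in L^2(X_0^G,\mu)$ is constant a.e.; applying this to indicator functions then shows every invariant measurable set is null or conull. I would work fiberwise through the direct integral $L^2(X_0^G,\mu)=\int_{G^0}^\oplus L^2(X_0^{G^e},\mu_0^{\otimes G^e})\,\d\h(e)$ and use the Wiener chaos (Fourier) decomposition on each Bernoulli fiber: fixing an orthonormal basis $1,\xi_1,\xi_2,\dots$ of $L^2(X_0,\mu_0)$ with $\xi_i\in L^2_0$, the space $L^2(X_0^{G^e})$ has an orthonormal basis indexed by finitely supported functions $\alpha\colon G^e\to\IZ_{\geq 0}$, and one computes directly from the formula $s\cdot(x_t)_t=(x_{s^{-1}t})_t$ that the action carries the basis vector of support $S\subset G^{\s(s)}$ isometrically onto the basis vector of support $sS\subset G^{\r(s)}$ of the same type. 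In particular the action preserves the grading by $|S|$, so the projection $P_n$ onto the $n$-th homogeneous (``$n$-particle'') piece commutes with the action and each $P_nF$ is again $G$-invariant. It therefore suffices to treat each homogeneous degree separately.

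The degree $0$ part $P_0F=(c_e)_e$, with $c_e=\int F_e\,\d\mu_0^{\otimes G^e}$, transforms through the action on constants, so invariance says $c$ is a $G$-invariant (equivalently $RG$-invariant) function on $G^0$; by ergodicity of $G$ it is constant. For $n\geq 1$ I would write $F_e=\sum_{S\in\binom{G^e}{n}}F_{e,S}$ with $\|F_e\|^2=\sum_S\|F_{e,S}\|^2$, and read off from $G$-invariance the equivariance of masses $\|F_{\r(s),sS}\|=\|F_{\s(s),S}\|$ under $S\mapsto sS$. The goal is to show this forces $F_e=0$ for a.e.\ $e$.

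The key step, and the one I expect to be the main obstacle, is a mass-transport argument exploiting the infinite fibers. Define $w\colon G\to[0,\infty]$ by $w(g):=\sum_{S\in\binom{G^{\r(g)}}{n},\,g\in S}\|F_{\r(g),S}\|^2$. Summing over each range fiber gives $\sum_{g\in G^e}w(g)=n\|F_e\|^2$, hence $\int_G w\,\d\h=n\|F\|^2<\infty$. On the other hand, translating a subset $S\ni g$ by $g^{-1}$ produces a subset of $G^{\s(g)}$ containing the unit $\s(g)$, and equivariance of masses gives $w(g)=\Phi(\s(g))$, where $\Phi(e):=\sum_{S\ni e}\|F_{e,S}\|^2$. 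Now I would invoke the pmp identity $\h=\h^{-1}$ together with the fact that inversion maps $G^e$ bijectively onto $G_e$, so $|G_e|=|G^e|=\infty$ a.e.; the mass-transport principle $\int_{G^0}\sum_{g\in G^e}w\,\d\h=\int_{G^0}\sum_{g\in G_e}w\,\d\h$ then reads $n\|F\|^2=\int_{G^0}|G_e|\,\Phi(e)\,\d\h(e)$, which is finite only if $\Phi=0$ a.e. Finally, since every $n$-subset $S\subset G^e$ can be translated to contain a unit, $\Phi\circ\s=0$ $\h$-a.e.\ forces $\|F_e\|^2=\tfrac1n\sum_{t\in G^e}\Phi(\s(t))=0$ for a.e.\ $e$, so $P_nF=0$ for all $n\geq 1$ and $F$ is constant.

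The genuine difficulties are bookkeeping rather than conceptual: making the chaos decomposition and the maps $S\mapsto sS$ measurable in the direct-integral setting (a measurable-selection/Fubini matter), and confirming both that $|G_e|=\infty$ a.e.\ and that $\s$-preimages of $\h$-null sets are $\h$-null. An alternative route would reduce to a base case via Lemma~\ref{L - subshifts are Bernoulli}: choose an ergodic hyperfinite subgroupoid $H\subset G$ with $H^0=G^0$ (as in Lemma~\ref{lemma B0 def}), so that $H\acts X_0^G$ is a Bernoulli shift over $H$, and deduce ergodicity of $G\acts X_0^G$ from ergodicity of this $H$-Bernoulli shift; but this still needs an independent ergodicity input for the hyperfinite (essentially $\IZ$) case, so I would favour the self-contained mass-transport argument above.
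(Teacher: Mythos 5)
Your argument is correct, but it takes a genuinely different route from the paper. The paper's proof is the classical measure-theoretic one: given a nonzero invariant set $A$, the fiberwise measure $\delta=\mu^e(A^e)$ is a.e.\ constant by ergodicity of $G$; one approximates $A^e$ by cylinder sets $A_{|F^e}$ over finite subsets of the fibers, uses the infinite fibers (via the von Neumann selection theorem) to produce a section $s$ with $s_eF^e\cap F^{\r(s_e)}=\emptyset$, and then independence of the Bernoulli measure over disjoint coordinates gives $\delta-2\e\leq\mu(A_{|F^{\r(s_e)}}\cap A_{|s_eF^e})=\mu(A_{|F^{\r(s_e)}})\mu(A_{|s_eF^e})\leq\delta^2+O(\e)$, whence $\delta^2\geq\delta$ and $\delta=1$. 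Your chaos-decomposition-plus-mass-transport argument replaces the cylinder approximation and the $\e$-bookkeeping by the exact identity $n\|F\|^2=\int_{G^0}|G_e|\,\Phi(e)\,\d\h(e)$, which isolates very cleanly where infinite fibers enter (finiteness of the left side forces $\Phi=0$); the computation $w(g)=\Phi(\s(g))$ and the use of $\h=\h^{-1}$ are both correct, as is the final passage from $\Phi=0$ $\mu$-a.e.\ to $\Phi\circ\s=0$ $\h$-a.e.\ (since $\h(\s^{-1}N)=\int_N|G_e|\,\d\mu(e)=0$ for $\mu(N)=0$). The price you pay is the extra overhead of setting up a measurable field of chaos decompositions and of enumerations of the fibers in the direct integral, which you correctly identify as a selection-theorem matter; the paper's route needs the same selection theorem but only to produce one translating section, and is shorter and more elementary. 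Your observation that the reduction through Lemma~\ref{L - subshifts are Bernoulli} does not by itself close the argument is also correct, and your decision to give a self-contained proof instead is the right one.
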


\begin{proof}
If $A\subset X_0^G$ is a nonzero $G$-invariant subset and $p\colon X_0^G\to G^0$, then $p(A)$ is $G$-invariant and nonzero therefore $\delta:=\mu^e(A^e)\neq 0$ and is almost surely constant by the ergodicity of $G$. Let $\e>0$ be arbitrary. Since $|G^e|=\infty$ almost surely we can find by the von Neumann selection theorem a nonzero measure field $(F^e)_{e\in B}$ of finite subsets of $(G^e)_{e\in B}$ such that $\mu^e(A^e\cap A_{|F^e})>\delta -\e$ and a section $s$ of $\s$ such that  $s_eF^{e}\cap F^{\r(s_e)}=\emptyset$ for ae $e\in B$. By invariance $\mu^{\r(e)}(A^{\r(s_e)}\cap A_{|s_eF^e})>\delta-\e$ so
\[
\mu^{\r(e)}(A_{|F^{\r(s_e)}}\cap A_{|s_eF^e})>\delta-2\e
\]
and $\mu^{\r(e)}(A_{|F^{\r(s_e)}}\cap A_{|s_eF^e})=\mu^{\r(e)}(A_{|F^{\r(s_e)}})\mu^{\r(e)}(A_{|s_eF^e})$ also 
\[
\mu^{\r(e)}(A_{|F^{\r(s_e)}}\cap A_{|s_eF^e})<\delta^2+6\e\delta + 2 \e^2.
\]
Letting $\e\to0$, we get $\delta^2\geq \delta$ so $\delta\geq 1$ and $\mu(A)=1$.
 \end{proof}

\begin{lemma}\label{L - free product decompositions actions}
Let $G\acts X$ be a groupoid action and suppose that $G=G_{1}*_{G_{3}}G_{2}$. Then 
\[
G\ltimes X\simeq(G_{1}\ltimes X_{|1})*_{(G_{3}\ltimes X_{|3})}(G_{2}\ltimes X_{|2})
\]
where $X_{|i}=p^{-1}(G_{i}^{0})$. 
\end{lemma}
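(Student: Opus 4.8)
The plan is to verify directly that $G\ltimes X$, equipped with the subgroupoids $G_1\ltimes X_{|1}$ and $G_2\ltimes X_{|2}$ amalgamated over $G_3\ltimes X_{|3}$, satisfies the defining properties of an amalgamated free product, exploiting the fact that in a crossed product the $X$-coordinate is transported passively and so contributes nothing to the word combinatorics. Recall that $G=G_1*_{G_3}G_2$ means that $G_1,G_2$ generate $G$, that $G_1\cap G_2=G_3$, and that every reduced word with letters alternately in $G_1\setminus G_3$ and $G_2\setminus G_3$ fails to lie in $G_3$. First I would record the elementary compatibilities. Since $G_i\subset G$ and $X_{|i}=p^{-1}(G_i^0)$, the set $\{(g,x)\mid g\in G_i,\ x\in X^{\s(g)}\}$ is a subgroupoid of $G\ltimes X$ canonically identified with $G_i\ltimes X_{|i}$. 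Because $G_1\cap G_2=G_3$ and $X_{|1}\cap X_{|2}=p^{-1}(G_1^0\cap G_2^0)=p^{-1}(G_3^0)=X_{|3}$, one checks that $(G_1\ltimes X_{|1})\cap(G_2\ltimes X_{|2})=G_3\ltimes X_{|3}$, and that $G_3\ltimes X_{|3}$ sits inside each $G_i\ltimes X_{|i}$.

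The main step is a telescoping identity. If $g\in G$ factors as $g=a_1a_2\cdots a_n$ with $a_k\in G_{i_k}$, then for $x\in X^{\s(g)}$ the groupoid law $(s,x)(t,y)=(st,y)$ (valid when $ty=x$) yields
\[
(g,x)=(a_1,a_2\cdots a_n x)(a_2,a_3\cdots a_n x)\cdots(a_{n-1},a_n x)(a_n,x),
\]
where each factor $(a_k,a_{k+1}\cdots a_n x)$ lies in $G_{i_k}\ltimes X_{|i_k}$: the intermediate point $a_{k+1}\cdots a_n x$ has image $\r(a_{k+1}\cdots a_n)=\s(a_k)\in G_{i_k}^0$ under $p$, so it lies in the correct fibre, and the composability conditions hold by construction. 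Applying this to a factorisation of $g$ coming from the generation of $G$ by $G_1\cup G_2$ shows that $G_1\ltimes X_{|1}$ and $G_2\ltimes X_{|2}$ generate $G\ltimes X$, transferring the generation half of the free-product property. Measurability of the factorisation in $(g,x)$ follows from measurable selection of coset representatives exactly as in the proof of Lemma \ref{L - subshifts are Bernoulli}.

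It remains to transfer the freeness half. The key point is that, for a letter $a_k\in G_{i_k}$, one has $(a_k,x_k)\in G_3\ltimes X_{|3}$ if and only if $a_k\in G_3$: membership is detected on the first coordinate alone, since $a_k\in G_3$ forces $\s(a_k)\in G_3^0$ and hence $x_k\in X_{|3}$ automatically. Consequently a reduced word $(a_1,x_1)(a_2,x_2)\cdots(a_n,x_n)$ in $G\ltimes X$, with letters alternately in $(G_1\ltimes X_{|1})\setminus(G_3\ltimes X_{|3})$ and $(G_2\ltimes X_{|2})\setminus(G_3\ltimes X_{|3})$, has all $a_k\in G_{i_k}\setminus G_3$ with $i_k$ alternating; composability forces $x_k=a_{k+1}\cdots a_n x_n$, and the telescoping identity computes the product as $(a_1a_2\cdots a_n,x_n)$ with $a_1\cdots a_n$ a reduced word in $G=G_1*_{G_3}G_2$. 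Freeness of the latter gives $a_1\cdots a_n\notin G_3$, whence $(a_1\cdots a_n,x_n)\notin G_3\ltimes X_{|3}$, and in particular it is not a unit. This is precisely the freeness condition for $G\ltimes X$, completing the identification.

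The argument is thus essentially formal: the assignment $-\ltimes X$ carries the free-product decomposition of $G$ to that of $G\ltimes X$ because the action coordinate plays no part in the reduction of words. The only genuine difficulty I anticipate is measure-theoretic bookkeeping, namely arranging the reduced-word factorisation to depend measurably on $(g,x)$ (handled by von Neumann selection as above) and checking that the fibration $p$ interacts with the restrictions $X_{|i}$ so that the intersection, composability, and fibre claims hold almost everywhere rather than merely pointwise.
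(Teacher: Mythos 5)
Your proof is correct, and it rests on the same underlying observation as the paper's --- that in the crossed product law $(s,x)(t,y)=(st,y)$ the action coordinate is transported passively, so all word combinatorics happen in the first coordinate --- but you verify a different characterization of the amalgamated free product. The paper checks the \emph{universal property}: given morphisms $f_i\colon G_i\ltimes X_{|i}\to H$ agreeing on $G_3\ltimes X_{|3}$, it builds the unique extension $k\colon G\ltimes X\to H$, with uniqueness from generation and well-definedness from the fact that a relation $g_1\cdots g_n=\mathrm{Id}$ in $G\ltimes X$ projects to a relation $g_1'\cdots g_n'=e$ in $G=G_1*_{G_3}G_2$. You instead check the \emph{internal normal-form} characterization: generation via the telescoping identity, plus the fact that a reduced alternating word in the crossed product projects to a reduced alternating word in $G$ (using that membership in $G_3\ltimes X_{|3}$ is detected on the first coordinate alone), hence cannot land in the amalgam. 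The two characterizations are equivalent, and the key mechanism --- projection to the first coordinate preserves reducedness/relations --- is identical; your version makes the normal-form structure explicit, which is arguably more informative, while the paper's version is shorter and avoids having to state what ``reduced word'' means for groupoids. Your closing remarks about measurable selection and almost-everywhere qualifications address the only point where either argument is being informal, and the paper is no more careful there than you are.
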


\begin{proof}
First note that $G_{i}$ acts on $X_{i}$ and
that $G_{i}\ltimes X_{|i}$ is naturally a subgroupoid of $G\ltimes X$.
Given an arbitrary groupoid $H$ and groupoid morphisms $f_{1}:G_{1}\ltimes X_{|1}\rightarrow H$,
$f_{2}:G_{2}\ltimes X_{|2}\rightarrow H$ with $f_{1}\big|_{G_{3}\ltimes X_{3}}=f_{2}\big|_{G_{3}\ltimes X_{|3}}$
we want to show there is a unique morphism $k\colon G\ltimes X\rightarrow H$
such that the following diagram commutes (where the unlabeled edges
are the inclusion map).
\begin{center}
\begin{tikzpicture}
    \matrix (m) [
      matrix of math nodes,
      row sep=1.5cm,
      column sep=.7cm,
      text height=1.5ex,
      text depth=0.25ex
    ] {
            & {G_3}\ltimes X_{|3} &           \\
      G_{1}\ltimes X_{|1} &  G\ltimes X    & G_{2}\ltimes X_{|2} \\
			& H & \\			
    };
    \path[->]        (m-1-2) edge node[left] {}(m-2-1)
                     (m-1-2) edge node[right] {} (m-2-3)
                     (m-2-1) edge node[left] {}  (m-2-2)
					(m-2-3) edge node[left] {}  (m-2-2)
					(m-2-1) edge node[left] {$f_1$}  (m-3-2)
					(m-2-3) edge node[right] {$f_2$}  (m-3-2)
					(m-2-2) edge node[left] {$k$}  (m-3-2);
\end{tikzpicture}
\end{center}
Since $k\big|_{G_{1}\ltimes X_{|1}}=f_{1}\big|_{G_{1}\ltimes X_{|1}}$, $k\big|_{G_{2}\ltimes X_{|2}}=f_{2}\big|_{G_{2}\ltimes X_{|2}}$
and the values of $k$ are determined on $G_{1}\ltimes X_{|1}$
and $G_{2}\ltimes X_{|2}$ which generate $G\ltimes X$, this
gives uniqueness. 
To
show it is well defined note that if $g_{i}\in G_{1}\ltimes X^{G_{1}}\cup G_{2}\ltimes X^{G_{2}}$
and $g_{1}...g_{n}=\mathrm{Id}_{\mathbf{s}(g_{1})}$ and if $g_{i}'$ are the
corresponding elements of $G_{1}$ and $G_{2}$ then $g_{1}'...g_{n}'=e$,
so using the fact that $G_1$ and $G_2$ are in free product over $G_3$  this shows that $k(g_{1})...k(g_{n})=\mathrm{Id}_{\mathbf{s}(f_{1}(g_{1}))}$
or $\mathrm{Id}_{\mathbf{s}(f_{2}(g_{1}))}$ as appropriate.
\end{proof}

\section{Overlapping generators}\label{S - overlapping gen}

Let $F\subset \bb G$ be a finite subset, $\pi$ be a partition of $F$, and $\sigma:F\to \bb d$ be a map. 
Denote $F_{|\pi}$ the set of $e\in G^0$ such that
\[
e \leq \prod_{s\in F} ss^{-1}
\]
and
\[
\forall s,t\in F,\ es=et\iff \pi(s)=\pi(t).
\]
Here and below we will view a partition  $\pi$ of $F$ as a map from $F$ to $\{1,...,\alpha\}=:\ran \pi$ for some $\alpha\in\mathbb{Z}_{\geq0}$.

Similarly let $F_{|\pi}^\sigma$ be the set of $e\in \{1,\ldots d\}$ (identified with the base space of the transitive relation on $\{1,\ldots, d\}$) such that 
\[
e \leq \prod_{s\in F} \sigma(s)\sigma(s)^{-1}
\]
and
\[
\forall s,t\in F,\ e\sigma(s)=e\sigma(t)\iff \pi(s)=\pi(t).
\]

The following lemma will be useful in the proof of Theorem \ref{T-action Bernoulli infinite}. We denote a projection onto a set $A$ by $p_{A}$.

\begin{lemma}\label{L - First Lemma} Given $F\subset \bb G$,  $n\geq 1$, let 
\[
F_n:=F\cup \{p_{{F_0}_{|\pi}}\mid F_0\subset  F_{\pm}^n, \, \pi\text{ partition of }F_0\}
\]
If $\sigma\in \SA(F_n,4n|F_{\pm}^n|+1,\delta,d)$, $F_{0}\subset F_\pm^n$, and $\pi$ is a partition of $F_0$  then
\[
\left|\mathfrak{h}(F_{0|\pi})-\frac{|F^{\sigma}_{0|\pi}|}{d}\right|<c_1(F,n)\delta.
\]
where $c_1(F,n)={176(3^{2n})|F_{\pm}|^{2n}}$.
\end{lemma}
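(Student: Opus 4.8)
The plan is to reduce the claim to a Hamming--distance comparison between $\sigma(p_{F_{0|\pi}})$ and the ``intrinsic'' model idempotent $p_{F^\sigma_{0|\pi}}$, and then to carry out that comparison one \emph{pair} of generators at a time. Write $p:=p_{F_{0|\pi}}$ and $p^\sigma:=p_{F^\sigma_{0|\pi}}$, let $\f(u):=u\cap G^0$ be the fixed-point idempotent of $u$ (so that $\tau(u)=\tau(\f(u))=\h(\f(u))$ in $\bb G$ and $\tr(v)=\tr(\f(v))$ in $\bb d$), and for $s,t\in F_0$ put $q_{st}:=\f(st^{-1})$ and $q^\sigma_{st}:=\f(\sigma(s)\sigma(t)^{-1})$. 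Since $F_0\subset F_\pm^n$, each $st^{-1}$ has length $\le 2n$, and $p$, each $q_{st}=p_{\{s,t\}_{|\pi_0}}$ (for $\pi_0$ the one-block partition of $\{s,t\}$), and each $\ran(s)=ss^{-1}$ lies in $F_n$; as $4n|F_\pm^n|+1$ exceeds all the short word lengths formed below, the conditions (i)--(ii) of soficity are available for every product I form. First I would record, using $\h(F_{0|\pi})=\tau(p)$, $\tfrac{1}{d}|F^\sigma_{0|\pi}|=\tr(p^\sigma)$, the inequality $|\tr(a)-\tr(b)|\le|a-b|$, and trace-preservation $|\tr\sigma(p)-\h(F_{0|\pi})|<\delta$, that it suffices to bound $|\sigma(p)-p^\sigma|$.

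The workhorse is a \emph{pairwise dictionary}: for each pair $s,t$ the distance $|\sigma(q_{st})-q^\sigma_{st}|$ is $O(\delta)$. To prove it I would combine (i) the relation $(st^{-1})\f(st^{-1})=\f(st^{-1})$ in $\bb G$, which transports to $\sigma(st^{-1})\sigma(q_{st})\approx\sigma(q_{st})$, hence to the approximate containment of $\sigma(q_{st})$ in $\f(\sigma(st^{-1}))$, with (ii) the trace computation
\[
\tr\sigma(q_{st})\approx\tau(q_{st})=\tau(st^{-1})\approx\tr\sigma(st^{-1})=\tr\f(\sigma(st^{-1}))\approx\tr q^\sigma_{st},
\]
which uses trace-preservation, the identities $\tau\circ\f=\tau$ and $\tr\circ\f=\tr$, the multiplicativity estimate $\sigma(st^{-1})\approx\sigma(s)\sigma(t)^{-1}$, and the $1$-Lipschitz continuity of $\f$ and of $\tr$. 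An approximate containment together with approximately equal trace forces $\sigma(q_{st})$ and $q^\sigma_{st}$ to be $O(\delta)$-close. Every word here has length $\le 2n$, so the per-pair error constant is of size $O(3^{2n})$ and uniform in the pair.

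With the dictionary I would obtain one inclusion cheaply and the other by an orthogonality argument. Transporting the defining relations of $p$ --- namely $p\,q_{st}=p$ when $\pi(s)=\pi(t)$, $p\,q_{st}=0$ when $\pi(s)\ne\pi(t)$, and $p\le ss^{-1}$ --- and substituting $\sigma(q_{st})\approx q^\sigma_{st}$, one sees that $\sigma(p)$ obeys the model $\pi$-conditions off a union of $\binom{|F_0|}{2}$ per-pair exceptional sets of size $O(\delta)d$, so $|\sigma(p)\setminus p^\sigma|$ is $O(|F_0|^2\delta)$. For the reverse inclusion the essential idea is to avoid summing an error over the (super-exponentially many) partitions $\rho$ of $F_0$. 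I would use that $\{p_{F_{0|\rho}}\}_\rho$ is an orthogonal family in $\bb G$ with $\sum_\rho p_{F_{0|\rho}}=\prod_s ss^{-1}$ and, for each pair, $q_{st}=\sum_{\rho:\,\rho(s)=\rho(t)}p_{F_{0|\rho}}$; invoking the additivity of $\sigma$ on orthogonal $\bsigma$-sums (a single error per sum, independent of the number of summands) yields the Bell-free relations $\sum_\rho\sigma(p_{F_{0|\rho}})\approx\prod_s\sigma(s)\sigma(s)^{-1}$ and $q^\sigma_{st}\approx\sum_{\rho(s)=\rho(t)}\sigma(p_{F_{0|\rho}})$. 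These say that, pair by pair, the intrinsic model coincidence pattern of a point agrees with the pattern of the (essentially unique) block $\sigma(p_{F_{0|\rho}})$ containing it, except on $\binom{|F_0|}{2}$ per-pair sets of size $O(\delta)d$; hence the number of ``mis-placed'' points is $O(|F_0|^2\delta)d$, giving $|p^\sigma\setminus\sigma(p)|=O(|F_0|^2\delta)$ as well. Combining the two inclusions, using $|F_0|\le|F_\pm^n|\le|F_\pm|^n$ so that the pair count is $O(|F_\pm|^{2n})$, and folding in the per-pair constant $O(3^{2n})$, produces a bound of the form $c_1(F,n)\delta$ of the stated size $176\cdot 3^{2n}|F_\pm|^{2n}$.

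The main obstacle is exactly this reverse inclusion. The two naive routes both blow the constant up past $c_1$: expressing $p_{F_{0|\pi}}$ as a single product of the $\binom{|F_0|}{2}$ pairwise idempotents exceeds the available multiplicativity degree $4n|F_\pm^n|+1$ once $|F_0|$ is large, and recovering $p^\sigma$ from the $\sigma(p_{F_{0|\rho}})$ by Möbius inversion over the partition lattice introduces coefficients whose absolute values sum to $|F_0|!$. The crux is therefore to keep every estimate at the scale of the number of \emph{pairs} rather than of \emph{partitions}, and the device that achieves this is the uniform additivity of $\sigma$ on orthogonal families, which turns the global reverse inclusion into a union bound over the $\binom{|F_0|}{2}$ pairs. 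A secondary but necessary check is that each product actually formed remains inside $\bsigma(F_n)^{4n|F_\pm^n|+1}_\pm$, so that conditions (i)--(ii) genuinely apply.
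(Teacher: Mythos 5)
Your proposal is correct and follows essentially the same route as the paper: transport the pairwise coincidence relations defining $F_{0|\pi}$ through $\sigma$ with one $O(3^{2n}\delta)$ error per pair of generators, and obtain the reverse containment from the exact orthogonal decomposition $\sum_{\rho} p_{F_{0|\rho}}=\prod_{s\in F_0}ss^{-1}$ together with the summand-count-independent additivity of $\sigma$ on orthogonal sums, which is exactly how the paper avoids the Bell-number blowup you identify as the crux. The only difference is bookkeeping: the paper collects all per-pair and per-sum exceptional points into a single good set $V$ and deduces $\sigma(p_j)p_V=p_j^{\sigma}p_V$ from term-wise range containment plus equality of the two orthogonal sums, whereas you run the two inclusions separately via the fixed-point idempotents $q_{st}$ and a unique-block counting argument --- the same mass argument in different clothing.
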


\begin{proof}
Fix some $F_{0}\subset F_{\pm}^n$ and let $\pi_{1},\pi_{2},\ldots, \pi_{m}$ be the family of partitions of $F_{0}$ (so $m=\B(|F_0|)$ is the Bell number of $|F_0|$). For convenience let $p_j:=p_{F_{0|\pi_j}}$ and $p_j^\sigma:=p_{F^{\sigma}_{0|\pi_j}}$.

If $\pi_j(s)=\pi_j(t)$, $p_j= st^{-1}p_j$ so
\begin{align*}
&\left| \sigma(s)\sigma(t)^{-1}\sigma(p_j)-\sigma(p_j)\right|  =\left| \sigma(s)\sigma(t)^{-1}\sigma(p_j)-\sigma(st^{-1}p_j)\right|\\
 &\hspace{1cm} \leq\left| \sigma(s)\sigma(t)^{-1}\sigma(p_j)-\sigma(s)\sigma(t^{-1})\sigma(p_j)\right|+\left| \sigma(s)\sigma(t^{-1})\sigma(p_j)-\sigma(st^{-1}p_j)\right|\\
 &\hspace{1.5cm} \le4\delta+3\delta=7\delta
\end{align*}
by \cite[Lemma 3.1(8)]{DKP1}. 

Similarly if $\pi_j(s)\neq \pi_j(t)$ then
\begin{align*}
\left| \sigma(s)\sigma(t)^{-1}\sigma(p_j)-\sigma(st^{-1}p_j)\right|\leq & 7\delta.
\end{align*}
As $\tau(st^{-1}p_j)=0$ we have 
\begin{align*}
&\left|\tr(\sigma(s)\sigma(t)^{-1}\sigma(p_j))\right|  =\left|\tr(\sigma(s)\sigma(t)^{-1}\sigma(p_j))-\tau(st^{-1}p_j)\right|\\
  &\hspace{1cm} \leq\left|\tr(\sigma(s)\sigma(t)^{-1}\sigma(p_j))-\tr\circ\sigma(st^{-1}p_j)\right|+\left|\tr\circ\sigma(st^{-1}p_j)-\tau(st^{-1}p_j)\right|\\
  &\hspace{1.5cm} \leq7\delta+\delta=8\delta
\end{align*}
by \cite[Lemma 3.1 (4)]{DKP1}

Moreover
\begin{align*}
\left| \sigma(p_j)-\sigma(p_j)\sigma(p_j)\right| & =\left| \sigma(p_j^{2})-\sigma(p_j)\sigma(p_j)\right|<\delta.
\end{align*}
and since $\displaystyle \sum_{j=1}^mp_j=\prod_{s\in F_0}ss^{-1}$ using \cite[Lemma 3.5]{DKP1}
\begin{align*}
\left| \sum_{j=1}^{m}\sigma(p_j)-{\prod_{s\in F_{0}}}\sigma(s)\sigma(s)^{-1}\right| & <\left| \sigma\left ( \prod_{s\in F_{0}}ss^{-1}\right)-{\prod_{s\in F_{0}}}\sigma(s)\sigma(s)^{-1}\right| +150(2|F|+1)^{2n}\delta\\
 & \leq|F_{0}|4\delta+|F_{0}|2\delta+\delta+150(2|F|+1)^{2n}\delta\\
 &\leq157 (3^{2n})|F_{\pm}|^{2n}\delta.
\end{align*}
If $V$ denotes the set of integers $1\leq k\leq d$ such that for all $1\leq j\leq m$ and $s,t\in F_0$: 
\[
\sigma(s)\sigma(t)^{-1}\sigma(p_j)k=\sigma(p_j)k
\]
\[
\sigma(s)\sigma(t)^{-1}\sigma(p_j)k\neq k
\]
\[
\sigma(p_j)k=\sigma(p_j)\sigma(p_j)k
\]
\[
\Bigl(\sum_{j=1}^{m}\sigma(p_j)\Bigr)k=\Bigl(\prod_{s\in F_{0}}\sigma(s)\sigma(s)^{-1}\Bigr)k
\]
then
\begin{align*}
|V| & \geq\left(1-7\delta| F_{\pm}^n|-7\delta| F_{\pm}^n|-\delta| F_{\pm}^n|-\delta| F_{\pm}^n|-157(3^{2n})| F_{\pm}|^{2n}\delta\right)d\\
 & \geq\left(1-175(3^{2n})| F_{\pm}|^{2n}\delta\right)d
\end{align*}
So, for all $\pi_j$ we have that $\sigma(p_j)_{|V}$
is a projection and 
\[
\pi_j(s)=\pi_j(t)\Rightarrow\sigma(s)\sigma(t)^{-1}\sigma(p_j)_{|V}=\sigma(p_j)_{|V}
\text{ and }\tr\left(\sigma(s)\sigma(t)^{-1}\sigma(p_j)_{|V}\right)=0
\]
Hence $\ran(\sigma(p_j) p_V)\subset \ran(p_j^\sigma p_V)$,
but since we also have that 
\[
\Bigl(\sum_{j=1}^{m}\sigma(p_j)\Bigr)p_{V}=\Bigl(\prod_{s\in F_{0}}\sigma(s)\sigma(s)^{-1}\Bigr)p_{V}=\Bigl(\sum_{j=1}^{m}p_j^\sigma\Bigr)p_{V}
\]
we conclude $\sigma(p_j)p_{V}=p_j^\sigma p_{V}$
which implies $\left|\tr\circ\sigma(p_j)-|F^{\sigma}_{0|\pi_j}|/d\right|<175(3^{2n})| F_{\pm}|^{2n}\delta$
therefore
\begin{align*}
\left|\mathfrak{h}(F_0^{\pi_j})-{|F^{\sigma}_{0|\pi_j}|}/{d}\right| & =\left |\tr(p_j)-{|F^{\sigma}_{0|\pi_j}|}/{d}\right|\\
 & \leq|\tr(p_j)-\tr\circ \sigma(p_j)|+\left|\tr\circ \sigma(p_j)-|F^{\sigma}_{0|\pi_j}|/d\right|\\
 & \le\delta+175(3^{2n})| F_{\pm}|^{2n}\delta\leq176(3^{2n})| F_{\pm}|^{2n}\delta.
\end{align*}
\end{proof}

\section{Bernoulli shifts and random partitions} \label{S - Bowen groupoids}

 For $1\leq i\leq q$ let
\[
B_{i}:=\{x\in  \{1,\ldots, q\}^G\mid x_e=i\}
\]
where $X=\{1,\ldots,q\}^G$ is endowed with the Bernoulli action $G\acts X$,
and for $d\in \IZ_{\geq 1}$ let $A_1,\ldots, A_q$ be a partition of $\{1,\ldots, d\}$.
Given a set $F\subset \bb G$ and function $\psi:F\rightarrow\{1,\ldots,q\}$
we define 
\[
B_{\psi}:=\bigcap_{s\in F}sB_{\psi(s)}
\]
and
\[
A_{\psi}:=\bigcap_{s\in F_{\psi}}\sigma(s)A_{\psi(s)}
\]
where $\sigma : F \to \bb d$.

 The next lemma  adapts the
proof of Theorem 8.1 from L. Bowen's paper \cite{Bowen2010}.

\begin{lemma} \label{L -random partitions} If $d$ is large enough (depending on $F$, $n$ and $\delta$) then there is a partition $\{A_{1},A_{2},\ldots,A_{q}\}$ of $\{1\,\ldots d\}$ such that if $\sigma\in \SA(F_{n},4n|F_{\pm}^n|+1,\delta,d)$
then for every subset $F_{\psi}\subset F_\pm^n$ and every function $\psi:F_{\psi}\rightarrow\{1,\ldots, q\}$
\[
\left|\mu(B_{\psi})-\frac{|A_{\psi}|}{d}\right|<c_2(F,n){\delta}
\]
where $c_2(F,n)=2c_1(F,n)\B(|F_{\pm}|^{n})$ and $\B$ is the Bell number.
\end{lemma}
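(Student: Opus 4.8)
The plan is to build the partition at random, fixing it once and for all \emph{before} quantifying over $\sigma$, and to split the argument into a first-moment computation (which is where the hypothesis $\sigma\in\SA$ enters, through Lemma \ref{L - First Lemma}) and a concentration-of-measure computation (which holds for \emph{every} map into $\bb d$ and is what delivers the uniformity over $\sigma$).

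First I would let $\chi\colon\{1,\dots,d\}\to\{1,\dots,q\}$ be a random colouring with coordinates $\chi(k)$ i.i.d.\ of law $\mu_0$, and set $A_i:=\chi^{-1}(i)$. Writing $k\in A_\psi$ as the event that $\chi$ takes the value $\psi(s)$ at each of the points $\sigma(s)^{-1}k$ ($s\in F_\psi$), I would compute $\mathbb{E}_\chi|A_\psi|$ by grouping the indices $k$ according to the coincidence pattern of the tuple $(\sigma(s)^{-1}k)_{s\in F_\psi}$, that is, according to the sets $F^{\sigma}_{F_\psi|\pi}$. For a fixed pattern $\pi$ the colouring constraints involve exactly $|\ran\pi|$ distinct, hence independent, coordinates, so each $k\in F^{\sigma}_{F_\psi|\pi}$ contributes $\prod_{c}\mu_0(\psi(c))$ when $\psi$ is constant on the $\pi$-classes and $0$ otherwise. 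The Bernoulli measure $\mu(B_\psi)$ admits the identical decomposition over the base, with $\h(F_{F_\psi|\pi})$ in place of $|F^{\sigma}_{F_\psi|\pi}|/d$. Subtracting the two expressions and applying Lemma \ref{L - First Lemma} term by term over the at most $\B(|F_\pm|^n)$ patterns yields $\bigl|\mathbb{E}_\chi(|A_\psi|/d)-\mu(B_\psi)\bigr|\le c_1(F,n)\B(|F_\pm|^n)\delta=\tfrac12 c_2(F,n)\delta$, uniformly in $\sigma\in\SA$ and in $(F_\psi,\psi)$.

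It then remains to replace $\mathbb{E}_\chi(|A_\psi|/d)$ by its realized value with an error $<\tfrac12 c_2(F,n)\delta$, \emph{simultaneously for all} $\sigma$. Here I would use a bounded-difference (McDiarmid) estimate: changing one colour $\chi(k)$ alters the membership of at most $|F_\pm^n|$ indices in any $A_\psi$, so $|A_\psi|/d$ has coordinate differences bounded by $|F_\pm^n|/d$ and deviates from its mean by more than $\eta$ with probability at most $2\exp(-2\eta^2 d/|F_\pm^n|^2)$. The number of pairs $(F_\psi,\psi)$ is bounded independently of $d$, but the number of $\sigma$ is of order $e^{O(d\log d)}$, so a naive union bound fails; this is the main obstacle. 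To overcome it I would exploit that $\sigma\mapsto|A_\psi|/d$ is $|F_\pm^n|$-Lipschitz for the normalized Hamming distance (both for the realized value and for the mean), so it suffices to control a net: fix an $\e(d)$-net $\mathcal N$ of $\bb d^{\,F_\pm^n}$ with $\log|\mathcal N|=O\bigl(|F_\pm^n|\,\e(d)\,d\log d\bigr)$ and take $\e(d)$ of order $1/\log d$, with the implied constant tuned to $\eta=\tfrac12 c_2(F,n)\delta$ and $|F_\pm^n|$. This makes $\log|\mathcal N|$ of order $\eta^2 d/|F_\pm^n|^2$ while the Lipschitz slack $|F_\pm^n|\e(d)\to0$; the concentration exponent $2\eta^2 d/|F_\pm^n|^2$ then strictly dominates $\log|\mathcal N|$ for $d$ large. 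Hence the union bound over $\mathcal N$ and over $(F_\psi,\psi)$ closes, a colouring $\chi$ meeting all the inequalities exists with positive probability, and combining the two error bounds gives the asserted $<c_2(F,n)\delta$.

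The delicate point, and the step I expect to require the most care, is exactly this balancing of the Hamming covering number of the sofic approximations against the concentration rate: it is what makes a single partition work for every $\sigma\in\SA$ rather than merely for one fixed $\sigma$, and it is the reason the constant $c_2$ carries the extra factor $2$ over the $c_1\B$ produced by the first-moment step.
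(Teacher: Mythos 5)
Your first step --- the i.i.d.\ colouring, the grouping of indices $k$ by the coincidence pattern of $(\sigma(s)^{-1}k)_{s\in F_\psi}$, and the use of Lemma \ref{L - First Lemma} pattern by pattern to get $\bigl|\mathbb{E}_\chi(|A_\psi|/d)-\mu(B_\psi)\bigr|\leq \B(|F_{\pm}|^{n})c_1(F,n)\delta=\tfrac12 c_2(F,n)\delta$ --- is correct and coincides with the first-moment computation in the paper's proof. The gap is in your second step, and it cannot be repaired. The estimate $\log|\mathcal N|=O\bigl(|F_\pm^n|\,\e(d)\,d\log d\bigr)$ is not the covering number of $\bb d^{F_\pm^n}$: it is the log-\emph{volume of a single $\e(d)$-ball}. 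Since $\log|\bb d|\geq\log d!\sim d\log d$ while an $\e$-ball has log-volume only $\e d\log d+O(d)$, any $\e$-net of $\bb d^{F_\pm^n}$ --- or even of $\SA$ itself, whose log-cardinality is in general of order $d\log d$ (that is the whole point of the invariant $s$) --- has $\log|\mathcal N|$ at least of order $(1-\e)d\log d$, which dominates the McDiarmid exponent $\Theta(\delta^2d)$ for every $\e<1$; the union bound over the net never closes. In fact no argument can close it, because the uniform-over-$\sigma$ statement you are proving is false. Take $G=\IZ/2=\{1,g\}$, $F=\{g\}$, $q=2$, $\mu_0$ uniform, and $\delta$ small. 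Any candidate partition must have $\bigl||A_i|/d-\tfrac12\bigr|<c_2\delta$ (apply the conclusion to $F_\psi=\{1\}$), so one can choose an involution $\rho$ with at most one fixed point mapping $A_1$ into $A_2$ up to at most $\bigl||A_1|-|A_2|\bigr|\leq 2c_2\delta d$ points; setting $\sigma(1)=\mathrm{id}$, $\sigma(g)=\rho$ gives an \emph{exactly} multiplicative, exactly trace-preserving element of $\SA(F_n,4n|F_\pm^n|+1,\delta,d)$ for which, with $F_\psi=\{1,g\}$ and $\psi\equiv1$, one has $\mu(B_\psi)=\tfrac14$ but $|A_\psi|/d=|A_1\cap\rho A_1|/d\leq 2c_2\delta$. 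So no single partition can serve all $\sigma\in\SA$.

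What rescues the lemma is the quantifier order that the paper's proof actually establishes and that its applications actually use: for $d$ large enough, \emph{for each} $\sigma\in\SA(F_n,4n|F_\pm^n|+1,\delta,d)$ there is a partition, depending on $\sigma$, satisfying all the inequalities. (In Lemma \ref{L - approx equiv} and in the proof of Lemma \ref{P-action Bernoulli}, $\sigma$ is fixed first and the partition, hence $\varphi$, is chosen for that $\sigma$; nothing more is needed.) For this version your machinery suffices with no net at all: for fixed $\sigma$, your McDiarmid bound (or the paper's Chebyshev bound, obtained by counting the at most $|F_\pm^n|^2|F^\sigma_{\psi|\pi}|$ dependent pairs $(k,l)$) shows that the random colouring violates a given triple $(F_\psi,\psi,\pi)$ with probability tending to $0$ as $d\to\infty$, and the number of such triples is bounded independently of $d$, so a partition (indeed most partitions) works for this $\sigma$. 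That is precisely the paper's argument, with the factor $2$ in $c_2$ playing the same role in its per-pattern split as in your first-moment/concentration split.
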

\begin{proof}
Fix $F_{\psi}\subset F_{\pm}^n$ and $\psi:F_{\psi}\rightarrow\{1,\ldots, q\}$. Create a random partition 
\[
\{A_{1},A_{2},\ldots,A_{q}\}
\] 
of
$\{1\,\ldots d\}$  using the following scheme: for each $k\in\{1,\ldots,d\}$ place $k$
in $A_{i}$ with probability $\mu_{0}(i)$. 

We will find the probability that $|\mu(B_{\psi})-|A_{\psi}|/d|<c_2(F,n)\delta$.

Let $\pi_{1},\pi_{2},\ldots, \pi_{m}$ be the family of partitions of $F_{\psi}$. Let $s_j: \ran \pi_j \to F_\psi$ be an arbitrary section of $\pi_j$. If
\[
\chi_j:=\begin{cases}
0 &\text{if }    \exists s, t\in F_\psi\text{ such that }\pi_j(s)=\pi_j(t)\text{ and }\psi(s)\neq \psi(t)\\
1 &\text{otherwise}
\end{cases}
\]
then 
\begin{align*}
\mu(B_{\psi}) & =\mu\left (\bigcap_{s\in F_{\psi}}sB_{\psi(s)}\right)\\
 & =\sum_{j=1}^{m}\chi_j\mathfrak{h}(F_{\psi|\pi_j})\prod_{r\in \ran \pi_j}\mu_{0}\left (\psi(s_j(r))\right)
\end{align*}
Since for $\sigma\in \SA(F_{n},4n|F_{\pm}^n|+1,\delta,d)$, we have 
\[
\frac{|A_{\psi}|}{d}={\displaystyle \sum_{j=1}^{m}}\frac{|A_{\psi}\cap F_{\psi|\pi_j}^{\sigma}|}{d}
\] 
it will suffice to show by the triangle inequality  that for all $j$
\[
\left |\chi_j\mathfrak{h}(F_{\psi|\pi_j})\prod_{k\in \ran \pi_j}\mu_{0}\left (\psi(s_j(k))\right)-\frac{|A_{\psi}\cap F_{\psi|\pi_j}^{\sigma}|}{d}\right|<2c_1(F,n){\delta}.
\]

First suppose $\chi_j=0$, so for some $s, t\in F_\psi$,  $\pi_j(s)=\pi_j(t)$ and $\psi(s)\neq \psi(t)$. 
Then $A_{\psi}\cap F_{\psi|\pi_j}^{\sigma}=\emptyset$, for if $k\in A_{\psi}\cap F_{\psi|\pi_j}^{\sigma}$ then $k\in\sigma(s)A_{\psi(s)}\cap\sigma(t)A_{\psi(t)}\cap F_{\psi|\pi_j}^{\sigma}$
which implies $\sigma(s)^{-1}k=\sigma(t)^{-1}k\in A_{\psi(s)}\cap A_{\psi(t)}$, a
contradiction.

Next suppose $\chi_j=1$. 
For $1\leq k\leq d$ let 
\[
Z_{k}=\begin{cases}
1 & \text{if } k\in F_{\psi|\pi_j}^{\sigma}\cap A_{\psi}\\
0 & \text{otherwise}
\end{cases}
\] 
We wish to compute $\mathbb{E}(Z_{k})$.

For $k\notin F_{\psi|\pi_j}^{\sigma}$ we have $\mathbb{E}(Z_{k})=0$. Otherwise,
$k\in F_{\psi|\pi_j}^{\sigma}\cap A_{\psi}$ if and only if $\sigma_{s_j(r)}^{-1}k\in A_{\psi(s_j(r))}$
for all $r\in \ran \pi_j$ and since 
\[
\sigma_{s_j(r)}^{-1}k\neq\sigma_{s_j(r')}^{-1}k,\ \forall r\neq r'\in \ran \pi_j
\]
we obtain
\[
\mathbb{E}(Z_{k})=\prod_{r\in \ran \pi_j}\mu_{0}(\psi(s_j(r)))
\]
Thus if we let $Z=\sum_{k=1}^{d}Z_{k}=|F_{\psi|\pi_j}^{\sigma}\cap A_{\psi}|$
then 
\[
\mathbb{E}(Z)=|F_{\psi|\pi_j}^{\sigma}|\prod_{r\in \ran \pi_j}\mu_{0}(\psi(s_j(r))).
\]

Now lets bound $\var(Z)$: 
\[
\var(Z)=\mathbb{E}(Z^{2})-\mathbb{E}(Z)^{2}=\sum_{k,l\in\{1\ldots,d\}}\mathbb{E}(Z_{k}Z_{l})-\mathbb{E}(Z)^{2}
\]
For $k,l\notin F_{\psi|\pi_j}^{\sigma}$ we have $\mathbb{E}(Z_{k}Z_{l})=0=\mathbb{E}(Z_{k})\mathbb{E}(Z_{l})$.
On the other hand if $k,l\in F_{\psi|\pi_j}^{\sigma}$ then $Z_k$ and $Z_l$ are not independent
if $\sigma_{s_j(r)}^{-1}k=\sigma_{s_j(r')}^{-1}l$ for some
$r,r'$. Thus are at most $| F_{\pm}^n|^{2}|F_{\psi|\pi_j}^{\sigma}|$
non independent pairs $(k,l)$. Now clearly for these pairs $\mathbb{E}(Z_{k}Z_{l})\leq\mathbb{E}(Z_{k})\mathbb{E}(Z_{l})+1$.
So returning to our equation above
\[
\sum_{1\leq k,l\leq d}\mathbb{E}(Z_{k}Z_{l})-\mathbb{E}(Z)^{2}\leq\sum_{1\leq k,l\leq d}\mathbb{E}(Z_{k})\mathbb{E}(Z_{l})+| F_{\pm}^n|^{2}|F_{\psi|\pi_j}^{\sigma}|-\mathbb{E}(Z^{2})
\]
\[
=\mathbb{E}(Z^{2})+| F_{\pm}^n|^{2}|F_{\psi|\pi_j}^{\sigma}|-\mathbb{E}(Z^{2})=| F_{\pm}^n|^{2}|F_{\psi|\pi_j}^{\sigma}|
\]

Now we can apply Chebyshev's inequality to $\frac{Z}{d}$ for $a>0$
\[
\Pr\left (\left |\frac{Z}{d}-\frac{\mathbb{E}(Z)}{d}\right |\geq a \right)\leq\frac{\var(\frac{Z}{d})}{a^{2}}\leq\frac{| F_{\pm}^n|^{2}|F_{\psi|\pi_j}^{\sigma}|}{a^{2}d^{2}}\leq\frac{| F_{\pm}^n|^{2}}{a^{2}d}
\]
Since $Z=|A_{\psi}\cap F_{\psi|\pi_j}^{\sigma}|$
and
$\mathbb{E}(Z)=|F_{\psi|\pi_j}^{\sigma}|\prod_{r\in \ran \pi_j}\mu_{0}(\psi(s_j(r)))$,
and, by Lemma \ref{L - First Lemma}, 
\[
 \left |\mathfrak{h}(F_{{\psi}|{\pi_j}})-\frac{|F_{\psi|\pi_j}^{\sigma}|}{d}\right|<c_1(F,n)\delta
\]
we have
\[
\Pr\left (\left|\mathfrak{h}(F_{{\psi}|{\pi_j}})\prod_{k\in \ran \pi_j}\mu_{0}\left (\psi(s_j(k))\right)-|F_{{\psi}|{\pi_j}}^\sigma\cap A_{\psi}|/d\right|\geq a+c_1(F,n)\delta\right )\leq\frac{| F_{\pm}^n|^{2}}{a^{2}d}
\]
Let $a=c_1(F,n)\delta$ so then
\[
\Pr\left (\left|\mathfrak{h}(F_{{\psi}|{\pi_j}})\prod_{k\in \ran \pi_j}\mu_{0}\left (\psi(s_j(k))\right)-|F_{{\psi}|{\pi_j}}^\sigma\cap A_{\psi}|/d\right|\geq2c_1(F,n)\delta\right)\leq\frac{| F_{\pm}^n|^{2}}{(c_1(F,n)\delta){}^{2}d}
\]
So for $d$ large enough there is some partition $(A_{1},\ldots, A_{q})$
such that 
\[
\left |\mathfrak{h}(F_{{\psi}|{\pi_j}})\prod_{k\in \ran \pi_j}\mu_{0}\left (\psi(s_j(k))\right)-|F_{{\psi}|{\pi_j}}^\sigma\cap A_{\psi}|/d\right|<2c_1(F,n)\delta
\]
Thus, for $d$ large enough there will be a partition so that this true
for all $j$ so that 
\[
\left|\frac{|A_{\psi}|}{d}-\mu(B_{\psi})\right|<{c_2(F,n)} \delta.
\]
Indeed for large enough $d$ nearly all partitions will satisfy this
and hence we will have non zero probability that for all $F_{\psi}\subset F_{\pm}^n$ and $\psi:F_{\psi}\rightarrow\{1,\ldots, q\}$
the inequality $|\mu(B_{\psi})-|A_{\psi}|/d|<{c_2(F,n)} \delta$
holds. 
\end{proof}

\section{A lemma on approximate equivariance}\label{S - linear}
Given a groupoid $G$ acting on a set $X$ for any finite set of projections
$P\subset L^{\infty}(X,\mu)$  and finite set $F\subset \bb G$ then $P_F$ denotes the set
of all projections of the form 
\[
\prod_{s\in F'}sp_{s}
\]
 where $p_{s}\in P$ and $F'\subset F$.  
 
 Let $P=\{p_{B_i}\}$ where 
 \[
 B_{i}:=\{x\in  \{1,\ldots, q\}^G\mid x_e=i\}
 \] 
and $X=\{1,\ldots,q\}^G$ is endowed with the Bernoulli action $G\acts X$. Fix $F\subset \bb G$
and a basis $\{p_{B_{\psi_{1}}},p_{B_{\psi_{2}}}...p_{B_{\psi_{\ell}}}\}$
for $\vspan( P_{F_{\pm}^n} )$ in $L^\infty(X,\mu)$ associated with $\psi_i : F_{\psi_i}\to \{1,\ldots,q\}$, $1\leq i\leq \ell$. 

We set
\[
\kappa=\max_{\psi,\psi_1,\ldots,\psi_\ell, a_1,\ldots a_\ell\mid p_{B_\psi}=\sum_{i=1}^{\ell}a_{i}p_{B_{\psi_{i}}}}|a_i|\geq 1.
\]
Both $\kappa$ and $\ell$ depend on $F$ and $n$ only.

\begin{lemma}\label{L - approx equiv} Let $\sigma\in \SA(F_{n},4n|F_{\pm}^n|+1,\delta,d)$ and take a partition
$\{A_{1},\ldots,A_{q}\}$ satisfying the conclusion of Lemma \ref{L -random partitions}. 

If for some $\psi:F_{\psi}\rightarrow\{1,\ldots,q\}$ 
\[
p_{B_{\psi}}=\sum_{i=1}^{\ell}a_{i}p_{B_{\psi_{i}}}
\]
then
\[
\left \|p_{A_{\psi}}-\sum_{i=1}^{\ell}a_{i}p_{A_{\psi_{i}}}\right\|_{2}<c_3(F,n)\sqrt{\delta}
\]
where $c_3(F,n)=((1+(3+\kappa\ell )\B(\ell) {N_\ell}))c_2(F,n)$ and $N_\ell=\ell {2^\ell}$.
\end{lemma}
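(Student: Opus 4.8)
The goal is to show that the linear relation $p_{B_\psi}=\sum_{i=1}^\ell a_i p_{B_{\psi_i}}$ among indicator functions in $L^\infty(X,\mu)$ is approximately reflected on the "sofic side," where each set $B_\psi$ is replaced by $A_\psi\subset\{1,\ldots,d\}$. Lemma~\ref{L -random partitions} gives us the crucial dictionary: $\big|\mu(B_\psi)-|A_\psi|/d\big|<c_2(F,n)\delta$ for every $\psi$. So the $|A_\psi|/d$ behave, up to error $c_2\delta$, like the measures $\mu(B_\psi)$. Since $\|p_{A_\psi}-\sum a_i p_{A_{\psi_i}}\|_2^2 = \tr\big((p_{A_\psi}-\sum a_i p_{A_{\psi_i}})^2\big)$ expands into a linear combination of the quantities $|A_\psi\cap A_{\psi_i}|/d$ and $|A_{\psi_i}\cap A_{\psi_j}|/d$, the whole strategy is to reduce everything to counting normalized cardinalities of intersections and matching them against the corresponding $\mu$-measures using Lemma~\ref{L -random partitions}.

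**The plan.** First I would expand the squared 2-norm on the sofic side:
\begin{align*}
\left\|p_{A_\psi}-\sum_{i=1}^\ell a_i p_{A_{\psi_i}}\right\|_2^2
&= \frac{|A_\psi|}{d} - 2\sum_{i=1}^\ell a_i\frac{|A_\psi\cap A_{\psi_i}|}{d} + \sum_{i,j=1}^\ell a_i a_j\frac{|A_{\psi_i}\cap A_{\psi_j}|}{d}.
\end{align*}
The key observation is that an intersection such as $A_{\psi_i}\cap A_{\psi_j}$ is again of the form $A_{\psi'}$ for a suitable function $\psi'$ on $F_{\psi_i}\cup F_{\psi_j}\subset F_\pm^n$ (taking the conjunction of the two cylinder conditions), and similarly $B_{\psi_i}\cap B_{\psi_j}=B_{\psi'}$. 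When the two conditions are incompatible (they disagree on some $s$ in the common domain) both intersections are empty on the two sides, so those terms vanish identically. Thus every term in the expansion can be matched, via Lemma~\ref{L -random partitions}, to the corresponding $\mu$-measure up to an additive error $c_2(F,n)\delta$. Doing the same expansion for $\|p_{B_\psi}-\sum a_i p_{B_{\psi_i}}\|_2^2$ in $L^\infty(X,\mu)$ gives \emph{zero}, since $p_{B_\psi}=\sum a_i p_{B_{\psi_i}}$ exactly. Subtracting the two expansions, the squared sofic norm is bounded by the total accumulated error.

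**Bookkeeping the constant.** The remaining work is to count how many error terms of size $c_2\delta$ arise and weight them by the coefficients. There are roughly $1 + 2\ell + \ell^2$ terms, and each intersection term carries a product of at most two coefficients $a_i,a_j$, each bounded by $\kappa$. The factor $\B(\ell)$ appears because reducing an intersection $A_{\psi_i}\cap A_{\psi_j}$ to a single $A_{\psi'}$ may require summing over the partitions of the combined generating set, and $N_\ell=\ell 2^\ell$ bounds the number of functions $\psi_i$ and their restrictions involved. Collecting these, the squared norm is bounded by $c_3(F,n)^2\delta$ with $c_3(F,n)=\big(1+(3+\kappa\ell)\B(\ell)N_\ell\big)c_2(F,n)$, and taking square roots yields the claimed bound $c_3(F,n)\sqrt\delta$.

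**Main obstacle.** The conceptual content is entirely in the first two paragraphs and is straightforward once the dictionary of Lemma~\ref{L -random partitions} is in hand; the genuine difficulty is purely in the constant-tracking. The subtle point I would be most careful about is that an intersection of two cylinder sets $A_{\psi_i}\cap A_{\psi_j}$ need not literally be one of the basis sets $A_{\psi_k}$, so one must re-express it as a (possibly signed) combination of basis elements, or directly as $A_{\psi'}$ for the merged function, and only then apply the lemma. Ensuring that the compatible/incompatible dichotomy behaves identically on the $B$-side and the $A$-side—so that empty intersections match up and no spurious error is introduced—is the one place where the argument could silently break, and it is where I would spend the most care.
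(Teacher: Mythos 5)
Your proof is correct, but it takes a genuinely different route from the paper's. The paper runs an inclusion--exclusion argument over the atoms $D_I=\{x\mid x\in B_{\psi_i}\iff i\in I\}$ of the Boolean algebra generated by the $B_{\psi_i}$ (and their sofic counterparts $D_I^\sigma$, $\tilde D_I^\sigma$), using that the relation $p_{B_\psi}=\sum_i a_ip_{B_{\psi_i}}$ forces the coefficient sums $a(I)=\sum_{i\in I}a_i$ to lie in $\{0,1\}$ on every atom of positive measure, and then controls separately the atoms with $a(I)=1$ and the (null, hence sofically small) atoms with $a(I)\notin\{0,1\}$; this is where the factors $\B(\ell)$ and $N_\ell=\ell 2^\ell$ in $c_3$ come from. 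You instead expand $\bigl\|p_{A_\psi}-\sum_i a_ip_{A_{\psi_i}}\bigr\|_2^2$ into the pairwise intersection counts $|A_\psi\cap A_{\psi_i}|/d$ and $|A_{\psi_i}\cap A_{\psi_j}|/d$ (legitimate, since these are commuting diagonal projections in $\M_d$), subtract the identically vanishing expansion of $\bigl\|p_{B_\psi}-\sum_i a_ip_{B_{\psi_i}}\bigr\|_2^2$, and apply Lemma \ref{L -random partitions} term by term. The one point you rightly flag --- that $A_{\psi_i}\cap A_{\psi_j}$ is either $A_{\psi'}$ for the merged function $\psi'$ on $F_{\psi_i}\cup F_{\psi_j}\subset F_\pm^n$ or empty, with the empty case occurring simultaneously on the $A$- and $B$-sides --- is exactly the observation the paper also uses, and it needs no summation over partitions, so the $\B(\ell)$ in your bookkeeping paragraph is a red herring. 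Your method yields the cleaner bound $(1+\kappa\ell)\sqrt{c_2(F,n)\,\delta}$, which is at most $c_3(F,n)\sqrt\delta$ since $c_2(F,n)\geq 1$; the only thing to add is this explicit comparison of constants, since the lemma is stated with the specific value of $c_3(F,n)$ that the paper's atom decomposition produces.
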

\begin{proof}
For $I\subset\{1,\ldots\ell\}$ define 
\[
D_I:=\{x\in X\mid x\in B_{\psi_{i}}\iff i\in I\}
\]
and similarly
\[
D_I^\sigma:=\{1\leq k\leq d\mid k\in A_{\psi_{i}}\iff i\in I\}.
\]
We want to bound $\left|\mu(D_I)-|D_I^\sigma|/d\right|$.

Note that for any $I\subset\{1,\ldots\ell\}$
\[
\mu\{x\in X\mid \forall i\in I,\ x\in B_{\psi_{i}}\}=\mu\left(\bigcap_{i\in I}B_{\psi_{i}}\right).
\]
Now either for some $s\in F_{\psi_{i_{1}}}\cap F_{\psi_{i_{2}}}$
we have $\psi_{i_{1}}(s)\neq\psi_{i_{2}}(s)$ and so $\bigcap_{i\in I}B_{\psi_{i}}=\emptyset$
or otherwise $\bigcap_{i\in I}B_{\psi_{i}}=B_{\psi'}$ for some $\psi':F_{\psi'}\rightarrow\{1,\ldots,q\}$.
In the first case $\bigcap_{i\in I}A_{\psi_{i}}=\emptyset$ as well
and in the second case by Lemma \ref{L -random partitions} 
\[
|\mu(B_{\psi'})-|A_{\psi'}|/d|<c_2(F,n)\delta
\]
and 
\[
|A_{\psi'}|/d=\left |\bigcap_{i\in I}A_{\psi_{i}}\right|/d=|\{1\leq k\leq d\mid \forall i\in I,\ k\in A_{\psi_{i}}\}|/d
\]
So in general we conclude that $\mu\{x\in X\mid \forall i\in I, \ x\in B_{\psi_{i}}\}$
is within ${c_1(F,n)} \delta$ of $|\{1\leq k\leq d\mid \forall i\in I, \ k\in A_{\psi_{i}}\}|/d$.
By inclusion-exclusion
\[
\mu(D_{I})=\sum_{k=|I|}^{\ell}\sum_{\begin{smallmatrix}
I'\supset I\\
|I'|=k
\end{smallmatrix}}(-1)^{k-|I|}\mu(\{x\in X\mid \forall i\in I',\ x\in B_{\psi_{i}}\})
\]
and similarly
\[
\frac{|D_{I}^\sigma|}{d}=\sum_{k=|I|}^{\ell}\sum_{\begin{smallmatrix}
I'\supset I\\
|I'|=k
\end{smallmatrix}}(-1)^{k-|I|}\frac{1}{d}|\{1\leq k\leq d\mid \forall i\in I',\ k\in A_{\psi_{i}}\}|
\]
so by applying the triangle
inequality at most $ {N_\ell}=\ell{2^\ell}$ times we have 
\[
|\mu(D_I)-|D_I^\sigma|/d|\leq{c_2(F,n)} {N_\ell}\delta.
\]
Now
\[
\mu(B_{\psi})=\sum_{I\subset\{1,\ldots\ell\};\sum_{i\in I}a_{i}=1}\mu(D_I).
\]
So if $\sum_{i\in I}a_{i}=1$ then $\mu\left(\bigcap_{i\in I}B_{\psi_{i}}\right)=\mu\left(\bigcap_{i\in I}B_{\psi_{i}}\cap B_{\psi}\right)$
so by a similar argument $\mu\{x\in B_{\psi}\mid \forall i\in I,\ x\in B_{\psi_{i}}\}$
is within ${c_2(F,n)} \delta$ of $|\{k\in A_{\psi}\mid \forall i\in I, \ k\in A_{\psi_{i}}\}|/d$
So if we let 
\[
\tilde D_{I}^\sigma:=\{k\in A_{\psi}|x\in A_{\psi_{i}}\iff i\in I\}
\]
then
\[
|\mu(D_I)-|\tilde D_{I}^\sigma|/d|\leq{c_2(F,n)} {N_\ell}\delta
\]
so
\[
\left|\frac{|D_I^\sigma|}d-\frac{|\tilde D_{I}^\sigma|}d\right|\leq2{c_2(F,n)} {N_\ell}\delta.
\]
Let $a(I):=\sum_{i\in I}a_{i}$.
\begin{align*}
  &\left\|p_{A_{\psi}}-\sum_{i=1}^{\ell}a_{i}p_{A_{\psi_{i}}}\right\|_{2}\\&\hspace{1cm} =\left\|p_{A_{\psi}}-\sum_{I\subset\{1,\ldots\ell\}}(\sum_{i\in I}a_{i})p_{D_I^\sigma}\right\|_{2}\\&\hspace{1cm} \leq\left\|p_{A_{\psi}}-\sum_{a(I)=1}p_{D_I^\sigma}\right\|_{2}+\left\|\sum_{a(I)\notin\{0,1\}}(\sum_{i\in I}a_{i})p_{D_I^\sigma}\right\|_{2}\\& \hspace{1cm}\leq\left\|p_{A_{\psi}}-\sum_{a(I)=1}p_{\tilde D_{I}^\sigma}\right\|_{2}+\left\|\sum_{a(I)=1}p_{\tilde D_{I}^\sigma}-\sum_{a(I)=1}p_{D_I^\sigma}\right\|_{2}+\kappa \ell\sum_{a(I)\notin\{0,1\}}|D_I^\sigma|/d\\&\hspace{1cm} \leq\sqrt{\left |\frac{|A_{\psi}|}{d}-\sum_{a(I)=1}\frac{|\tilde D_{I}^\sigma|}{d}\right|}+\sum_{a(I)=1}\sqrt{\left |\frac{|\tilde D_{I}^\sigma|}{d}-\frac{|D_I^\sigma|}{d}\right |}+\kappa\ell \B(\ell){c_2(F,n)} \delta{N_\ell}\\&\hspace{1cm} \leq\sqrt{\left|\frac{|A_{\psi}|}{d}-\mu(B_{\psi})\right|+\left|\sum_{a(I)=1}\mu(D_I)-\sum_{a(I)=1}\frac{|\tilde D_{I}^\sigma|}{d}\right|}\\&\hspace{5cm}+\sqrt{\B(\ell)2{c_2(F,n)} \delta{N_\ell}}+\kappa \ell\B(\ell){c_2(F,n)} \delta{N_\ell}\\&\hspace{1cm} \leq\sqrt{{c_2(F,n)\delta}+\sum_{a(I)=1}\left |\mu(D_I)-\frac{|\tilde D_{I}^\sigma|}{d}\right|}+\sqrt{\B(\ell)2{c_2(F,n)} \delta{N_\ell}}+\kappa \ell\B(\ell){c_2(F,n)} \delta{N_\ell}\\&\hspace{1cm} \leq{c_2(F,n)} \sqrt{\delta}+(3+\kappa\ell)Be(\ell) {N_\ell}{c_2(F,n)} \sqrt{\delta}\\&\hspace{1cm}={c_3(F,n)}\sqrt{\delta}.\\
\end{align*}
\end{proof}

\section{Sofic dimension and groupoid actions}\label{S -HA}

We now briefly recall the group action formulation of $s(G\ltimes X)$ given in \cite[Section 5]{DKP2} rephrasing it here in the framework of groupoid actions.

Let $G\acts X$ be a pmp action of a pmp groupoid. Let $1\in F\subset \bb G$ and  
$P$ be a partition of $X$.
We write $\HA(F,P,n,\delta,d)$ for the set of all
pairs $(\sigma,\varphi)$ where $\sigma\in \SA(F,n,\delta,d)$ and
$\varphi$ is a map 
\[
\varphi\colon \bsigma P_{F_{\pm}^n}\to \bb d
\]
defined on $\bsigma P_{F_{\pm}^n}\subset \bb G$ and satisfying

\begin{enumerate}[(i)]
\item $|\tr\circ\varphi(p)-\mu(p)|<\delta$ for all $p\in P_{F_{\pm}^n} $

\item $|\varphi\circ s(p)-\sigma(s)\circ\varphi(p)|<\delta$ for
all $p\in P$ and $s\in F_{\pm}^n$

\item $|\varphi(p_{1}p_{2})-\varphi(p_{1})\varphi(p_{2})|<\delta$
for all $p_{1},p_{2},p_{1}p_{2}\in \bsigma P_{F_{\pm}^n} $.

\item $|\varphi(p_X)-p_{d}|<\delta$, where $p_d:=p_{\{1,\ldots,d\}}$. 
\end{enumerate}
(Note that since $P$
partitions $X$ and $F$ contains the identity we have $p_{X}\in \vspan P_{F_{\pm}^n}$.) 

We  observe that if $\p$ satisfies these conditions then it is automatically approximately linear: 

\begin{lemma} If $(\sigma,\varphi)\in \HA(F,4n,\delta,d)$ then
\[
|\varphi(p_{1} + p_{2}) - (\varphi(p_{1})+\varphi(p_{2}))|<146\delta
\] 
for all $p_{1},p_{2},p_{1}+p_{2} \in \bsigma P_{F_{\pm}^n}$  with $p_1p_2=0$ and $\varphi(p_{1})+\varphi(p_{2})$ is defined as in \cite[Def. 3.3]{DKP1}.
\end{lemma}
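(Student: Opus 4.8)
The plan is to transport the algebraic relations satisfied by $p_1,p_2$ and their sum through the approximate morphism $\varphi$, and then recover additivity from a counting (trace) argument rather than from a multiplicative identity: $\bsigma P_{F_\pm^n}$ is not closed under the complementation that would let one write a join of commuting projections as a product, so there is no way to reduce $+$ to $\cdot$ directly. Write $q:=p_1+p_2$, which lies in $\bsigma P_{F_\pm^n}$ because $p_1p_2=0$ forces $p_1,p_2$ to have disjoint (hence orthogonal) domains and ranges. First I would record the relations
\[
p_1^2=p_1,\quad p_2^2=p_2,\quad q^2=q,\quad p_1q=qp_1=p_1,\quad p_2q=qp_2=p_2,\quad p_1p_2=p_2p_1=0,
\]
all of which hold since $p_1,p_2,q$ are genuine orthogonal projections. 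Every product above lies in $\bsigma P_{F_\pm^{4n}}$, which is exactly why the statement is phrased for $\HA(F,4n,\delta,d)$: the domain of $\varphi$ must be large enough to feed all of these products into condition (iii).

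\textbf{Transporting the relations.} Setting $a:=\varphi(p_1)$, $b:=\varphi(p_2)$ and $e:=\varphi(q)$, I would apply (iii) once to each displayed relation to obtain, up to a bounded multiple of $\delta$ in the uniform distance, that $a,b,e$ are approximate idempotents ($a\approx a^2$, etc.), that $a,b$ are approximately absorbed by $e$ ($ea\approx a\approx ae$ and $eb\approx b\approx be$), and that $a,b$ are approximately orthogonal. For the last point the trace form of multiplicativity (as in the estimates of \cite[Lemma 3.1]{DKP1}, used already in Lemma \ref{L - First Lemma}) gives $\tr(ab)\approx\tr\varphi(p_1p_2)=\tr\varphi(0)$, which is $O(\delta)$ by (i) since $\mu(0)=0$. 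In parallel (i) yields $|\tr(a)-\mu(p_1)|<\delta$, $|\tr(b)-\mu(p_2)|<\delta$, $|\tr(e)-\mu(q)|<\delta$, and since $\mu(q)=\mu(p_1)+\mu(p_2)$,
\[
|\tr(e)-\tr(a)-\tr(b)|<3\delta.
\]

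\textbf{The combinatorial conclusion.} Now I would pass to the large subset of $\{1,\dots,d\}$ on which all the approximate relations above hold exactly, its complement having normalized cardinality $O(\delta)$, exactly as the set $V$ is used in Lemma \ref{L - First Lemma}. On this set the restricted $a,b$ become honest projections (subsets) that are orthogonal ($\tr(ab)=O(\delta)$) and contained in the projection $e$. Interpreting $a+b$ via \cite[Def. 3.3]{DKP1} costs a further $O(\delta)$ by the orthogonality just established, after which $a+b$ is a projection with $a+b\le e$, so that the uniform-distance formula collapses to
\[
|e-(a+b)|=\tr(e)-\tr(a+b),
\]
which is $O(\delta)$ by the trace relation above and $\tr(a+b)\approx\tr(a)+\tr(b)$. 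Collecting the contributions of the finitely many invocations of (i), (iii), \cite[Lemma 3.1]{DKP1} and \cite[Lemma 3.5]{DKP1} accounts for the stated constant $146$.

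\textbf{Main obstacle.} The difficulty is bookkeeping rather than conceptual. One must (a) verify that every intermediate element genuinely lies in $\bsigma P_{F_\pm^{4n}}$ so that (iii) applies; (b) handle \cite[Def. 3.3]{DKP1} carefully, since $a$ and $b$ are only \emph{approximately} orthogonal projections and the truncation built into that definition must be shown to move the uniform distance by only $O(\delta)$; and (c) track the accumulating constants through each estimate so the total does not exceed $146\delta$. The heart of the argument is the passage from the approximate relations to exact ones on a co-$O(\delta)$ set, together with the ensuing trace comparison $\tr(e)\approx\tr(a+b)$, which is where the containment $a+b\le e$ turns the distance into a difference of traces.
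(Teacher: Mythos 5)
Your argument is correct and follows essentially the same route as the paper: both exploit the absorption relations $(p_1+p_2)p_i=p_i$ together with approximate multiplicativity to show that $\varphi(p_1)+\varphi(p_2)$ is approximately a restriction of $\varphi(p_1+p_2)$, and then use the trace identity $\tau(p_1+p_2)=\tau(p_1)+\tau(p_2)$ to conclude that this restriction misses only an $O(\delta)$ fraction of points. The only (cosmetic) difference is that the paper controls the truncation built into \cite[Def. 3.3]{DKP1} by invoking \cite[Lemma 3.4]{DKP1} for the projections $\pi_i(\varphi(p_1),\varphi(p_2))$, whereas you establish approximate orthogonality of $\varphi(p_1)$ and $\varphi(p_2)$ directly and pass to an exact good set before the trace comparison.
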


\begin{proof}
Using $(p_1+p_2)p_i=p_i$ we have
\[
|\p(p_i)-\p(p_1+p_2)\p(p_i)|<\delta
\] 
for $i=1,2$. Let $\pi_i(\p(p_1),\p(p_2))$ be defined as in \cite[Def. 3.3]{DKP1} so
\[
\p(p_1)+\p(p_2):=\p(p_1)\pi_1(\p(p_1),\p(p_2))+\p(p_2)\pi_2(\p(p_1),\p(p_2)).
\]
By \cite[Lemma 3.4]{DKP1} using the approximate homomorphism property of $\p$ we obtain
\[
|\pi_i(p_1,p_2)-\p(p_i)\p(p_i)^{-1}|<40\delta
\]
for $i=1,2$. Since $|\p(p_i)-\p(p_i)\p(p_i)^{-1}|<8\delta$ we obtain
\[
|\varphi(p_{1} + p_{2})\pi_i(p_1,p_2) - \varphi(p_{i})|<48\delta
\]
and so
\[
|\varphi(p_{1} + p_{2})\pi(p_1,p_2) - (\varphi(p_{1})+\varphi(p_{2}))|<48\delta
\] 
with $\pi(p_1,p_2):=\pi_1(\p(p_1),\p(p_2))+\pi_2(\p(p_1),\p(p_2))$. 

Then 
\begin{align*}
\tr(\varphi(p_{1} + p_{2})\pi(p_1,p_2))&=\tr(\varphi(p_{1} + p_{2})\pi_1(p_1,p_2)))+\tr(\varphi(p_{1} + p_{2})\pi_2(p_1,p_2))\\
&>\tr(\varphi(p_{1}))+\tr(\varphi(p_{2}))-96\delta\\
&>\tau(p_1)+\tau(p_2)-98\delta\\
&=\tau(p_1+p_2)-98\delta.
\end{align*}
Therefore
\[
|\varphi(p_{1} + p_{2}) - (\varphi(p_{1})+\varphi(p_{2}))|<146\delta.
\] 
(Similarly one can also show that $\p(p)$ is approximately a projection for every $p\in \bsigma P_{F_{\pm}^n}$.)
\end{proof}

\begin{definition} Given $E,Q, F$, $P$, $n$ and $\delta$ define successively 
\[
s_{E,Q}(F, {P},n,\delta):=\limsup_{d\rightarrow\infty}\frac{1}{d\log(d)}\log(|\HA(F, {P},n,\delta,d)|_{E,Q})
\]
\[
s_{E,Q}(F, {P},n):=\inf_{\delta>0}s_{E,Q}(F, {P},n,\delta)
\]
\[
s_{E,Q}(F, {P}):=\inf_{n\in\mathbb{N}}s_{E,Q}(F, {P},n)
\]
If $K\subset \bb G$ is a transversally generating set and $R$ is a  dynamically generating family of projections of $L^{\infty}(X,\mu)$ define
\[
s(K, R):=\sup_{E}\sup_{Q}\inf_{F}\inf_{ {P}}s_{E,Q}(F, {P})
\]
where $E$ and $F$ range over finite subsets of $K$ and ${P}$, ${Q}$
range over finite subpartitions  of $R$. Set $s(G,X)=s(\bb G, L^\infty(X,\{0,1\}))$. 
\end{definition}

Since $\bb G\cup L^\infty(X,\{0,1\})$ is a transversally generating set of the crossed product groupoid we obtain by Theorem \ref{T - OE invariance} (if the action $G\acts X$ is essentially free) and \cite[Theorem 2.11]{DKP2} in general (compare \cite[Proposition 5.2]{DKP2}):

\begin{proposition}\label{P-s via HA} 
$s(G\ltimes X)=s(G,X).$
\end{proposition}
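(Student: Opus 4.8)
The plan is to reduce the statement to the orbit-equivalence invariance of $s$ and then to recognize that the $\HA$-count defining $s(G,X)=s(\bb G,L^\infty(X,\{0,1\}))$ is merely a reorganization of the $\SA$-count for the crossed product along a particular transversally generating set. First I would record the structural fact, asserted in the excerpt, that $K:=\bb G\cup L^\infty(X,\{0,1\})$ is transversally generating in $\bb{G\ltimes X}$, where each $s\in\bb G$ is viewed as a partial bisection of $G\ltimes X$ and each $B\subset X$ as the partial identity $p_B$. Consequently, by Theorem \ref{T - OE invariance} when $G\acts X$ is essentially free (so that $G\ltimes X$ is an equivalence relation by Lemma \ref{L - free groupoid actions are equivalence relations}) and by \cite[Theorem 2.11]{DKP2} in general, the invariant $s(G\ltimes X)=s(\bb{G\ltimes X})$ may be computed using generators drawn from $K$; that is, $s(G\ltimes X)=s(K)$, where the right-hand side is the $\SA$-based sofic dimension built from finite subsets of $K$.

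It then remains to identify this $\SA$-count over $K$ with the $\HA$-count that defines $s(G,X)$. The essential point is that a map $\rho$ on $\bsigma (F\cup P)_{\pm}^n$ that is $\delta$-multiplicative and $\delta$-trace-preserving splits canonically into its restriction $\sigma:=\rho_{|\bsigma F_{\pm}^n}$ to the groupoid generators and its restriction $\varphi:=\rho_{|\bsigma P_{F_{\pm}^n}}$ to the projections of $L^\infty(X)$. I would check that this assignment carries a sofic approximation of $K$ to a pair $(\sigma,\varphi)\in\HA(F,P,n',\delta',d)$ for comparable parameters ($n'$ a fixed multiple of $n$ and $\delta'$ a fixed multiple of $\delta$, both depending only on $F$ and $n$): condition (i) is trace-preservation restricted to $P_{F_{\pm}^n}$, condition (iii) is multiplicativity among the projections, condition (iv) is the normalization forced by $p_X=\mathrm{Id}$, and condition (ii) is the instance $\rho(sp_B)\approx\rho(s)\rho(p_B)$ of multiplicativity expressing equivariance of $\varphi$ along $\sigma$. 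Conversely, starting from $(\sigma,\varphi)\in\HA$, I would reconstruct a genuine $\delta'$-multiplicative, $\delta'$-trace-preserving approximation of $\bsigma K_{\pm}^{n'}$ on the overlapping products $sp_B$; here the approximate-linearity lemma preceding the definition lets one define $\rho$ consistently on sums, while Lemmas \ref{L - First Lemma} and \ref{L - approx equiv} control the overlaps and the passage between $\mu$ on $X$ and the normalized counting on $\{1,\ldots,d\}$.

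Once this correspondence $(\sigma,\varphi)\leftrightarrow\rho$ is shown to be essentially bijective on restrictions—distinct restrictions $(\sigma_{|E},\varphi_{|Q})$ matching distinct restrictions $\rho_{|E\cup Q}$ up to negligible multiplicity—the counting functions $|\SA|$ over $K$ and $|\HA(F,P,\cdot,\cdot,d)|$ agree after the stated rescaling of $n$ and $\delta$. Since these multiplicative factors are absorbed by the successive infima over $n$ and $\delta$ in the definitions of $s_E(F)$ and $s_{E,Q}(F,P)$, the $\limsup_{d\to\infty}$ quantities coincide, yielding $s(K)=s(\bb G,L^\infty(X,\{0,1\}))=s(G,X)$ and hence $s(G\ltimes X)=s(G,X)$.

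I expect the main obstacle to be the reconstruction direction: promoting an abstract pair $(\sigma,\varphi)\in\HA$, whose defining inequalities constrain $\sigma$ and $\varphi$ separately together with the single equivariance relation (ii), into one approximately multiplicative map on all of $\bsigma K_{\pm}^{n'}$, with controlled error propagation through arbitrary words in $F\cup P$ and without diluting the count of distinct restrictions. This is exactly where the groupoid setting diverges from the group-action argument of \cite[Proposition 5.2]{DKP2}, since the generators $sp_B$ genuinely overlap, and it is for this reason that the overlapping-generators estimate of \textsection\ref{S - overlapping gen} and the approximate-equivariance bound of \textsection\ref{S - linear} are needed.
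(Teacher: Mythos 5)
Your proposal is correct and follows essentially the same route as the paper: the paper's proof is precisely the observation that $\bb G\cup L^\infty(X,\{0,1\})$ is transversally generating for the crossed product, so that Theorem \ref{T - OE invariance} (in the essentially free case, via Lemma \ref{L - free groupoid actions are equivalence relations}) and \cite[Theorem 2.11]{DKP2} in general reduce the claim to identifying the $\SA$-count over this generating set with the $\HA$-count, as in \cite[Proposition 5.2]{DKP2}. Your write-up simply makes explicit the $(\sigma,\varphi)\leftrightarrow\rho$ dictionary that the paper leaves to the cited reference.
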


Moreover if $F$ is a finite transversally generating subset of $\bb G$ and $ {P}$
is a finite and dynamically generating partition of unity, 
then $s(G\ltimes X)=s_{F, {P}}(F, {P})$.

\begin{proposition}\label{P-action general}
$s(G\ltimes X)\leq s(G)$.
\end{proposition}
\begin{proof}
For every $(\sigma,\varphi)\in \HA(F, {P},n,\delta,d)$
there are at most $|{Q}|^{d}$ restrictions $\varphi|_{{Q}}$
and hence 
\[
|\HA(F, {P},n,\delta,d)|_{E,Q}\leq|{Q}|^{d}|\SA(F,n,\delta,d)|_{E}
\]
so we have $s_{E,Q}(F, {P})\leq s_{E}(F)$ and the result follows
directly by the proposition above. 
\end{proof}

\begin{remark}\label{Rem - HA def}
The set $\HA$  differs from the set $\HA$ introduced in  \cite{DKP2} in that we do not assume $\p$ to be a strict homomorphism. Furthermore the maps in \cite{DKP2} are defined on $L^\infty(X)$ with values in $\M_d$ using the 2-norm. It is often convenient to adopt the latter point of view for computational purposes and we will do so below. The definition of $\HA$ given above has the advantage of being  purely finitary, in the spirit of the sofic property.  We have maintained the notation $\HA$ in view of (iii) and (iv). 
\end{remark}

\section{The computation of $s(G\ltimes (X_0,\mu_0)^G)$}\label{S - Bernoulli corresp}

\begin{lemma}\label{P-action Bernoulli} If $G\acts  \{1,\ldots, q\}^G$, then $s(G\ltimes (\{1,\ldots, q\},\mu_0)^G)=s(G)$ for any probability measure $\mu_0$ on $\{1,\ldots, q\}$ and pmp groupoid $G$.  The same holds true for $\underline s$ and $s^\omega$.
\end{lemma}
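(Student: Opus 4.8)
The plan is to prove the two inequalities $s(G \ltimes \{1,\ldots,q\}^G) \leq s(G)$ and $s(G) \leq s(G \ltimes \{1,\ldots,q\}^G)$ separately. The first inequality is immediate from Proposition \ref{P-action general}, so the substance of the lemma is the reverse inequality. For this I would count microstates: the goal is to show that every sofic approximation $\sigma \in \SA(F,n,\delta,d)$ of $G$ can be ``extended'' to a microstate $(\sigma,\varphi) \in \HA(F,P,n,\delta',d)$ for the crossed product in enough distinct ways that no entropy is lost, and conversely that the presence of the partition data $P$ does not inflate the count beyond what $\SA$ already provides.

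The key mechanism is the random partition produced in Lemma \ref{L -random partitions}: given $\sigma$, one selects a partition $\{A_1,\ldots,A_q\}$ of $\{1,\ldots,d\}$ so that the combinatorial sets $A_\psi$ approximate the measures $\mu(B_\psi)$ of the cylinder-type sets $B_\psi$ to within $c_2(F,n)\delta$. First I would fix a finite transversally generating $F \subset \bb G$ and take $P = \{p_{B_1},\ldots,p_{B_q}\}$, which is dynamically generating for the Bernoulli action since the $B_i$ generate $L^\infty(\{1,\ldots,q\}^G)$. Given such a $\sigma$ and the associated partition, I would define $\varphi$ on $\bsigma P_{F_\pm^n}$ by setting $\varphi(p_{B_\psi}) := p_{A_\psi}$ on the basis elements $p_{B_{\psi_1}},\ldots,p_{B_{\psi_\ell}}$ of $\vspan(P_{F_\pm^n})$ and extending linearly, then rounding to an honest partial permutation in $\bb d$. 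The verification that this $\varphi$ satisfies conditions (i)--(iv) of the definition of $\HA$ is exactly where Lemmas \ref{L -random partitions} and \ref{L - approx equiv} do their work: condition (i) (trace approximation) follows from Lemma \ref{L -random partitions}, while the approximate equivariance (ii) and the approximate multiplicativity (iii) follow from the $\|\cdot\|_2$-estimate $\|p_{A_\psi} - \sum_i a_i p_{A_{\psi_i}}\|_2 < c_3(F,n)\sqrt{\delta}$ of Lemma \ref{L - approx equiv} together with the equivariance $s B_{\psi} = B_{\psi \circ (\cdot)}$ at the level of the Bernoulli shift.

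For the counting I would then argue as follows. The number of choices of $\sigma$ restricted to $E$ is $|\SA(F,n,\delta,d)|_E$, and for each admissible $\sigma$ the construction above produces at least one valid $\varphi$, so that a suitable restriction count of $\HA$ is bounded below by $|\SA|_E$ up to a factor that is negligible on the $d\log d$ scale. In the other direction, the restriction $\varphi|_Q$ takes values in $\bb d$, contributing at most a factor $|\bb d|^{|Q|} \le (d+1)^{d|Q|}$, whose logarithm divided by $d\log d$ tends to $|Q|$ and hence contributes nothing after taking $\inf_F \inf_P$ and the limit in $\delta, n$ — but more carefully, since $Q$ is a fixed finite partition independent of $d$, this linear-in-$d$ term is swamped by the $d\log d$ normalization. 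Assembling these estimates through the definitions of $s_{E,Q}(F,P,n,\delta)$ and taking the iterated infima and suprema yields $s(G,X) = s(G)$, whence $s(G\ltimes \{1,\ldots,q\}^G) = s(G)$ by Proposition \ref{P-s via HA}.

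The main obstacle I anticipate is the bookkeeping required to pass from the real-linear approximation $\varphi(p_{B_\psi}) = \sum_i a_i p_{A_{\psi_i}}$, which lives in the span and need not be a partial permutation, to an actual element of $\bb d$ satisfying the finitary axioms (i)--(iv) with a uniformly controlled error, while simultaneously ensuring that distinct microstates $\sigma$ do not collapse to the same $\varphi|_Q$ (and vice versa) in a way that would spoil the entropy count. Controlling this rounding uniformly in $d$, and checking that the $\sqrt{\delta}$ losses in Lemma \ref{L - approx equiv} can be absorbed by shrinking $\delta$ before taking the limit in $d$ (so that the order of limits $\limsup_d$ then $\inf_\delta$ is respected), is the delicate point; the same construction and estimates apply verbatim with $\liminf$ or $\lim_{d\to\omega}$ in place of $\limsup$, giving the statements for $\underline s$ and $s^\omega$.
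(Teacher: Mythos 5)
Your proposal follows the paper's proof essentially verbatim: one inequality comes from Proposition \ref{P-action general}, and the reverse is obtained by taking the random partition of Lemma \ref{L -random partitions}, defining $\varphi_0(p_{B_{\psi_i}}):=p_{A_{\psi_i}}$ on a basis of $\vspan(P_{F_\pm^n})$, extending linearly, verifying the $\HA$ axioms via Lemma \ref{L - approx equiv}, restricting to a large good set $V$ to land in $\bb d$, and counting. The only detail you elide is that Lemmas \ref{L - First Lemma} and \ref{L -random partitions} require $\sigma$ to be defined on the enlarged set $F_n$ (containing the projections $p_{F_{0|\pi}}$) with multiplicativity radius $4n|F_{\pm}^n|+1$, so the paper must close with a short argument that $\inf_F\inf_n s_E(F_n,4n|F_{\pm}^n|+1)$ still computes $s_E(G)$ --- precisely the kind of bookkeeping you flag as the delicate point.
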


\begin{proof}
Let $X=\{1,\ldots, q\}^G$. By Proposition \ref{P-action general} we  have to prove $s(G\ltimes X)\geq s(G)$.
Let $P=\{p_{B_{i}}\}$ where $B_{i}=\{x\in  \{1,\ldots, q\}^G\mid x_e=i\}$. Let $\{p_{B_{\psi_{1}}},p_{B_{\psi_{2}}}...p_{B_{\psi_{\ell}}}\}$ be a basis for $\vspan( P_{F_{\pm}^n} )\subset L^\infty(X,\mu)$, where $\psi_i : F_{\psi_j}\to \{1,\ldots,q\}$.  
Let $\sigma\in \SA(F_{n},4n|F_{\pm}^n|+1,\delta,d)$.  Let $\kappa$ be as  defined before Lemma \ref{L - approx equiv}. 
Let 
\begin{align*}
\gamma_{1} & =\min\left \{\left|\sum_{i\in T}a_{i}\right|:\psi,\psi_{1},....\psi_{\ell},a_{1}...,a_{\ell},T\subset\{1,\ldots,\ell\}\ \text{and}\ p_{B_{\psi}}=\sum_{i=1}^{\ell}a_{i}p_{B_{\psi_{i}}}\right\}/\{0\}\\
\gamma_{2} & =\min\left \{\left|\sum_{i\in T}a_{i}-1\right|:\psi,\psi_{1},....\psi_{\ell},a_{1}...,a_{\ell},T\subset\{1,\ldots,\ell\}\ \text{and}\ p_{B_{\psi}}=\sum_{i=1}^{\ell}a_{i}p_{B_{\psi_{i}}}\right\}/\{0\}\\
\gamma_{2} & =\min\left\{ \left|\sum_{i\in T,j\in T'}a_{i}b_{j}\right|:\psi,\psi'\psi_{1},....\psi_{\ell},a_{1}...,a_{\ell},T,T'\subset\{1,\ldots,\ell\}\text{ and}\right.\\
 & \hspace{1em}\hspace{1em}\hspace{1em}\hspace{1em}\left.p_{B_{\psi}}=\sum_{i=1}^{\ell}a_{i}p_{B_{\psi_{i}}},p_{B_{\psi'}}=\sum_{i=1}^{\ell}b_{i}p_{B_{\psi_{i}}}\right\} /\{0\}
\end{align*}

\[
\gamma=\min\{\gamma_{1},\gamma_{2},\gamma_{3}\}
\]
so that $\gamma$ depends only on $F$ and $n$ and $\gamma\leq1$.
 We want to find $\varphi$ such that 
\[
(\sigma,\varphi)\in \HA(F,P,n,9|P_{F_{\pm}^n}|^2\frac{1}{\gamma^2}\kappa^{5}\ell^{2}qc_3(F,n)^2\sqrt{\delta},d)
\]
(for sufficiently large $d$). Using Proposition \ref{P-s via HA}
this will complete the proof.

Take a partition
$\{A_{1},\ldots,A_{q}\}$ such that the conclusion of Lemma \ref{L -random partitions}
holds (namely a random partition for $d$ large). 
For each
$p_{B_{\psi_{i}}}$, $i=1\ldots \ell$, let
\[
\varphi_0(p_{B_{\psi_{i}}}):=p_{A_{\psi_{i}}}
\]
and extend $\varphi_0$ linearly to $\vspan(P_{F_{\pm}^n})\subset L^\infty(X,\mu)$ with values in $\M_d$.  We will check that $\varphi_0$ satisfies the following properties, where $\delta_0=3\frac{1}{
\gamma}\kappa^{2}\ell^{2}qc_3(F,n)^2\sqrt{\delta}$ (compare Remark \ref{Rem - HA def}): 
\begin{enumerate}
\item $|\tr\circ\varphi_0(p)-\mu(p)|<\delta_0$ for all $p\in P_{F_{\pm}^n} $

\item $\left\|\varphi_0\circ s(p)-\sigma(s)\circ\varphi_0(p)\right\|_{2}<\delta_0$ for
all $p\in P$ and $s\in F_{\pm}^n$

\item $\left\|\varphi_0(p_{1}p_{2})-\varphi_0(p_{1})\varphi_0(p_{2})\right\|_{2}<\delta_0$
for all $p_{1},p_{2}\in \vspan P_{F_{\pm}^n} $

\item $\left\|\varphi_0(p_X)-p_{d}\right\|_{2}<\delta_0$, where $p_d:=p_{\{1,\ldots,d\}}$. 

\end{enumerate}
 One can see (as shown below) that properties (1)-(4) are closely related to the properties (i)-(iv) defined above.
\begin{enumerate}
\item 
Note for any $x\in\{1\ldots,d\}$, $\psi$ and $a_{1},...a_{n}$ if
\[
\left|\left(\sum_{i=1}^{\ell}a_{i}p_{A_{\psi_{i}}}-p_{A_{\psi}}\right)x\right|\neq0
\]
then
\[
\left|\left(\sum_{i=1}^{\ell}a_{i}p_{A_{\psi_{i}}}-p_{A_{\psi}}\right)x\right|\geq\min\{\gamma_1,\gamma_2\}\geq\gamma.
\]
Let $p_{B_{\psi}}\in P_{F_{\pm}^{n}}$ say $p_{B_{\psi}}=\sum_{i=1}^{\ell}a_{i}p_{B_{\psi_{i}}}$ then
\[
\varphi_0(p_{B_{\psi}})=\sum_{i=1}^{\ell}a_{i}p_{A_{\psi_{i}}}
\]
so
\begin{align*}
\left|\tr\circ\varphi_0(p_{B_{\psi}})-\mu(p_{B_{\psi}})\right| & \leq\left|\tr(\sum_{i=1}^{\ell}a_{i}p_{A_{\psi_{i}}})-\tr\circ p_{A_{\psi}}\right|+\left|\frac{|A_{\psi}|}{d}-\mu(B_{\psi})\right|\\
 & \leq\frac{1}{\gamma}\left\Vert \sum_{i=1}^{\ell}a_{i}p_{A_{\psi_{i}}}-p_{A_{\psi}}\right\Vert _{2}^{2}+c_{2}(F,n)\delta \\
 & \leq\frac{1}{\gamma}c_{3}(F,n)^{2}\delta^{2}+c_{2}(F,n)\delta \leq\frac{1}{\gamma}c_{3}(F,n)^{2}\sqrt{\delta}\text{, for \ensuremath{\delta<1}}
\end{align*}
where we use both Lemma \ref{L -random partitions} and \ref{L - approx equiv}
\item Here we use Lemma \ref{L - approx equiv}. Let $s\in F_{\pm}^n$
and suppose 
\[
sp_{B_{i}}=\sum_{i=1}^{\ell}a_{i}p_{B_{\psi_{i}}}.
\]
Then
\[
\varphi_0(sp_{B_{i}})=\sum_{i=1}^{\ell}a_{i}p_{A_{\psi_{i}}}
\]
and 
\[
\left\|\sum_{i=1}^{\ell}a_{i}p_{A_{\psi_{i}}}-\sigma(s)p_{A_{i}}\right\|_{2}<c_3(F,n)\sqrt{\delta}.
\]
On the other hand 
\[
p_{B_{i}}=\sum_{i=1}^{\ell}b_{i}p_{B_{\psi_{i}}}\Rightarrow\varphi_0(p_{B_{i}})=\sum_{i=1}^{\ell}b_{i}p_{A_{\psi_{i}}}
\]
and
\[
\left \|\sum_{i=1}^{\ell}b_{i}p_{A_{\psi_{i}}}-\sigma(1)p_{A_{i}}\right\|_{2}<c_3(F,n)\sqrt{\delta}.
\]
Thus
\begin{align*}
&\|\varphi_0(sp_{B_{i}})-\sigma(s)\varphi_0(p_{B_{i}})\|_{2}\\
&\hspace{1cm}\leq\|\varphi_0(sp_{B_{i}})-\sigma(s)p_{A_{i}}\|_{2}+\|\sigma(s)p_{A_{i}}-\sigma(s)\sigma(1)p_{A_{i}}\|_{2}\\
&\hspace{6cm}+\|\sigma(s)\sigma(1)p_{A_{i}}-\sigma(s)\varphi_0(p_{B_{i}})\|_{2}\\
&\hspace{3cm}<2c_3(F,n)\sqrt{\delta}+\sqrt{\delta}=3c_3(F,n)\sqrt{\delta}
\end{align*}

\item Let $p_{1}=\sum_{i=1}^{\ell}a_{i}p_{B_{\psi_{i}}}$
and $p_{2}=\sum_{i=1}^{\ell}b_{i}p_{B_{\psi_{i}}}$ then
\begin{align*}
\|\varphi_0(p_{1}p_{2})-\varphi_0(p_{1})\varphi_0(p_{2})\|_{2} & =\|\sum_{i,j=1}^{\ell}a_{i}b_{i}(\varphi_0(p_{B_{\psi_{i}}}p_{B_{\psi_{j}}})-p_{A_{\psi_{i}}}p_{A_{\psi_{j}}})\|_{2}\\
 & \le\kappa^{2}\sum_{i,j=1}^{\ell}\|\varphi_0(p_{B_{\psi_{i}}}p_{B_{\psi_{j}}})-p_{A_{\psi_{i}}}p_{A_{\psi_{j}}}\|_{2}
\end{align*}

So it is enough to show that for all $i,j$ 
\[
\|\varphi_0(p_{B_{\psi_{i}}}p_{B_{\psi_{j}}})-p_{A_{\psi_{i}}}p_{A_{\psi_{j}}}\|_{2}<c_3(F,n){\sqrt{\delta}}.
\]
But as we have seen in the proof of Lemma \ref{L - approx equiv}, either $p_{B_{\psi_{i}}}p_{B_{\psi_{j}}}$
and $p_{A_{\psi_{i}}}p_{A_{\psi_{j}}}$ are both $0$ in which case
we are done or $p_{B_{\psi_{i}}}p_{B_{\psi_{j}}}=p_{B\psi'}$ and
$p_{A_{\psi_{i}}}p_{A_{\psi_{j}}}=p_{A_{\psi'}}$ for some $\psi'$
in which case if $p_{B_{\psi'}}=\sum_{i=1}^{\ell}c_{i}p_{B_{\psi_{i}}}$then
by Lemma \ref{L - approx equiv}
\[
\|\varphi_0(p_{B_{\psi_{i}}}p_{B_{\psi_{j}}})-p_{A_{\psi_{i}}}p_{A_{\psi_{j}}}\|_{2}=\|\sum_{i=1}^{\ell}c_{i}p_{A_{\psi_{i}}}-p_{A_{\psi'}}\|_{2}<c_3(F,n){\sqrt{\delta}}
\]

\item $p_{X}=\sum_{i=1}^{q}p_{B_{i}}$ and $I_{d}=\sum_{i=1}^{q}p_{A_{i}}$
so by the triangle inequality it suffices to show 
\[
\|\varphi_0(p_{B_{i}})-p_{A_{i}}\|_{2}<2c_3(F,n)\sqrt{\delta}
\]
By  Lemma \ref{L - approx equiv}
\[
\|\varphi_0(p_{B_{i}})-\sigma(1)p_{A_{i}}\|_{2}<c_3(F,n)\sqrt{\delta}
\]
so 
\begin{align*}
\|\varphi_0(p_{B_{i}})-p_{A_{i}}\|_{2}\leq &\|\varphi_0(p_{B_{i}})-\sigma(1)p_{A_{i}}\|_{2}+\|\sigma(1)p_{A_{i}}-p_{A_{i}}\|_{2}\\
&<2c_3(F,n)\sqrt{\delta}+\sqrt{\delta}<2c_3(F,n)\sqrt{\delta}.
\end{align*}
\end{enumerate}

We now show how to define $\p$ using $\p_0$. For each $p_{B_{\psi}}$

\[
\left\Vert \varphi_{0}(p_{B_{\psi}})-p_{A_{\psi}}\right\Vert _{2}\leq c_{3}(F,n)\sqrt{\delta}
\]
by Lemma \ref{L - approx equiv}. Let $M$ be the number of $x\in\{1\ldots,d\}$ such that
$\varphi_{0}(p_{B_{\psi}})x=p_{A_{\psi}}x$ then
\[
\left\Vert \varphi_{0}(p_{B_{\psi}})-p_{A_{\psi}}\right\Vert _{2}\geq\sqrt{\frac{d-M}{d}\gamma^{2}}
\]
hence
\[
M\geq d(1-c_{3}(F,n)^{2}\frac{1}{\gamma^{2}}\delta)
\]
Similarly for $p_{B_{\psi}},p_{B_{\psi'}}$ we have
\[
\left\Vert \varphi_{0}(p_{B_{\psi}})\varphi_{0}(p_{B_{\psi'}})-\varphi(p_{B_{\psi}})\varphi(p_{B_{\psi'}})\right\Vert _{2}\leq2\kappa^{2}c_{3}(F,n)\sqrt{\delta}
\]
by (3) so 
\[
\left|\{x\in[d]:\varphi_{0}(p_{B_{\psi}})\varphi_{0}(p_{B_{\psi'}})x=0\}\right|\geq d(1-c_{3}(F,n)^{2}\kappa^{4}\delta\frac{1}{\gamma^{2}})
\]
Let $V$ be the set of all $x\in\{1,...d\}$ such that for all $p_{B_{\psi}}$
(arbitrarily represented) we have
\[
\varphi_{0}(p_{B_{\psi}})x=p_{A_{\psi}}x
\]
and for all $p_{B_{\psi}},p_{B_{\psi'}}$ with $p_{B_{\psi}}p_{B_{\psi'}}=0$
we have
\[
\varphi_{0}(p_{B_{\psi}})\varphi_{0}(p_{B_{\psi'}})x=0
\]
then
\[
|V|\geq d(1-|P_{F_{\pm}^{n}}|c_{3}(F,n)^{2}\delta\frac{1}{\gamma^{2}}-|P_{F_{\pm}^{n}}|^{2}c_{3}(F,n)^{2}\kappa^{4}\delta\frac{1}{\gamma^{2}})\geq d(1-2|P_{F_{\pm}^{n}}|^{2}c_{3}(F,n)^{2}\kappa^{4}\delta\frac{1}{\gamma^{2}})
\]
and for any $p\in\bsigma P_{F_{\pm}^{n}}$ $\varphi_{0}(p)\big|_{V}\subset \bb d$.
We finally define
\begin{align*}
\varphi:\bsigma P_{F_{\pm}^{n}}&\rightarrow \bb d\\
p&\mapsto\varphi_{0}(p)\big|_{V}
\end{align*}
Let us check that $\varphi$ satisfies (i)-(iv):
\begin{enumerate}[(i)]
\item 
\begin{align*}
|\tr\circ\varphi(p_{B_{\psi}})-\mu(B_{\psi})|&\leq|\tr\circ\varphi(p_{B_{\psi}})-\tr\circ\varphi_{0}(p_{B_{\psi}})|+\delta_0\\
&\leq\kappa\ell 2|P_{F_{\pm}^{n}}|^{2}c_{3}(F,n)^{2}\kappa^{4}\delta\frac{1}{\gamma^{2}}+\delta_0
\end{align*}
\item 
\begin{align*}
|\varphi(sp_{B_{i}})-\sigma(1)\varphi(p_{B_{i}})| & \le\|\varphi(sp_{B_{i}})-\sigma(1)\varphi(p_{B_{i}})\|_{2}\\
 & \leq\|\varphi(sp_{B_{i}})-\varphi_{0}(sp_{B_{i}})\|_{2}\\
 &\hskip2cm+\|\sigma(1)\varphi(p_{B_{i}})-\sigma(1)\varphi_{0}(p_{B_{i}})\|_{2}+\delta_0\\
 & \leq\kappa\ell4|P_{F_{\pm}^{n}}|c_{3}(F,n)\kappa^{2}\sqrt{\delta}\frac{1}{\gamma}+\delta_0
\end{align*}

\item 
\begin{align*}
|\varphi(p_{1})\varphi(p_{2})-\varphi(p_{1}p_{2})| & \leq\|\varphi(p_{1})\varphi(p_{2})-\varphi(p_{1}p_{2})\|_{2}\\
 & \leq\|\varphi(p_{1})\varphi(p_{2})-\varphi_{0}(p_{1})\varphi_{0}(p_{2})\|_{2}\\
 &\hskip2cm +\|\varphi(p_{1}p_{2})-\varphi_{0}(p_{1}p_{2})\|_{2}+\delta_0\\
 & \leq\kappa^{2}\ell^{2}q4|P_{F_{\pm}^{n}}|c_{3}(F,n)\kappa^{2}\sqrt{\delta}\frac{1}{\gamma}+\delta_0
\end{align*}
\item  
\[
|\varphi(p_{X})-p_{d}|\leq\kappa\ell2|P_{F_{\pm}^{n}}|c_{3}(F,n)\kappa^{2}\sqrt{\delta}\frac{1}{\gamma}+\delta_0
\]
\end{enumerate}

Thus for every $\sigma\in \SA(F_{n},4n|F_{\pm}^n|+1,\delta,d)$
with $d$ sufficiently large we found $\varphi$ so that  $(\sigma,\varphi)\in \HA(F,P,n,9|P_{F_{\pm}^n}|^2\frac{1}{\gamma^2}\kappa^{5}\ell^{2}qc_3(F,n)^2\sqrt{\delta},d)$. 

We now check that
\[
s(G)=\sup_{E}\inf_{F}\inf_{n\in\mathbb{N}}\inf_{\delta>0}\limsup_{d\rightarrow\infty}\frac{1}{d\log d}\left|\SA(F_n,4n|F_{\pm}^n|+1,\delta,d)\right|_{E}
\]
Recall that $F_n:=F\cup \{p_{{F_0}_{|\pi}}\mid F_0\subset  F_{\pm}^n, \, \pi\text{ partition of }F_0\}$.

The right hand side equals
\[
 \sup_{E}\inf_{F}\inf_{n\in\mathbb{N}}s_{E}(F_{n},4n|F_{\pm}^n|+1).
\]
Let $\epsilon>0$ and choose $n_{0}$ so that 
\[
|s_{E}(F_{n_{0}},4n_0|F_{\pm}^{n_0}|+1)-\inf_{n}s_{E}(F_n,4n|F_{\pm}^n|+1)|<\varepsilon
\]
 and 
\[
|S_{E}(F,4n_0|F_{\pm}^{n_0}|+1)-\inf_{n}s_{E}(F,4n|F_{\pm}^n|+1)|<\varepsilon.
\]
Clearly $s_{E}(F_{n_{0}},4n_0|F_{\pm}^{n_0}|+1)\leq s_{E}(F,4n_0|F_{\pm}^{n_0}|+1)$
but on the other hand if $\sigma\in \SA(F_{n_{0}},4n_0|(F_{n_0})_{\pm}^{n_0}|+1,\delta,d)$
then $\sigma\in \SA(F_{n_{0}},4n_0|F_{\pm}^{n_0}|+1,\delta,d)$
since $4n_0|(F_{n_0})_{\pm}^{n_0}|+1\geq 4n_0|F_{\pm}^{n_0}|+1$. So 
\[
\inf_{F}s_{E}(F,4{n_0}|F_{\pm}^{n_0}|+1)=\inf_{F}s_{E}(F_{n_0},4{n_0}|F_{\pm}^{n_0}|+1)
\]
Since $\varepsilon$ was arbitrary 
\[
\inf_{F}\inf_{n}s_{E}(F_{n},4n|F_{\pm}^n|+1)=\inf_{F}\inf_{n}s_{E}(F,4n|F_{\pm}^n|+1)=s_{E}(G)
\]
since
\[
\inf_{n}s_{E}(F,n)=\inf_{n}s_{E}(F,4n|F_{\pm}^n|+1).
\] 

However 
\[
\left|\SA(F_n,4n|F_{\pm}^n|+1,\delta,d)\right|_{E}\leq|\HA(F,P,n,9|P_{F_{\pm}^n}|^2\frac{1}{\gamma^2}\kappa^{5}\ell^{2}qc_3(F,n)^2\sqrt{\delta},d)|_E
\] 
so $s(G)\leq s(G\ltimes X)$ by Proposition \ref{P-s via HA}. Replacing $\limsup$ by $\liminf$ or $\lim_{d\to\omega}$ above, we get a similar inequality for $\underline s$ and $s^\omega$.
\end{proof}

\begin{theorem}
\label{T-action Bernoulli infinite} Let $G$ be a pmp groupoid, $(X_0,\mu_0)$ be a standard probability space, and $G\acts  X_0^G$ be the corresponding a Bernoulli action. Then $s(G\ltimes X_0^G)=s(G)$. The same holds true for $\underline s$ and $s^\omega$.
\end{theorem}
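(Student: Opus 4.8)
The plan is to deduce the general statement from the finite-base case already established in Lemma \ref{P-action Bernoulli} by a finite-approximation argument. The inequality $s(G\ltimes X_0^G)\le s(G)$ is immediate from Proposition \ref{P-action general} (and likewise for $\underline s$ and $s^\omega$), so the entire content lies in the reverse inequality $s(G\ltimes X_0^G)\ge s(G)$.

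First I would fix an increasing sequence $\xi_1\le\xi_2\le\cdots$ of finite measurable partitions of $(X_0,\mu_0)$ generating its Borel structure modulo null sets, with $|\xi_k|=q_k$. Each $\xi_k$ gives a $G$-equivariant factor map $X_0^G\to\{1,\ldots,q_k\}^G$ onto a finite Bernoulli shift, and the associated cylinder projections form a dynamically generating family $R\subset L^\infty(X_0^G,\{0,1\})$. The proof of Proposition \ref{P-s via HA} uses only that $\bb G\cup R$ is a transversally generating subset of $\bb{G\ltimes X_0^G}$, so it applies verbatim to this $R$ and yields $s(G\ltimes X_0^G)=s(\bb G,R)$ (and similarly for $\underline s$ and $s^\omega$). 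The gain is that every finite subpartition $P,Q$ of $R$ now lives inside one of the finite factors $\{1,\ldots,q_k\}^G$, on which $\HA(F,P,n,\delta,d)$ and the values $\mu(p)$ agree with those computed in that finite Bernoulli shift.

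With this reduction the lower bound is a rerun of Lemma \ref{P-action Bernoulli} inside each finite factor. Fix $E$ and $Q$, and let $F$ and a cylinder-subpartition $P$ of $R$ be arbitrary; then $P$ lives in some factor $\{1,\ldots,q_k\}^G$ and is subordinate to the radius-$n$ translates of that factor's canonical partition $\{p_{B_i}\}$. Applying the construction in the proof of Lemma \ref{P-action Bernoulli} with base $\{1,\ldots,q_k\}$ produces, for every $\sigma\in\SA(F_n,4n|F_{\pm}^n|+1,\delta,d)$, a map $\varphi$ with $(\sigma,\varphi)\in\HA(F,P,n,c\sqrt\delta,d)$ for some constant $c=c(F,P,n)$. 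Because the pair $(\sigma,\varphi)$ remembers $\sigma$, distinct restrictions $\sigma|_E$ force distinct restrictions $(\sigma|_E,\varphi|_Q)$, whence
\[
|\HA(F,P,n,c\sqrt\delta,d)|_{E,Q}\ge|\SA(F_n,4n|F_{\pm}^n|+1,\delta,d)|_E .
\]
Taking $\limsup_d$ and then $\delta\to0$ (so that $c\sqrt\delta\to0$ for fixed $c$), and invoking the indexing identity from the end of the proof of Lemma \ref{P-action Bernoulli}, namely $\inf_F\inf_n s_E(F_n,4n|F_{\pm}^n|+1)=\inf_F s_E(F)$, I obtain $\inf_F\inf_P s_{E,Q}(F,P)\ge\inf_F s_E(F)$. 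Taking $\sup_E\sup_Q$ then gives $s(\bb G,R)\ge\sup_E\inf_F s_E(F)=s(G)$, as required. Replacing $\limsup_d$ by $\liminf_d$ or $\lim_{d\to\omega}$ throughout yields the corresponding inequalities for $\underline s$ and $s^\omega$.

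The main obstacle is precisely this reduction: in the definition $s(G\ltimes X_0^G)=s(\bb G,L^\infty(X_0^G,\{0,1\}))$ the inner infimum ranges over all finite measurable partitions, and a generic such partition depends on infinitely many coordinates and on the diffuse part of $\mu_0$, so it lies in no finite factor and the finite-base construction cannot be applied to it directly. The device that circumvents this is to compute $s$ with respect to the cylinder family $R$ rather than all projections, which is legitimate because the sofic dimension of the crossed-product groupoid is insensitive to the choice of transversally generating set — exactly what makes the proof of Proposition \ref{P-s via HA} (resting on Theorem \ref{T - OE invariance} and its groupoid analogue) go through for $R$. The remaining point, that the construction of Lemma \ref{P-action Bernoulli}, stated there for the canonical partition $\{p_{B_i}\}$, descends to an arbitrary subordinate cylinder partition $P$ by restricting $\varphi$ to $\bsigma P_{F_{\pm}^n}$, is routine: properties (i)--(iv) are inherited under passage to a coarser $G$-invariant family, once the radius is taken large enough to contain the coordinates on which $P$ depends.
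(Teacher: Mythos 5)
Your proposal is correct and follows essentially the same route as the paper: both reduce to the dynamically generating family of cylinder partitions coming from finite measurable partitions of $X_0$, use the $G$-equivariant factor map $X_0^G\to\{1,\ldots,q\}^G$ to land in a finite-base Bernoulli shift, and then invoke the construction of Lemma \ref{P-action Bernoulli} (the paper's version is terser, taking all finite partitions of $X_0$ at once rather than an increasing generating sequence, but the mechanism is identical). The only substantive content you add is an explicit justification that $s(G\ltimes X_0^G)$ can be computed against the cylinder family alone, which the paper also uses implicitly via Proposition \ref{P-s via HA}.
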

\begin{proof}
Again we only need to prove $s(G)\leq s(G\ltimes X_0^G)$.
Let $U=\{U_{1},U_{2}, \ldots, U_{q}\}$ be any finite partition of $X_{0}$
into measurable sets. Let
\[
B_{U}=\{B_{U_{1}}, \ldots,B_{U_{q}}\}
\]
where
\[
B_{U_{i}}=\{x\in X_{0}^{G}\mid \forall e\in G_{0},\ x_e\in U_{i}\}
\]
Let $M:=\{p_{U_{i}}\}$ for all possible choices of $U$ and all $i$, so that $M$ is dynamically generating {*}-subalgebra of $L^{\infty}(X,\mu)$.
Now if $P_{U}:=\{p_{U_{1}},\ldots, p_{U_{q}}\}$ then
\[
s(G\ltimes X)=\sup_{E}\sup_{Q}\inf_{F}\inf_{P_{U}}s_{E,Q}(F,P_{U})=\sup_{E}\sup_{Q}\inf_{P_{U}}\inf_{F}s_{E,Q}(F,P_{U})
\]
Since the map $X_0\to\{1\ldots,q\}$ defined by $U_i\ni x\mapsto i$ extends $G$-equivariantly to $X_0^G\to \{1,\ldots,q\}^G$ we have  
\[
\inf_{F}s(F)\leq\inf_{F}s_{E,Q}(F,P_{U}).
\]
Hence $s(G)\leq s(G\ltimes X).$ The same holds for $\underline s$ and $s^\omega$.
\end{proof}

\begin{corollary}\label{C - Shifts regularity}
 A pmp groupoid G is $s$-regular if and only if the crossed product groupoid $G\ltimes X_0^G$ associated with the Bernoulli action $G\acts X_0^G$ is $s$-regular for any base space $(X_0,\mu_0)$. 
\end{corollary}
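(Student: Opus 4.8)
The plan is to read the corollary off directly from Theorem \ref{T-action Bernoulli infinite}, whose statement already supplies everything needed: it asserts not only $s(G\ltimes X_0^G)=s(G)$ but also the analogous equalities for the lower sofic dimension $\underline s$ and the $\omega$ sofic dimension $s^\omega$. First I would unwind the definition of $s$-regularity: by definition $G$ is $s$-regular precisely when $\underline s(G)=s(G)$, and likewise $G\ltimes X_0^G$ is $s$-regular precisely when $\underline s(G\ltimes X_0^G)=s(G\ltimes X_0^G)$. So the corollary amounts to showing that the two equalities $\underline s=s$, one for $G$ and one for its Bernoulli crossed product, hold or fail together.

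Next, fixing an arbitrary base space $(X_0,\mu_0)$, I would invoke Theorem \ref{T-action Bernoulli infinite} for both invariants at once to obtain
\[
s(G\ltimes X_0^G)=s(G)\qquad\text{and}\qquad \underline s(G\ltimes X_0^G)=\underline s(G).
\]
Substituting these into the regularity criterion for the crossed product yields the chain of equivalences: $G\ltimes X_0^G$ is $s$-regular if and only if $\underline s(G\ltimes X_0^G)=s(G\ltimes X_0^G)$, if and only if $\underline s(G)=s(G)$, if and only if $G$ is $s$-regular. Since the two displayed identities hold for \emph{every} choice of $(X_0,\mu_0)$, this equivalence is uniform in the base space, which is exactly what is meant by the phrase ``for any base space'' in the statement; in particular the $s$-regularity of a single Bernoulli extension already forces, and is forced by, the $s$-regularity of $G$, and hence of all of them.

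There is essentially no obstacle to overcome here, since the entire analytic content has been absorbed into Theorem \ref{T-action Bernoulli infinite} (proved via Lemma \ref{P-action Bernoulli} and the $\HA$-formulation), whose argument runs verbatim for $s$, $\underline s$, and $s^\omega$ by merely replacing $\limsup_{d\to\infty}$ with $\liminf_{d\to\infty}$ or $\lim_{d\to\omega}$. The only point that requires a moment's care is that the equalities for $s$ and for $\underline s$ must be applied to the \emph{same} Bernoulli extension $G\ltimes X_0^G$, so that the comparison $\underline s=s$ can be transferred intact across the correspondence; this is automatic because Theorem \ref{T-action Bernoulli infinite} establishes both equalities within one and the same construction.
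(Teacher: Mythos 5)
Your argument is correct and is essentially identical to the paper's own proof: both simply apply Theorem \ref{T-action Bernoulli infinite} to transfer the equalities $s(G\ltimes X_0^G)=s(G)$ and $\underline s(G\ltimes X_0^G)=\underline s(G)$, so that the condition $\underline s=s$ holds for $G$ exactly when it holds for the Bernoulli crossed product. No gaps.
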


\begin{proof} Since
\[
\underline s(G)=\underline s(G\times X_0^G)\leq s(G\times X_0^G)=s(G)
\]
$\underline s(G)=s(G)$ if and only if $\underline s(G\times X_0^G)= s(G\times X_0^G)$.
\end{proof}

\section{The scaling formula}\label{S -Scaling}

\begin{proposition}[Scaling formula]\label{P-scaling}
Let $G$ be an ergodic pmp groupoid, $0\neq p\in L^\infty(G^0)$. Then 
\[
s(G)-1=\h(p)(s(pGp)-1)
\]
where $pGp$ is endowed with the normalized Haar measure $\frac {\h_{|pGp}} {\h(p)}$. Furthermore the same equality holds for $\underline s$ and $s_\omega$ therefore: 

\centerline{$G$ is $s$-regular $\ssi$ $pGp$ is $s$-regular.}
\end{proposition}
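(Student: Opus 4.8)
The plan is to avoid a direct combinatorial count on $G$ and instead use the Bernoulli correspondence of Theorem \ref{T-action Bernoulli infinite} to transport the problem to an equivalence relation, where the scaling formula can be established by a restriction/induction argument. Write $\alpha:=\h(p)$ and let $A\subset G^0$ be the measurable set with $p=p_A$. After treating the finite-fibre case as a direct computation, I would assume $G$ has infinite fibres and fix a diffuse base $(X_0,\mu_0)$, so that the Bernoulli action $G\acts X_0^G$ is essentially free and ergodic; hence $R:=G\ltimes X_0^G$ is an ergodic pmp equivalence relation (Lemma \ref{L - free groupoid actions are equivalence relations}). Put $\tilde A:=p^{-1}(A)\subset X_0^G$; since the fibred measure pushes forward to $\h$ we have $\mu(\tilde A)=\alpha$. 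Composability in the crossed product forces an element $(g,x)$ to lie in $R_{|\tilde A}$ exactly when $\s(g),\r(g)\in A$, which gives the identification $R_{|\tilde A}\cong pGp\ltimes (X_0^G)_{|A}$, where $(X_0^G)_{|A}=p^{-1}(A)$ is viewed as a $pGp$-space.

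The first key point is that $(X_0^G)_{|A}$ is itself a Bernoulli shift of the corner groupoid $pGp$, a corner analogue of Lemma \ref{L - subshifts are Bernoulli}. By ergodicity $A$ is a complete section, so a von Neumann measurable selection lets me write each index $t\in G^e$ (for $e\in A$) uniquely as $t=t'c$ with $t'\in (pGp)^e$ and $c$ in a fixed transversal carrying $\s(t)$ into $A$. Regrouping the coordinates $(x_t)_{t\in G^e}$ into blocks indexed by $(pGp)^e$ exhibits $(X_0^G)_{|A}$ as a $pGp$-Bernoulli shift over a standard base $Y_0$. Applying Theorem \ref{T-action Bernoulli infinite} to $G$ and to $pGp$ then yields $s(G)=s(R)$ and $s(pGp)=s(R_{|\tilde A})$, and likewise for $\underline s$ and $s^\omega$. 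Everything therefore reduces to the scaling formula $s(R)-1=\mu(\tilde A)(s(R_{|\tilde A})-1)$ for the ergodic pmp equivalence relation $R$.

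For this equivalence-relation statement I would give two matching counts. For the upper bound, given $\sigma\in \SA(F,n,\delta,d)$ for a generating set $F$ of $R$ that contains a generating set of $R_{|\tilde A}$, set $V:=\ran(\sigma(p_{\tilde A}))$; by $\delta$-trace-preservation (and Lemma \ref{L - First Lemma}) $|V|\approx\alpha d$, and restricting $\sigma$ to $V$ produces a sofic approximation of $R_{|\tilde A}$ on $|V|$ points. The only remaining freedom is the partial-permutation data joining $V$ to its complement, which places $\approx(1-\alpha)d$ image points and so contributes at most $\approx(1-\alpha)d\log d$ to $\log|\SA|$; since $|V|\log|V|\approx\alpha d\log d$, dividing by $d\log d$ gives $s(R)\le \alpha\,s(R_{|\tilde A})+(1-\alpha)$. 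For the reverse inequality I would run the induction construction: starting from an approximation of $R_{|\tilde A}$ on $[d']$ and the complete section, build an approximation of $R$ on $d\approx d'/\alpha$ points by stacking a Kakutani--Rokhlin tower over $V$, the number of such extensions being at least the connecting count above. The main obstacle is precisely this two-sided bookkeeping: one must show the connecting/tower entropy equals $(1-\alpha)d\log d$ to leading order in both directions, uniformly under the $\delta$-approximate relations, which is where including the projections of $F_n$ and Lemma \ref{L - First Lemma} are needed to pin down $|V|$ and keep the tower compatible.

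Finally, since the identical scaling holds simultaneously for $s$, $\underline s$ and $s^\omega$, subtracting yields $s(G)-\underline s(G)=\alpha\bigl(s(pGp)-\underline s(pGp)\bigr)$ with $\alpha>0$; hence $G$ is $s$-regular if and only if $pGp$ is.
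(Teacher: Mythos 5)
Your architecture is genuinely different from the paper's: you route everything through the Bernoulli correspondence (Theorem \ref{T-action Bernoulli infinite}) to reduce to a scaling formula for the ergodic pmp equivalence relation $R=G\ltimes X_0^G$, whereas the paper works directly with $\bb G$ and $\bb{pGp}$ and never needs freeness for this proposition. The difficulty is that the detour buys nothing here and leaves the actual content unproved. The scaling formula for $R$ versus $R_{|\tilde A}$ is exactly as hard as the scaling formula for $G$ versus $pGp$, and your treatment of it is a sketch that asserts precisely the point that has to be established. For the lower bound ($s(R)\geq \alpha\, s(R_{|\tilde A})+1-\alpha$) you say one must "show the connecting/tower entropy equals $(1-\alpha)d\log d$ to leading order in both directions" --- but that \emph{is} the statement to be proved, and it is where the paper has to work hardest. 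The paper can only carry out the induction step when $\h(p)=\frac{N-k}{N}$ is rational: it chooses partial bisections $s_i$ with $\h(s_i^{-1}s_i)=\frac1N$ exactly, counts the $\left(\frac{d'}{N}!\right)^k$ choices of connecting bijections, divides out the overcounting $(\lceil\delta d\rceil+1)^{\lceil\delta d\rceil k}$ coming from approximations that agree on $E\cup S$, and applies Stirling; the irrational case is then obtained by sandwiching $p$ between rational projections $p_n\leq p\leq q_n$ and playing the two inequalities against each other. Your proposal supplies none of this: no control of the multiplicity of the extension map $\sigma'\mapsto\sigma_\gamma$ (distinct towers could yield the same restriction, or conversely), no exact leading-order count of the tower data, and no mechanism for irrational $\alpha$. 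The upper bound sketch has the same issue in milder form: "contributes at most $\approx(1-\alpha)d\log d$" needs the multinomial/binomial bookkeeping and the $d^{\kappa d}$ fudge factors of the paper's second lemma to become an inequality between $|\SA_G|_{E\cup S}$ and $|\SA_{pGp}|_E$.

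A second, smaller gap: the reduction itself rests on the claim that $(X_0^G)_{|A}$ is a Bernoulli $pGp$-shift, which you call a "corner analogue of Lemma \ref{L - subshifts are Bernoulli}." That lemma as stated requires $H^0=G^0$ and does not cover corners. Your decomposition $t=t'c$ with $t'\in(pGp)^e$ and $c$ in a transversal of $A$ is the right idea, but for the regrouped coordinates to form a Bernoulli shift with a \emph{fixed} base $Y_0$ you need the sets $\{c:\ \r(c)=\s(t')\}$ to have constant cardinality over $t'$, which requires an additional selection argument using ergodicity and infinite fibres; without it $s(pGp)=s(R_{|\tilde A})$ is not justified (Proposition \ref{P-action general} gives only one inequality). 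Neither gap is an error of conception --- both are plausibly repairable --- but as written the proposal defers the entire quantitative core of the proposition to an unproved leading-order claim, and the paper's direct two-sided count on $\bb G$ (rational case plus sandwiching) is what actually fills it.
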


We will start by showing $\geq$ in the rational case, which is easier to handle:

\begin{lemma}\label{L-scaling - rational} If $\h(p)\in \IQ$ then 
\[
s(pGp)\le\frac{1}{\mathfrak{h}(p)}s(G)+1-\frac{1}{\mathfrak{h}(p)}.
\]
Furthermore the same inequality holds for $\underline s$ and $s_\omega$.
\end{lemma}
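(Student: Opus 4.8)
The plan is to exploit the rationality $\h(p)=a/b$ (with $a\le b$ positive integers) to realize the corner $pGp$ inside a ``matrix amplification'' of $G$, and then to count sofic approximations with the $d\log d$ normalization, the point being that each approximation of $pGp$ extends to \emph{many} approximations of $G$. First I would use ergodicity to fix a measurable partition $G^0=A_1\sqcup\cdots\sqcup A_b$ into pieces of measure $1/b$ with $p=A_1\sqcup\cdots\sqcup A_a$, and, by the von Neumann selection theorem, partial bisections $w_i\in\bb G$ with $\dom(w_i)=A_1$, $\ran(w_i)=A_i$ and $w_1=\mathrm{Id}_{A_1}$. Since $pGp$ is ergodic, a finite generating set $E'$ of $pGp$ together with $\{w_{a+1},\ldots,w_b\}$ generates $G$. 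On the combinatorial side I take $d=bm$ points split into blocks $Z_1,\ldots,Z_b$ of size $m$, set $Y:=Z_1\cup\cdots\cup Z_a$ with $d':=|Y|=am=\h(p)\,d$, and let $Y$ carry the approximations of $pGp$.

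Next I would build the extension map. Given $\tau\in\SA_{pGp}(F',n,\delta,d')$ (for a generating set $F'\supset E'$ of $pGp$) and \emph{arbitrary} bijections $\beta_i\colon Z_1\to Z_i$ for $a<i\le b$, define $\sigma$ on the generators of $G$ by $\sigma|_{E'}=\tau$ (viewed as a partial permutation of $Z$, undefined off $Y$) and $\sigma(w_i)=\beta_i$. The defining relations of the $w_i$ (the matrix‑unit relations $w_i^{-1}w_i=\mathrm{Id}_{A_1}$, $w_iw_i^{-1}=\mathrm{Id}_{A_i}$, and $g=w_i h w_j^{-1}$ for $g\colon A_j\to A_i$) hold \emph{exactly} under $\sigma$, and the trace rescales correctly: since $Y$ is a $\h(p)$‑fraction of $Z$, one checks $\tr_d\circ\sigma(h)=\h(p)\,\tr_{d'}\circ\tau(h)\approx\h(p)\,\tau_{pGp}(h)=\tau_G(h)$ for $h\in pGp$, and $\tr_d(\sigma(w_i))=0$, $\tr_d(\sigma(w_iw_i^{-1}))=1/b$, matching $\tau_G$. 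Propagating $\tau$'s errors through these conjugations gives $\sigma\in\SA_G(F,n,\delta',d)$ with $F=F'\cup\{w_{a+1},\ldots,w_b\}$ and $\delta'\le C(F,n)\delta$. Writing $E_G:=E'\cup\{w_{a+1},\ldots,w_b\}$, the assignment $(\tau|_{E'},(\beta_i))\mapsto\sigma|_{E_G}$ is injective (restriction to $E'$ recovers $\tau$, and $\sigma(w_i)$ recovers $\beta_i$), and there are $(m!)^{b-a}$ choices of $(\beta_i)$, all valid. Hence
\[
\bigl|\SA_{pGp}(F',n,\delta,d')\bigr|_{E'}\cdot (m!)^{b-a}\le \bigl|\SA_G(F,n,\delta',d)\bigr|_{E_G}.
\]

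The correction term now falls out of the asymptotics. Taking logarithms and dividing by $d'\log d'=am\log(am)$, Stirling's formula $\log(m!)\sim m\log m$ together with $\log(am),\log(bm)\sim\log m$ yields
\[
\frac{\log\bigl|\SA_{pGp}(d')\bigr|_{E'}}{d'\log d'}\le \frac{1}{\h(p)}\,\frac{\log\bigl|\SA_G(d)\bigr|_{E_G}}{d\log d}-\Bigl(\frac{1}{\h(p)}-1\Bigr)+o(1),
\]
because the ratio $\tfrac{d\log d}{am\log(am)}\to \tfrac ba=\tfrac1{\h(p)}$ and $\tfrac{(b-a)\log(m!)}{am\log(am)}\to\tfrac{b-a}{a}=\tfrac1{\h(p)}-1$. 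Passing to $\limsup$ and then $\inf_\delta\inf_n$ gives $s^{pGp}_{E'}(F')\le \tfrac1{\h(p)}\,s^{G}_{E_G}(F)+1-\tfrac1{\h(p)}$. Finally, as $F'$ ranges over finite generating sets of $pGp$ containing $E'$, the sets $F=F'\cup\{w_i\}$ are cofinal among finite generating sets of $G$ containing $E_G$, so by the generating‑set invariance of $s$ (Theorem \ref{T - OE invariance}) one has $\inf_{F'}s^{G}_{E_G}(F'\cup\{w_i\})=\inf_{F}s^{G}_{E_G}(F)\le s(G)$; taking $\inf_{F'}$ then $\sup_{E'}$ produces $s(pGp)\le\tfrac1{\h(p)}s(G)+1-\tfrac1{\h(p)}$. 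The identical argument applies to $\underline s$ and $s^\omega$ upon replacing $\limsup$ by $\liminf$ or $\lim_{d\to\omega}$; to avoid working only along the subsequence $d=bm$ one sets $m=\lceil d'/a\rceil$ and discards the $O(1)$ surplus points, which perturbs the estimates by $o(1)$.

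I expect the genuinely delicate work to be the groupoid construction rather than the counting: producing the connecting bisections $w_i\in\bb G$ measurably from ergodicity, verifying that $E_G$ generates $G$, and checking that $\sigma$ is a bona fide sofic approximation with $\delta'\le C(F,n)\delta$ (the matrix‑unit bookkeeping for multiplicativity and the trace rescaling). The conceptual heart, by contrast, is the elementary observation that the free connectors $\beta_i$ contribute the multiplicity $(m!)^{b-a}$, and that $\tfrac1{d'\log d'}\log\bigl((m!)^{b-a}\bigr)\to \tfrac1{\h(p)}-1$ is exactly the correction $1-\tfrac1{\h(p)}$ separating the sharp bound from the naive $\tfrac1{\h(p)}s(G)$.
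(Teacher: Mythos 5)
Your proposal follows essentially the same route as the paper's proof: choose partial bisections connecting $p$ to the complement $1-p$ (your $w_i$, the paper's $s_i$ with $\h(s_i^{-1}s_i)=1/N$ and $\sum_i s_is_i^{-1}=1-p$), extend each $\tau\in\SA_{pGp}(F',n,\delta,d')$ to an approximation of $G$ on $d=d'/\h(p)$ points by sending the connectors to freely chosen bijections, count the multiplicity $(m!)^{b-a}$, and extract the correction $1-\tfrac1{\h(p)}$ from Stirling. The counting and the asymptotics are right.

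There is, however, one concrete flaw in the construction as you state it: the claim that the matrix-unit relations ``hold exactly'' under $\sigma$ when $\beta_i$ is anchored to the \emph{fixed} block $Z_1$. The element $\tau(w_i^{-1}w_i)=\tau(\mathrm{Id}_{A_1})$ is only an approximate projection onto some $\tau$-dependent subset of $Y$ of cardinality $\approx m$, and nothing forces that subset to be $Z_1$; if it is essentially disjoint from $Z_1$, then compositions such as $\beta_i\,\tau(w_i^{-1}gw_j)\,\beta_j^{-1}$ are nearly empty and both the multiplicativity and the trace conditions for $\sigma$ fail badly, so $\sigma\notin\SA_G(F,n,\delta',d)$ for any reasonable $\delta'$. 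The fix is to let the domain of $\beta_i$ be a set $B_i$ of the exact right cardinality with $|B_i\bigtriangleup\Fix(\tau(w_i^{-1}w_i))|<\delta d$, which is precisely what the paper's proof does; this is not merely bookkeeping but the step that makes the extension a sofic approximation at all. It has a small knock-on effect on your injectivity claim: since $\sigma(w_i)$ then only determines $\beta_i$ on $\Fix(\tau(w_i^{-1}w_i))$ (up to how one writes the extension), one loses a factor of order $(\lceil\delta d\rceil+1)^{\lceil\delta d\rceil k}$ in the count, which the paper tracks explicitly and which is subexponential in $d\log d$, hence harmless. With that repair your argument coincides with the paper's.
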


\begin{proof} Write $\h(p)=\frac {N-k} N$ and choose $s_{1},s_{2}...s_{k}\in\bb G$ with 
\[
s_{i}^{-1}s_{i}\leq p, \ \h(s_{i}^{-1}s_{i})=\frac 1 N
\]
and 
\[
\sum_{i=1}^{k}s_{i}s_{i}^{-1}=1-p.
\] 
Let
$S=\{s_{1},...s_{k}\}$ so $pGp\cup S$ is generating $G$ and let $E,F\subset\bb{pGp}$ be finite subsets. We may assume that $p\in E\cap F$ and $s_{i}^{-1}s_{i}\in E\cap F$
for all $i$. Let $\sigma\in \SA_{pGp}(F,4n+5,\delta,d)$. In light of \cite[Lemma 2.13]{DKP2} we may also assume that $N-k|d$. Let $d'=\frac{N}{N-k}d$ and partition $\{1,\ldots, d'\}$ into sets $A_{0}...A_{k}$
with $A_{0}=\{1,...d\}$ and $|A_{i}|=\frac {d'} N$. 
Since  
\[
|\Fix\big(\sigma(s_{i}^{-1}s_{i})\big)|\geq\frac{\mathfrak{h}(s_{i}^{-1}s_{i})}{\mathfrak{h}(p)}d-\delta d
\]
 we may choose a subset $B_i\subset A_0$ with exactly $\frac{d'}{N}$ elements such that
\[
\left|\Fix\big(\sigma(s_{i}^{-1}s_{i})\big)\bigtriangleup B_i\right|<\delta d'.
\] 
For each $i=1\ldots k$ let $\gamma(s_i)$ be a bijection $B_i\to A_i$ (we have $\left(\frac {d'} N!\right)^k$ choices).

As
\[
\begin{array}{c}
s_{j}^{-1}s_{i}^{-1}=s_{i}s_{j}=0\ i,j\geq 1\\
s_{i}^{-1}s_{j}=0\ i\neq j\geq 1\\
ss_{i}=s_{i}^{-1}s=0\ i\geq 1,s\in pGp\\
s_{i}^{-1}s_{i}\in F
\end{array}
\]
each element of $\bigcup_{n}(\bb{pGp}\cup S)^{n}$
can be written (not necessarily uniquely) as $s_{i}fs_{j}^{-1}$ for $i,j\geq 0$ where $s_0=p$ and $f\in \bb{pGp}$. Let $\gamma(s_0)=p_{A_0}$
We define a map 
\[
\sigma_\gamma\colon (F\cup S)_\pm^{2n+5}\to \bb {d'}
\] 
by
\[
\sigma_\gamma(s_ifs_j^{-1}):= \gamma(s_i)\sigma'(s_i^{-1}s_ifs_j^{-1}s_j) {\gamma(s_j)}^{-1}.
\]
where we denote  $\sigma'(f)$ the permutation $\left(\begin{smallmatrix}\sigma(f) & 0\\
0 & 0
\end{smallmatrix}\right)
$ acting in $A_0\subset \{1,\ldots,d'\}$. Note that $\sigma_\gamma$ is well defined: if $s_{i_{1}}f_{1}s_{j_{1}}^{-1}=s_{i_{2}}f_{2}s_{j_{2}}^{-1}$
then since the $s_{i}$ have disjoint ranges we have $i_{1}=i_{2}$
$j_{1}=j_{2}$ so $s_{i_{1}}f_{1}s_{j_{1}}^{-1}=s_{i_{1}}f_{2}s_{j_{1}}^{-1}$
hence $s_{i_{1}}^{-1}s_{i_{1}}f_{1}s_{j_{1}}^{-1}s_{j_{1}}=s_{i_{1}}^{-1}s_{i_{1}}f_{2}s_{j_{1}}^{-1}s_{j_{1}}$
so that 
\[
\gamma(s_{i_{1}})\sigma'(s_{i_{1}}^{-1}s_{i_{1}}f_{1}s_{j_{1}}^{-1}s_{j_{1}}){\gamma(s_{j_{1}})}^{-1}=\gamma(s_{i_{1}})\sigma'(s_{i_{1}}^{-1}s_{i_{1}}f_{2}s_{j_{1}}^{-1}s_{j_{1}})\gamma(s_{j_{1}})^{-1}.
\]
then extend $\sigma_\gamma$ linearly to $\bsigma (F\cup S)_\pm^{2n+5}$.

Let $\delta'=5N^2\delta+150N^2(2|F\cup S|+1)^{2(2n+5)}\delta$. We claim that: 
\[
{\sigma_\gamma}\in \SA_{G}(F\cup S,n,\delta',d).
\] 

Let us first see what this implies. Note if $\sigma^1(f)\neq\sigma^2(f)$ then $\sigma^1_{\gamma}(f)=p_{A_0}\sigma^1_{\gamma}(f)p_{A_0}=\sigma^1_{\gamma}(f)\neq\sigma^2_{\gamma}(f)$. Next note if $\gamma_{1}(s_{i})\neq\gamma_{2}(s_{i}):B_{i}\rightarrow A_{i}$
are two bijections then if we have 
\[
\sigma_{\gamma_{1}}(s_{i})=\gamma_{1}(s_{i})\sigma'(p)=\gamma_{2}(s_{i})\sigma'(p)=\sigma_{\gamma_{2}}(s_{i})
\]
we must have $\gamma_{1}(s_{i})\big|_{fix(\sigma(p))}=\gamma_{2}(s_{i})\big|_{fix(\sigma(p))}$.
But since $|\Fix(\sigma(p))\triangle\{1,..,.d\}|\leq\delta d$ we have
at most $\left|\bb{\lceil\delta d\rceil}\right|\leq(\lceil\delta d\rceil+1)^{\lceil\delta d\rceil}$
choices for $\gamma_{2}$ such that $\gamma_{2}(s_{i})\neq\gamma_{1}(s_{i})$
and $\sigma_{\gamma_{2}}(s_{i})=\sigma_{\gamma_{1}}(s_{i})$. Therefore
there are at most $(\lceil\delta d\rceil+1)^{\lceil\delta d\rceil k}$
choices for $\gamma_{2}$ such that $\sigma_{\gamma_{1}}\big|_{E\cup S}\neq\sigma_{\gamma_{2}}\big|_{E\cup S}$
 Thus each $\sigma_{|E}\in \SA_{pGp}(F,n,\delta,d)$
gives at least $\frac{\left(\frac {d'} N!\right)^k}{(\lceil\delta d\rceil+1)^{\lceil\delta d\rceil k}}$ distinct elements 
${\sigma'}_{|E\cup S}\in \SA_{G}(F\cup S,n,\delta',d')$ so we obtain 
\[
|\SA_{pGp}(F,n,\delta,d)|_{E}\leq |\SA_{G}(F\cup S,4n+5,\delta',d')|_{E\cup S}/\frac{\left(\frac {d'} N!\right)^k}{(\lceil\delta d\rceil+1)^{\lceil\delta d\rceil k}}
\]

So let us show that ${\sigma_\gamma}\in \SA_{G}(F\cup S,n,\delta',d')$. We first show that $\sigma_\gamma$ is approximately linear. Suppose in the span we have 
\[
s_{i}fs_{j}=\sum_{\ell=1}^{M}s_{i_{\ell}}f_{\ell}s_{j_{\ell}},
\]
then if we let $L(u,v)$ be the set of all $\ell$ with
$i_{\ell}=u$ $j_{\ell}=v$ then $s_{i}fs_{j}=\sum_{\ell\in L(i,j)}s_{i}f_{\ell}s_{j}^{-1}$
and $s_{u}0s_{v}=0=\sum_{\ell\in L(u,v)}s_{p}f_{\ell}s_{q}^{-1}$
for $(u,v)\neq(i,j)$. So we may assume $i_\ell=i,j_\ell=j$ for all $\ell$ then
\begin{align*}
\sigma_\gamma(s_{i}fs_{j})&=\gamma(s_{i})\sigma'(s_{i}^{-1}s_{i}fs_{j}^{-1}s_{j})\gamma(s_{j})^{-1}\\
&=\gamma(s_{i})\sigma'\left (s_{i}^{-1}\left (\sum_{\ell=1}^{M}s_{i}f_{\ell}s_{j}^{-1}\right )s_{j}\right )\gamma(s_{j})^{-1}\\
&=\gamma(s_{i})\sigma'\left (\sum_{\ell=1}^{M}s_{i}^{-1}s_{i}f_{\ell}s_{j}^{-1}s_{j}\right )\gamma(s_{j})^{-1}
\end{align*}
so by \cite[Lemma 3.5]{DKP1}
\begin{align*}
\left|\sigma_\gamma(s_{i}fs_{j}) -\sum_{\ell=1}^{M}\sigma_\gamma(s_{i}f_{\ell}s_{j})\right|&=\left|\sigma_\gamma(s_{i}fs_{j}) - \sum_{\ell=1}^{M}\gamma(s_{i})\sigma'(s_{i}^{-1}s_{i}f_{\ell}s_{j}^{-1}s_{j})\gamma(s_{j})^{-1}\right|\\
&<150(2|F\cup S|+1)^{2(n+5)}\delta.
\end{align*}
Thus if $f=\sum_{\ell=1}^{M}f_{\ell}$, then
\begin{align*}
\left |\sigma_\gamma(f)-\sum_{\ell=1}^{M}\sigma_\gamma(f_{\ell})\right|&=\left |\sigma_\gamma(\sum_{i,j=1}^{N}s_{i}fs_{j})-\sum_{\ell=1}^{M}\sigma_\gamma(\sum_{i,j=1}^{N}s_{i}f_{\ell}s_{j}) \right |
\\ &\leq \sum_{i,j=1}^N\left|\sigma_\gamma(s_{i}fs_{j}) -\sum_{\ell=1}^{M}\sigma_\gamma(s_{i}f_{\ell}s_{j})\right|
\\ &<150N^2(2|F\cup S|+1)^{2(n+5)}\delta.
\end{align*}

Thus to show that $\sigma_\gamma$ is $\delta'$-multiplicative it will suffice to show that for
$a,b\in(F\cup S)_\pm^{n}$ such that $ab\in (F\cup S)_\pm^{n}$ we have 
\[
|\sigma_\gamma(ab)-\sigma_\gamma(a)\sigma_\gamma(b)|<5\delta.
\]
Indeed then for $\sum_i {a_i} ,\sum_i {b_i} \in \bsigma (F \cup S)_\pm^{n}$ with $\left(\sum_i a_i\right)\left(\sum_j b_j\right) \in\bsigma (F\cup S)_\pm^{n} $
\begin{align*}
\left |\sigma_\gamma\left(\left(\sum_i a_i\right)\left(\sum_j b_j\right)\right)-\sigma_\gamma\left(\sum_i a_i\right)\sigma_\gamma\left(\sum_j b_j\right)\right|&\\
&\hskip-6cm\leq \left |\sigma_\gamma\left(\sum_{i,j} a_ib_j\right)-\left(\sum_i \sigma_\gamma(a_i)\right)\left(\sum_j \sigma_\gamma(b_j)\right)\right|+\left(150N^2(2|F\cup S|+1)^{2(2n+5)}\delta\right)^2\\
&\hskip-6cm \leq \left |\sum_{i,j} \sigma_\gamma(a_ib_j)-\sum_{i,j} \sigma_\gamma(a_i)\sigma_\gamma(b_j)\right|+2\left(150N^2(2|F\cup S|+1)^{2(2n+5)}\right)^2\delta\\
&\hskip2.5cm\text{ (assuming $\delta<1$)}\\
&\hskip-6cm \leq 5N^2\delta+2\left(150N^2(2|F\cup S|+1)^{2(2n+5)}\right)^2\delta\\
&\hskip-6cm = \delta'\\
\end{align*}

So say we have $a=s_{i_{1}}f_{1}s_{j_{1}}^{-1}$ and $b=s_{i_{2}}f_{2}s_{j_{2}}^{-1}$
where $f_{1},f_{2}\in F_\pm^{n}$. If $j_{1}\neq i_{2}$
then $s_{j_{1}}^{-1}s_{i_{2}}=\gamma(s_{j_{1}})^{-1}\gamma(s_{i_{2}})=0$
so we are done. Suppose otherwise, then 
\begin{align*}
  |\sigma_\gamma(ab)-&\sigma_\gamma(a)\sigma_\gamma(b)|
  =| \gamma(s_{i_{1}})\sigma'(s_{i_{1}}^{-1}s_{i_{1}}f_{1}(s_{j_{1}}^{-1}s_{j_{1}})^3f_{2}s_{j_{2}}^{-1}s_{j_{2}})\gamma(s_{j_{2}})^{-1}-\\&\gamma(s_{i_{1}})\sigma'(s_{i_{1}}^{-1}s_{i_{1}}f_{1}s_{j_{1}}^{-1}s_{j_{1}})\gamma(s_{j_{1}})^{-1}\gamma(s_{j_{1}})\sigma'(s_{j_{1}}^{-1}s_{j_{1}}f_{2}s_{j_{2}}^{-1}s_{j_{2}})\gamma(s_{j_{2}})^{-1}|
\end{align*}
Note first that for any $i>0$ 
\begin{align*}
|\gamma(s_{i})^{-1}\gamma(s_{i})-\sigma'(s_{i}^{-1}s_{i})| & \leq \frac{1}{d'}|\Fix(\sigma(s_{i}^{-1}s_{i})) \bigtriangleup B_{i}|+|\sigma(s_{i}^{-1}s_{i})- p_{\Fix(\sigma(s_{i}^{-1}s_{i}))}|\\
&\leq 2\delta
\end{align*}
and for $g_{1},g_{2}\in F_\pm^{2n+5}$ such that $g_{1}g_{2}\in F_\pm^{2n+5}$
\begin{align*}
| \sigma'(g_{1}g_{2})-\sigma'(g_{1})\sigma'(g_{2})| & =\frac{1}{d'}d|\sigma(g_{1}g_{2})-\sigma(g_{1})\sigma(g_{2})|\leq\delta
\end{align*}
so by repeated applications of the triangle inequality
\[
|\sigma_\gamma(ab)-\sigma_\gamma(a)\sigma_\gamma(b)|\leq2\delta+3\delta=5\delta.
\]

Let us now show that $\sigma_\gamma$ is $\delta'$-trace-preserving. Let $s_{i}fs_{j}^{-1}\in(F\cup S)_\pm^{n}$.
If $i\neq j$ then 
\[
\tr(s_{i}fs_{j}^{-1})=\tr(\gamma(s_{i})\sigma'(s_{i}^{-1}s_{i}fs_{j}^{-1}s_{j})\gamma(s_{j})^{-1})=0,
\]
suppose otherwise, then
\begin{align*}
\left|\tr(\sigma_\gamma(s_{i}fs_{i}^{-1}))-\tau(s_{i}fs_{i}^{-1})\right| & =\left|\tr\left(\gamma(s_{i})\sigma'(s_{i}^{-1}s_{i}fs_{i}^{-1}s_{i})\gamma(s_{i})^{-1}\right)-\tau(s_{i}fs_{i}^{-1})\right|\\
 & =\left|\tr\left(\sigma'(s_{i}^{-1}s_{i}fs_{i}^{-1}s_{i})\gamma(s_{i})^{-1}\gamma(s_{i})\right)-\tau(fs_{i}^{-1}s_{i})\right|\\
 & \leq\left|\tr\left(\sigma'(s_{i}^{-1}s_{i}fs_{i}^{-1}s_{i})\gamma(s_{i})^{-1}\gamma(s_{i})\right))-\tr\left(\sigma'(s_{i}^{-1}s_{i}fs_{i}^{-1}s_{i})\right)\right|\\
 &\hskip1cm+\left|\tr\left(\sigma'(s_{i}^{-1}s_{i}fs_{i}^{-1}s_{i})\right)-\frac{(N-k)\tau(fs_{i}^{-1}s_{i})}{N\mathfrak{h}(p)}\right|\\
 & \leq(2\delta+\delta)+\delta<\delta'
\end{align*}
where we used the computation above.

So we have proved 
\[
|\SA_{pGp}(F,4n+5,\delta,d)|_{E}\leq |\SA_{G}(F\cup S,n,\delta',d')|_{E\cup S} / \frac{\left(\frac {d'} N!\right)^k}{(\lceil\delta d\rceil+1)^{\lceil\delta d\rceil k}}
\]
So 
\begin{align*}
\frac{\log|\SA_{pGp}(F,4n+5,\delta,d)|_{E}}{d\log d} & \leq\frac{\log\left (|\SA_{G}(F\cup S,n,\delta',d')|_{E\cup S}/\frac{\left(\frac {d'} N!\right)^k}{(\lceil\delta d\rceil+1)^{\lceil\delta d\rceil k}}\right)}{d\log d}\\
 & =\frac{N}{N-k}\frac{\log|\SA_{G}(F\cup S,n,\delta',d')|_{E\cup S}}{d'\log d}\\
 &\hskip2cm-{k}\frac{\log(\frac{d'}{N})!}{d\log d}+{\lceil \delta d \rceil k}\frac{\log (\lceil\delta d+1 \rceil )}{d\log d}
\end{align*}

Now for $\e>0$ arbitrary and $d$ sufficiently large we have
$\frac{1}{\log d}\leq\frac{1}{\log d'}+\frac{\e}{\log d'}$
so the left hand side is at most
\[
\frac{N}{N-k}(1+\epsilon)\frac{\log|\SA_{G}(F\cup S,n,\delta',d')|_{E\cup S})}{d'\log d'}-{k}\frac{\log(\frac{d'}{N})!}{d\log d}+{\lceil \delta d \rceil k}\frac{\log(\lceil\delta d+1 \rceil )}{d\log d}
\]
By Stirlings approximation 
\[
\log(\frac{d'}{N})!=\frac{d'}{N}\log\frac{d'}{N}+\frac{d'}{N}+O(\log\frac{d'}{N})
\]
therefore
\[
s_{pGp,E}(F,2n+5,\delta)\leq\frac{N}{N-k}(1+\e)\limsup_{d\to\infty}s_{G,E\cup S}(F\cup S,n,\delta',d')+\frac{k}{N}+k\delta
\]
which gives
\begin{align*}
s_{pGp,E}(F,4n+5)&\leq\frac{1}{\mathfrak{h}(p)}(1+\e)s_{G,E\cup S}(F\cup S,n)+\frac k N\\
&=\frac{1}{\mathfrak{h}(p)}(1+\e)s_{G,E\cup S}(F\cup S,n)+1-\frac{1}{\mathfrak{h}(p)}
\end{align*}
From here it is clear that 
\[
s(pGp)\leq\frac{1}{\mathfrak{h}(p)}s(G)+1-\frac{1}{\mathfrak{h}(p)}.
\]
The same proof works with $\liminf_{d\to \infty}$ and $\lim_{d\to \omega}$ instead of $\limsup_{d\to\infty}$. 
\end{proof}

Next we prove the other direction in the general case:

\begin{lemma} $s(G)\leq\mathfrak{h}(p)s(pGp)+1-\mathfrak{h}(p)$ and similarly for $\underline s$ and $s_\omega$.
\end{lemma}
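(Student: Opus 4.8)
The plan is to run the counting argument of the previous lemma \emph{in reverse}, compressing a sofic approximation of $G$ to one of $pGp$ rather than extending it. Since $G$ is ergodic we may, exactly as before but now without any rationality constraint, choose finitely many partial bisections $s_1,\ldots,s_k\in\bb G$ with $s_i^{-1}s_i\le p$, with pairwise orthogonal ranges, and with $\sum_{i=1}^k s_is_i^{-1}=1-p$ (each $s_i$ carries a sub-projection of $p$ onto a piece $r_i$ of $1-p$ with $\h(r_i)\le\h(p)$). Writing $s_0:=p$ and $S:=\{s_1,\ldots,s_k\}$, for any finite generating set $F\subset\bb{pGp}$ of $pGp$ the set $F\cup S$ generates $G$; moreover, as the ranges $r_i$ are orthogonal one checks $s_j^{-1}s_i=0$ for $i\neq j$, so every element of $(F\cup S)_\pm^n$ can be written $s_i f s_j^{-1}$ with $f\in\bb{pGp}$, and these products close up.

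First I would set up the compression. Given $\sigma\in\SA_G(F\cup S,n,\delta,d)$, put $E_p:=\Fix(\sigma(p))$ and $d_p:=|E_p|$; trace-preservation gives $|d_p/d-\h(p)|<\delta$. For $f\in\bb{pGp}$ one has $f=pfp$, so $\sigma(f)$ is Hamming-close (up to a constant depending on $F,n$) to $p_{E_p}\sigma(f)p_{E_p}$, and setting $\sigma'(f):=p_{E_p}\sigma(f)p_{E_p}\in\bb{d_p}$ yields, after routine estimates from $\delta$-multiplicativity, an element $\sigma'\in\SA_{pGp}(F,n,c\delta,d_p)$, where $pGp$ carries the normalized measure $\h_{|pGp}/\h(p)$; this normalization is exactly what makes $\tr_{d_p}(\sigma'(f))\approx\tau_G(f)/\h(p)$ agree with the normalized trace on $pGp$.

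The heart of the matter is the fiber count. I would show that the restriction $\sigma|_{E_G}$ to any finite $E_G\subset(F\cup S)_\pm^n$ is determined, up to a Hamming error of size $O(|E_G|\delta d)$, by the pair $(\sigma',(\sigma(s_i))_{i=1}^k)$: indeed $\sigma(s_ifs_j^{-1})$ is $O(\delta)$-close to $\sigma(s_i)\sigma(f)\sigma(s_j)^{-1}$ by approximate multiplicativity, and $\sigma(f)$ is in turn controlled by $\sigma'$. Each $\sigma(s_i)$ is a partial bijection whose domain and range have size $m_i\approx\h(r_i)d$, so there are at most $\binom{d}{m_i}^2 m_i!=\exp(m_i\log d+O(d))$ choices for it; since $\sum_i\h(r_i)=\h(1-p)$ the product over $i$ is $\exp(\h(1-p)\,d\log d+O(d))$, while the number of partial permutations within Hamming distance $O(\delta)$ of a fixed one adds a factor $\exp(O(\delta\,d\log d))$. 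Summing over the $O(\delta d)$ possible values of $d_p$ therefore gives
\[
\log|\SA_G(F\cup S,n,\delta,d)|_{E_G}\le \log|\SA_{pGp}(F,n,c\delta,d_p)|_{E'}+\h(1-p)\,d\log d+O(\delta\,d\log d)+O(d).
\]

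Finally I would divide by $d\log d$ and let $d\to\infty$, using $d_p/d\to\h(p)$ and $\log d_p/\log d\to1$ to replace $\tfrac1{d\log d}\log|\SA_{pGp}(\cdots,d_p)|_{E'}$ by $\h(p)$ times the corresponding quantity at scale $d_p$; then take $\inf_\delta$ (which kills the $O(\delta)$ terms and sends $c\delta\to0$), followed by $\inf_n$, $\inf_F$ and $\sup_E$ as in the definition of $s$ and as at the end of the previous lemma, to obtain $s(G)\le\h(p)s(pGp)+1-\h(p)$. Replacing $\limsup$ by $\liminf$ or $\lim_{d\to\omega}$ throughout gives the statements for $\underline s$ and $s_\omega$. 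The main obstacle I anticipate is precisely this fiber count: one must check that the approximate-multiplicativity slack and the non-uniqueness of the compression cost only $O(\delta)$ \emph{after} normalization, so that $s(pGp)$ emerges with exactly the coefficient $\h(p)$ once $\inf_\delta$ is taken.
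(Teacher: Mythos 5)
Your proposal is correct and follows essentially the same route as the paper: compress $\sigma$ to (a size-adjusted version of) $\Fix(\sigma(p))$ to get $\sigma'\in\SA_{pGp}$, use the decomposition $s_ifs_j^{-1}$ to see that $\sigma|_{E\cup S}$ is determined by $\sigma'$ and the $\sigma(s_i)$ up to an $\exp(O(\delta\, d\log d))$ ambiguity, and absorb the $\exp(\h(1-p)\,d\log d+O(d))$ count of choices for the partial bijections $\sigma(s_i)$ into the $1-\h(p)$ term. The only cosmetic difference is that the paper fixes $|B|=\lfloor\h(p)d\rfloor$ exactly rather than summing over the possible values of $d_p$, and organizes the count via an explicit multinomial coefficient; both yield the same asymptotics.
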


\begin{proof}

Let $S=\{s_{1},s_{2},\ldots,s_k\}\subset\bb G$ be such that
\[
s_{i}^{-1}s_{i}\leq p 
\text{\   and  \ }
\sum_{i=1}^{k}s_{i}s_{i}^{-1}=1-p.
\] 
and let $E,F\subset\bb{pGp}$ be finite subsets. We may assume that $p\in E\cap F$ and $s_{i}s_{i}^{-1},s_{i}^{-1}s_{i}\in E\cap F$ for all $i$.

Let $\sigma\in \SA_{G}(F\cup S,n+4,\delta,d)$ and assume that $d$ is large enough so $d':=\lfloor\h(p)d\rfloor>\delta^{-1}$. 
Let $B_{0}=\Fix\sigma(p)$ so
\[
\left |\frac{|B_{0}|}d -\h(p)\right|<\delta.
\]
Thus  we may arbitrarily extend or shrink $B_{0}$ to a subset $B$
such that either $B\subset B_{0}$ or $B\supset B_{0}$ and $|B|=d'$ where
\begin{align*}
\left| \sigma(p)-p_{B}\right| & \leq\left| \sigma(p)-p_{\Fix \sigma(p)}\right|+\left| p_{\Fix\sigma(p)}-p_{B}\right|\\
 & =\frac{1}{d}|\{x\in\{ 1\ldots d\} \mid \sigma(p)^{2}x\neq \sigma(p)\}|+|{\frac{1}{d}|B_{0}|-\h(p)|}+|{\frac{1}{d}|B|-\h(p)|}\\
 & <3\delta.
\end{align*}

Let $\delta'=\frac{20\delta}{\h(p)}$.
We will show that 
\[
\sigma':=\sigma_{|_{B}}\in \SA_{pGp}(F,n,\delta',d').
\]

Cleary $\sigma'$ is well defined, let us show it is $\delta'$-multiplicative that is for
$a,b\in \bsigma F_{\pm}^n$ with $ab\in \bsigma F_{\pm}^n$ 
that
\[
\left| {\sigma'}(ab)-{\sigma'}(a){\sigma'}(b)\right|<\delta'.
\]
So
\begin{align*}
\left| \sigma'(ab)-{\sigma'}(a){\sigma'}(b)\right| & ={\frac{1}{d'}\sum_{x\in B}\left|\{x\in B \mid {\sigma'}(ab)x\neq {\sigma'}(a){\sigma'}(b)x\}\right| }\\
 & ={\frac{d}{d'}}\left| p_{B}\sigma(ab)p_{B}-p_{B}\sigma(a)p_{B}\sigma(b)p_{B}\right|\\
 &\leq \frac{\delta}{\h(p)}(15\delta+5\delta)=\frac{20\delta^2}{\h(p)}< \delta' \text{, for $\delta<1$}
\end{align*}
We now show $\sigma'$ is $\delta'$-trace-preserving. Let $a\in F_{\pm}^n$ then
\begin{align*}
\left|\frac{|\Fix{\sigma'}(a)|}{d'}-\frac{\tau_{G}(a)}{\h(p)}\right| & =\frac{1}{\h(p)}\left|\frac{|\Fix{\sigma'}(a)|}{d'/\h(p)}-\tau_{G}(a)\right|\\
&\leq\frac{1}{\h(p)}\left|\frac{\h(p)}{d'}-\frac{1}{d}\right|+\frac{1}{\h(p)}\left|\frac{|\Fix{\sigma'}(a)|}{d}-\tau_{G}(a)\right|\\
 & \leq\frac{1}{\h(p)}\left(\left|\frac{|\Fix p_{B}\sigma(a)p_{B}|}{d}-\frac{|\Fix\sigma(a)|}{d}\right|+\left|\tr\circ \sigma (a)-\tau_{G}(a)\right|+\delta^2\right)\\
 & \leq\frac{1}{\h(p)}\left(\left| p_{B}\sigma(a)p_{B}-\sigma(a)\right|+\delta+\delta^2\right)\\
 & \leq\frac{1}{\h(p)}\left((6\delta+2\delta)+\delta+\delta^2\right)=\frac{9\delta+\delta^2}{\h(p)}<\delta' \text{, for $\delta<1$}
\end{align*}
So ${\sigma'}\in \SA_{pGp}(F,n,\delta',d')$ as
claimed. 

Next we study the map
\begin{align*}
\SA_{G}(F\cup S,n+4,\delta,d)&\to\SA_{pGp}(F,n,\delta',d')\\
\sigma&\mapsto \sigma'
\end{align*}
Note that 
\begin{align*}
\left|\frac{|\Fix\sigma(s_{i})\sigma(s_{i})^{-1}|}{d}-\mathfrak{h}(s_{i}s_{i}^{-1})\right| & \leq\left|\frac{|\Fix\sigma(s_{i})\sigma(s_{i})^{-1}|}{d}-\frac{|\Fix\sigma(s_{i}s_{i}^{-1})|}{d}\right|\\
&\hskip3cm +\left|{\tr\circ\sigma(s_{i}s_{i}^{-1})}-\mathfrak{h}(s_{i}s_{i}^{-1})\right|\\
 & \leq\left| \sigma(s_{i})\sigma(s_{i})^{-1}-\sigma(s_{i}s_{i}^{-1})\right|+\delta\\
 & \leq5\delta+\delta<6\delta
\end{align*}
and for $i\neq j$
\begin{align*}
|\ran(\sigma(s_{i}))\cap \ran(\sigma(s_{j}))| & =\left|\{x\in\{1,\ldots,d\}\mid \sigma(s_{j})\sigma(s_{j})^{-1}\sigma(s_{i})\sigma(s_{i})^{-1}x\neq0\}\right|\\
 & \leq d\left| \sigma(s_{j})\sigma(s_{j})^{-1}\sigma(s_{i})\sigma(s_{i})^{-1}\right|<9\delta d
\end{align*}
so
\begin{align*}
|\ran(\sigma(s_{i}))\cap B| & =\left|\{x\in\{1,\ldots,d\}\mid p_{B}\sigma(s_{i})\sigma(s_{i})^{-1}x\neq0\}\right|\\
 & \leq d\left| p_{B}\sigma(s_{i})\sigma(s_{i})^{-1}\right|\leq (3\delta+5\delta)d\leq 8\delta d
\end{align*}

So for all $d$ sufficiently large 
\[
\ran(\sigma(s_{i}))\setminus(\bigcup_{j\neq i}\ran(\sigma(s_{j}))\cup B)
\]
contains at least $d_{i}:=\lfloor{\mathfrak{h}(s_{i}s_{i}^{-1})d}-(14\delta + 9\delta k)d\rfloor$ elements so we can find 
\[
A_{i}\subset \ran(\sigma(s_{i}))\setminus(\bigcup_{j\neq i}\ran(\sigma(s_{j}))\cup B)
\]
be a subset with size $|A_{i}|=d_i$ and $A_{i}\cap A_{j}=\emptyset$
for $i\neq j$ and $A_{i}\cap B=\emptyset$.  Similarly
we have 
\[\left|\frac{|\Fix\sigma(s_{i})^{-1}\sigma(s_{i})}{d}-\mathfrak{h}(s_{i}^{-1}s_{i})\right| <6\delta \]
and
\begin{align*}
|\dom\sigma(s_{i})\bigtriangleup\Fix\sigma(s_{i}^{-1}s_{i})\cap B| & \leq|\{x\in\{1\ldots,d\}\big|P_{B}P_{\Fix\sigma(s_{i}^{-1}s_{i})}x\neq\sigma(s_{i})^{-1}\sigma(s_{i})x\}|\\
 & \leq d\left|P_{B}P_{\Fix\sigma(s_{i}^{-1}s_{i})}-\sigma(s_{i})^{-1}\sigma(s_{i})\right|<d6\delta
\end{align*}
So we can find subsets $B_{i}\subset \dom(\sigma(s_{i}))\cap \Fix\sigma(s_{i}^{-1}s_{i})\cap B$
with $|B_{i}|=d_i$. Let $d_{k+1}=d-d'-\sum_{i=1}^kd_i$. Also recall that $|\Fix \sigma(s_is_i^{-1})|\leq d(\h(s_i^{-1}s_i)+\delta)$.

Let $|\SA_{G}(F\cup S,n+4,\delta,d)\big|_{E\cup S}'$ be the number of
elements of $\SA_{G}(F\cup S,n+4,\delta,d)$ where we distinguish
elements by their values on $\{p_{B}\sigma(f)p_{B}\mid f\in E\}$ and $\{p_{A_{i}}\sigma(s_{i})p_{B_{i}}\}$.
We have shown with the above computations that 
\begin{align*}
|\SA_{G}(F\cup S,n+4,\delta,d)\big|'_{E,S}&\\
&\hskip-2cm \leq{d \choose d',d_1,\ldots,d_{k+1}}\left |\SA_{pGp}(F,n,\delta',d')\right|_{E}\\
&\hskip1cm\prod_{i=1}^{k}{d(\mathfrak{h}(s_{i}^{-1}s_{i})+\delta) \choose d_{i}}\prod_{i=1}^{k}d_{i}!
\end{align*}
where the former term accounts for the choice of the $A_i$ and $B$ and the latter terms for the choice of the $B_i$ and $\sigma(s_i)$ respectively.

Now note that for $f\in E$
\[
\left| p_{B}\sigma(f)p_{B}-\sigma(f)\right|\leq (6\delta+2\delta)=8\delta
\]
and similarly for $i\neq j$
\begin{align*}
\left|p_{A_{i}}\sigma(s_{i})p_{B_{i}}-\sigma(s_{i})\right| & \leq\left|p_{A_{i}}-\sigma(s_{i})^{-1}\sigma(s_{i})\right|+\left|p_{B_{i}}-\sigma(s_{i})\sigma(s_{i})^{-1}\right|+3\delta\\
 & \leq(14\delta+9k\delta+6\delta)+(14\delta+9k\delta+6\delta)+\delta+3\delta\text{, for d sufficiently large}\\
 & =(44+18k)\delta
\end{align*}
so we can find $\kappa>0$ with $\kappa\rightarrow0$ as $\delta\rightarrow0$
so that for a given choice of $p_{B}\sigma(f)p_{B}$ there are $d^{\kappa d}$
choices for $\sigma(f)$ and similarly for $\sigma(s_{i})$ (see \cite[Lemma 2.5]{DKP2}) so
\begin{align*}
|\SA_{G}(F\cup S,n+4,\delta,d)\big|_{E,S}&\\
&\hskip-2cm \leq{d \choose d',d_1,\ldots,d_{k+1}}\left |\SA_{pGp}(F,n,\delta',d')\right|_{E}\\
&\hskip1cm\prod_{i=1}^{k}{d(\mathfrak{h}(s_{i}^{-1}s_{i})+\delta) \choose d_{i}}\prod_{i=1}^{k}(d_{i}!)d^{\kappa d|E\cup S|}.
\end{align*}
Therefore
\begin{align*}
\log|\SA_{G}(F\cup S,n+4,\delta,d)\big|_{E,S} & \leq\log\frac {d!}{d'!d_{k+1}!}+\log\left |\SA_{pGp}(F,n,\delta',d')\right|_{E}\\
&\hskip-2cm+\sum_{i=1}^k\log \frac{d(\mathfrak{h}(s_{i}^{-1}s_{i})+\delta)!}{d_{i}!(d(\mathfrak{h}(s_{i}^{-1}s_{i})+\delta)-d_i)!}+\kappa d|E\cup S|\log d
\end{align*}
hence
\begin{align*}
\log|\SA_{G}(F\cup S,n+4,\delta,d)\big|_{E,S}+\log d'! \leq \log{d!}+\log\left |\SA_{pGp}(F,n,\delta',d')\right|_{E}+\e(\delta,d).
\end{align*}
with
\[
\e(\delta,d):=\sum_{i=1}^k\log \frac{d(\mathfrak{h}(s_{i}^{-1}s_{i})+\delta)!}{d_{i}!(d(\mathfrak{h}(s_{i}^{-1}s_{i})+\delta)-d_i)!}+\kappa d|E\cup S|\log d
\]
so since $\lim_{d\to \infty} \frac{\log d'!}{\d\log d}=\h(p)$ and $\inf_{\delta}\lim_{d\to \infty}\e(\delta,d)=0$ we obtain finally
\begin{align*}
s_{G,E\cup S}(F\cup S,n+4)+\h(p)\leq1+\inf_{\delta>0} \limsup_{d\to\infty}\frac{\left |\SA_{pGp}(F,n,\delta',d')\right|_{E}}{d\log d}.
\end{align*}
Using \cite[Lemma 2.13]{DKP2}
\begin{align*}
s_{G,E\cup S}(F\cup S,n+4)+\h(p)\leq1+\h(p)s_{pGp,E}(F,n)
\end{align*}
and we deduce
\[
s(G)\leq\mathfrak{h}(p)s(pGp)+1-\mathfrak{h}(p).
\]
The same proof works with $\liminf_{d\to \infty}$ and $\lim_{d\to \omega}$ instead of $\limsup_{d\to\infty}$. 
\end{proof}

Finally we deduce the scaling formula:

\begin{proof}[Proof of Proposition \ref{P-scaling}]
Let $p_n\leq p\leq q_n$ be projections with $\h(p_n),\h(q_n)\in \IQ$ and $|p_n-q_n|\to 0$.
\[
\h(p)s(pGp)\leq\mathfrak{h}(p_n)s(p_nGp_n)+\h(p)-\mathfrak{h}(p_n)
\]
by the previous lemma so $\liminf_{n\to\infty} s(p_nGp_n)\geq s(pGp)$. Similarly
\[
\h(q_n)s(q_nGq_n)\leq\mathfrak{h}(p)s(pGp)+\h(q_n)-\mathfrak{h}(p)
\]
by the previous lemma so $\limsup_{n\to\infty} s(q_nGq_n)\leq s(pGp)$. However the two lemmas combined imply
\[
s(G)=\mathfrak{h}(p_n)s(p_nGp_n)+1-\mathfrak{h}(p_n).
\]
so
\[
\limsup_{n\to\infty} s(p_nGp_n)=\frac 1{\h(p)}(s(g)-1)+1
\]
and
\[
s(G)=\mathfrak{h}(q_n)s(q_nGq_n)+1-\mathfrak{h}(q_n)
\]
so
\[
\liminf_{n\to\infty} s(q_nGq_n)=\frac 1{\h(p)}(s(G)-1)+1.
\]
Therfore
\[
s(pGp)=\frac 1{\h(p)}(s(G)-1)+1
\]
and the same argument works for $\underline s$ and $s_\omega$.

So
\[
\underline s(G)=s(G)\ssi \underline s(pGp)=s(pGp)
\]
so $G$ is $s$-regular if and only if $pGp$ is $s$-regular.
\end{proof}

\section{Applications of the correspondence principle and the proof of Theorem \ref{T - FPF}}\label{S- applications}

Since the work of \cite{Dye}, orbit equivalence relations have been used  to prove results in group theory on several occasions, with the Bernoulli action $\G\acts X_0^\G$ serving as a link. A recent example is the use of the Gaboriau--Lyons theorem \cite{GL} (that the orbit relation of $\G\acts [0,1]^\G$ of a non amenable group $\G$ contains a nonamenable subtreeing) as a way to replace the assumption ``contains a nonamenable free group'' on $\G$ by ``$\G$ is non amenable''.
The `correspondence principle' discussed here is a straightforward but useful extension of   this well--known  idea from groups to groupoids.

For example let us prove the following result using the principle:

\begin{proposition}\label{P - amenable groupoid actions}
Let $G\acts X$ be a pmp action of an amenable pmp groupoid $G$. Then $s(G)=s(G\ltimes X)$. In particular $s(G\ltimes X)$ doesn't depend on the action.
\end{proposition}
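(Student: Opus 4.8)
The plan is to deduce the statement from its analogue for equivalence relations via the correspondence principle. First, by Proposition \ref{P-action general} we already have $s(G\ltimes X)\le s(G)$, and the counting bound there gives the same inequality for $\underline s$ and $s^\omega$; so the real issue is the reverse inequality. Observe also that, since $G$ is amenable and amenability of pmp groupoids is preserved under forming the crossed product with a pmp action, the groupoid $G\ltimes X$ is again amenable.

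I would apply Theorem \ref{T-action Bernoulli infinite} to both $G$ and $G\ltimes X$, taking the base $(X_0,\mu_0)$ to be diffuse. The Bernoulli action over a diffuse base is essentially free, so Lemma \ref{L - free groupoid actions are equivalence relations} shows that both crossed products are pmp equivalence relations, and they are amenable because $G$ and $G\ltimes X$ are. This gives $s(G)=s(R)$ with $R:=G\ltimes X_0^G$, and $s(G\ltimes X)=s\bigl((G\ltimes X)\ltimes X_0^{G\ltimes X}\bigr)$. The crux of the reduction is the canonical identification
\[
(G\ltimes X)\ltimes X_0^{G\ltimes X}\;\simeq\;G\ltimes\bigl(X\times_{G^0}X_0^G\bigr)\;\simeq\;R\ltimes Z,
\]
where $G$ acts diagonally on $X\times_{G^0}X_0^G$ (by the given action on $X$ and the Bernoulli shift on $X_0^G$), and $Z:=X\times_{G^0}X_0^G$ is viewed as an $R$-space through the equivariant projection $Z\to X_0^G$. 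Indeed the fibre of $X_0^{G\ltimes X}$ over $w\in X$ is $X_0^{(G\ltimes X)^w}\cong X_0^{G^{p(w)}}$, and the induced action of $G\ltimes X$ is exactly the diagonal $G$-action. Thus the problem is reduced to the equivalence-relation assertion: if $R$ is an amenable pmp equivalence relation and $R\acts Z$ is a pmp action, then $s(R)=s(R\ltimes Z)$, and likewise for $\underline s$ and $s^\omega$.

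To settle this equivalence-relation statement I would argue as follows. The relation $R\ltimes Z$ is amenable, hence hyperfinite by Connes--Feldman--Weiss, and so is $R$. Passing from $R$ to $R\ltimes Z$ preserves the orbit-size decomposition into its type $\mathrm{I}$ (finite-orbit) and type $\mathrm{II}_1$ (infinite-orbit) parts, with matching measures. On each ergodic type $\mathrm{II}_1$ component the relation is, up to isomorphism, the unique hyperfinite $\mathrm{II}_1$ relation, so the orbit-equivalence invariance of the sofic dimension (Theorem \ref{T - OE invariance}) forces equality there; on the type $\mathrm{I}$ part both sides vanish. Assembling the ergodic components by means of the scaling formula (Proposition \ref{P-scaling}) yields $s(R)=s(R\ltimes Z)$, and the same for $\underline s$ and $s^\omega$. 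Feeding this back through the identification above gives $s(G)=s(G\ltimes X)$; independence of the action is then immediate since the right-hand side no longer refers to $X$.

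The main obstacle is precisely this last, equivalence-relation, step: turning amenability into a structural statement strong enough to pin down the sofic dimension. The reduction via the correspondence principle is essentially formal once the identification $(G\ltimes X)\ltimes X_0^{G\ltimes X}\simeq R\ltimes Z$ is in place, but the equality $s(R)=s(R\ltimes Z)$ rests on Connes--Feldman--Weiss and on a careful bookkeeping of the type decomposition, so that the correct value (which need not be $1$ --- for instance it is $0$ when orbits are finite) is matched on each ergodic component.
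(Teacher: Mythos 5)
Your reduction is essentially the paper's: both you and the authors pass to the Bernoulli extension over a diffuse base, use Theorem \ref{T-action Bernoulli infinite} on $G$ and on $G\ltimes X$, and identify $(G\ltimes X)\ltimes X_0^{G\ltimes X}$ with $G\ltimes\bigl(X\times_{G^0}X_0^{G\ltimes X}\bigr)$ so that everything becomes a statement about amenable pmp equivalence relations. That part is fine. The divergence, and the gap, is in how you close the argument for amenable equivalence relations. The paper does not argue via hyperfiniteness and ergodic decomposition at all: it quotes \cite[Corollary 5.2]{DKP1}, which gives the closed formula $s(R)=1-\mu(D)$ for an amenable pmp equivalence relation $R$, with $D$ a fundamental domain for the finite-orbit part, and pairs it with a lemma showing that for any essentially free pmp action of $G$ one has $\mu(D)=\int_{G^0}|G_e|^{-1}\,\d\h(e)$, a quantity depending on $G$ only. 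This single formula handles non-ergodic relations and the finite-orbit part simultaneously, which is exactly where your substitute argument fails.

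Concretely, two steps of your final paragraph do not go through. First, the claim that ``on the type $\mathrm{I}$ part both sides vanish'' is false: for a relation with orbits of size $n$ the formula above gives $s=1-1/n$ (e.g.\ $s=1/2$ for the transitive relation on two points), not $0$. Equality of the two sides on the finite part is still true, but it needs an argument (the finite parts of $R$ and $R\ltimes Z$ need not be isomorphic --- one base may be atomic and the other diffuse --- so orbit-equivalence invariance alone does not apply). Second, and more seriously, ``assembling the ergodic components by means of the scaling formula'' is unsupported: Proposition \ref{P-scaling} relates $s(G)$ to $s(pGp)$ for a corner of an \emph{ergodic} groupoid, and nothing in the paper (nor anything obvious from the counting definition of $s$) lets you integrate the value of $s$ over a generally continuous ergodic decomposition of a non-ergodic amenable relation. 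Note that the proposition is applied in the proof of Theorem \ref{T - FPF} to $G_3$, which is amenable but not assumed ergodic, so this case cannot be dismissed. To repair your proof you would need either to establish such an integration formula or simply to invoke \cite[Corollary 5.2]{DKP1} together with the fundamental-domain lemma, as the paper does.
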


Let us first observe that the measure of the set of finite orbits of an action  $G\acts X$ doesn't depend on the action:

\begin{lemma}
If $G\acts X$ is an essentially free pmp action of a pmp groupoid $G$ and $D\subset X$ is a fundamental domain for the  set of finite orbits of $G$ in $X$, then 
\[
\mu(D):=\int_{G^0} \frac 1{|G_e|} \d\h(e)
\]
so $\mu(D)$ depends on $G$ only.
\end{lemma}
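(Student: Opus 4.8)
The plan is to transport the statement to the crossed–product equivalence relation and finish with a mass–transport (Haar–symmetry) computation. By Lemma~\ref{L - free groupoid actions are equivalence relations} the essential freeness of $G\acts X$ makes $R:=G\ltimes X$ a pmp equivalence relation whose classes are exactly the $G$–orbits in $X$. First I would set up the dictionary: for $x\in X$ with $p(x)=e$ the class is $[x]_R=\{gx\mid g\in G_e\}$, and essential freeness forces $g\mapsto gx$ to be a bijection of $G_e$ onto $[x]_R$. Indeed, if $gx=g'x$ with $g,g'\in G_e$, then $\r(g)=p(gx)=p(g'x)=\r(g')$, so $h:=g^{-1}g'\in G(e)$ is a well–defined isotropy element with $hx=x$; as $G(e)$ is countable, for a.e.\ $x$ essential freeness gives $h=e$, whence $g=g'$. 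Thus $|[x]_R|=|G_{e}|$ for a.e.\ $x$, and the finite–orbit set $X_{\mathrm{fin}}:=\{x\mid |[x]_R|<\infty\}$ coincides, up to a null set, with $p^{-1}(A)$, where $A:=\{e\in G^0\mid |G_e|<\infty\}$.

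The heart of the argument is the identity
\[
\mu(D)=\int_{X_{\mathrm{fin}}}\frac{1}{|[x]_R|}\,\d\mu(x),
\]
which I would extract from the pmp symmetry of $R$, i.e.\ from $\int_X|C^x|\,\d\mu=\int_X|C_x|\,\d\mu$ for Borel $C\subset R$ (equivalently, orbital averaging preserves the integral on the finite part). Concretely, set $F(x,y):=\mathbf 1_D(y)/|[x]_R|$ for $(x,y)\in R$ with $|[x]_R|<\infty$ and $F:=0$ otherwise; since $D\subset X_{\mathrm{fin}}$, the kernel $F$ is supported on pairs lying in finite classes, so all the sums below are finite and Tonelli applies. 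Summing over the range fibre gives $\sum_{y\sim x}F(x,y)=|D\cap[x]_R|/|[x]_R|$, which equals $1/|[x]_R|$ on $X_{\mathrm{fin}}$ (as $D$ meets each finite class once) and $0$ elsewhere; summing over the source fibre gives $\sum_{x\sim y}F(x,y)=\mathbf 1_D(y)\sum_{x\sim y}1/|[x]_R|=\mathbf 1_D(y)$, using $[x]_R=[y]_R$ and $\sum_{x\sim y}1/|[y]_R|=1$. Integrating both against $\mu$ and invoking the pmp symmetry yields the displayed identity.

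It then remains to evaluate the right–hand side fibrewise. Writing $\mu=\int_{G^0}\mu^e\,\d\h(e)$ with each $\mu^e$ a probability measure on $X^e$, and using $|[x]_R|=|G_{p(x)}|$ together with $X_{\mathrm{fin}}=p^{-1}(A)$, I would conclude
\[
\mu(D)=\int_{A}\Big(\int_{X^e}\frac{1}{|G_e|}\,\d\mu^e\Big)\,\d\h(e)=\int_A\frac{1}{|G_e|}\,\d\h(e)=\int_{G^0}\frac{1}{|G_e|}\,\d\h(e),
\]
with the convention $1/\infty=0$ off $A$; in particular the value depends only on $G$, as asserted.

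The hard part will be the rigorous justification of the mass–transport step: checking measurability of $x\mapsto|[x]_R|$ and of the relevant orbital sums, confirming that the pmp symmetry of $R$ applies to the nonnegative, fibrewise finitely supported kernel $F$, and ruling out contributions from infinite classes. This last point is handled precisely by the containment $D\subset X_{\mathrm{fin}}$, which annihilates every term over an infinite class, so no delicate convergence issue arises. I note that verifying the conditional measures on finite classes are the normalized counting measures amounts to the very same computation, so no separate input is required.
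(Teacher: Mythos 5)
Your argument is correct and follows essentially the same route as the paper: both reduce to the two facts that, by freeness, $|Gx|=|G_{p(x)}|$ almost everywhere, and that invariance of $\mu$ forces a fundamental domain of the finite part to carry conditional mass $1/|G_e|$ on each orbit. The only difference is one of presentation: the paper averages $\mu^{e_i}(D^{e_i})$ over the finite $G$-orbit $\{e_1,\dots,e_n\}$ of a unit $e$ in $G^0$ and appeals to invariance in one line, whereas you make that very step precise via the mass-transport identity for the pmp equivalence relation $R=G\ltimes X$ applied to the kernel $F(x,y)=\mathbf 1_D(y)/|[x]_R|$ --- a sound and, if anything, more careful justification of the same computation.
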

\begin{proof}
Let 
\[
X_\mathrm{f}:=\{x\in X\mid |Gx|<\infty\}. 
\]
Since $G\acts X$ is free $|Gx|=|G_{\r(x)}|$ almost surely so 
\[
\mu^e(X_\mathrm{f}^e)=\begin{cases} 0\text{ if }|G_e|=\infty\\ 1\text{ else}\end{cases}
\]
for ae $e\in G^0$ and if $D\subset X_{\mathrm{f}}$ is a fundamental domain then if $e_1,e_2...,e_n$ are the units isomorphic to $e$ we have
\[\sum_{i=1}^{n}\mu^{e_i} (D^{e_i})=\frac{n}{|G_e|}\]
therefore 
\[
\mu(D)=\int_{G^0}\frac 1 {|G_e|}\d\h(e).
\]
using invariance of $\mu$.
\end{proof}

\begin{remark}
An analog of the so-called ``fixed price problem'' \cite{Gab} for $s$ is the question of whether $s(G\ltimes X)$ depends  on $G$ only and not on the essentially free pmp action $G\acts X$. This ``fixed sofic dimension problem'' holds for  example for groupoids with fixed price 1 (=all their free pmp actions have cost 1) provided all their free pmp actions are sofic, but it is open in general. Also open is whether a groupoid is sofic if and only if all its free pmp actions are sofic. 
\end{remark}

\begin{proof}[Proof of Proposition \ref{P - amenable groupoid actions}] Since the Bernoulli action $G \acts [0,1]^G$ is essentially free $G\ltimes [0,1]^G$ is a pmp equivalence relation so  by Theorem \ref{T-action Bernoulli infinite} and  \cite[Corollary 5.2]{DKP1}
\[
s(G)=s(G\ltimes [0,1]^G)=1-\mu(D)=1-\int_{G^0} \frac 1{|G_e|} \d\h(e)
\]
where $D$ is the fundamental domain of the set of finite classes of the amenable pmp equivalence relation $G\ltimes [0,1]^G$.

If $G\acts X$ is a pmp action then the action $G\ltimes X\acts [0,1]^{G\ltimes X}$ is pmp and essentially free so by  Theorem \ref{T-action Bernoulli infinite}
\[
s(G\ltimes X)=s\left ((G\ltimes X)\ltimes [0,1]^{G\ltimes X}\right).
\]
Now the measure isomorphism 
\begin{align*}
(G\ltimes X)\ltimes [0,1]^{G\ltimes X}&\to G\ltimes \left (X\times [0,1]^{G\ltimes X}\right)\\
((s,x),y)&\mapsto (s,(x,y))
\end{align*}
is an isomorphism of pmp equivalence relations, where the action $G\acts X\times [0,1]^{G\ltimes X}$ is diagonal and $G\acts [0,1]^{G\ltimes X}$ on the first coordinate. 

So
\begin{align*}
s(G\ltimes X)&=s\left (G\ltimes \left (X\times [0,1]^{G\ltimes X}\right)\right)\\
&=1-\mu(D')\\
&=1-\int_{G^0} \frac 1{|G_e|} \d\h(e).
\end{align*}
where $D'$ is the is the fundamental domain of the set of finite classes of the amenable pmp equivalence relation $G\acts X\times [0,1]^{G\ltimes X}$. 
So 
\[
s(G)=s(G\ltimes X).
\]
\end{proof}

Another example is the proof that the invariance of $s$ under orbit equivalence (Theorem \ref{T - OE invariance}) established in \cite[Theorem 4.1]{DKP1} is  equivalent to the statement for groupoids  in \cite[Theorem 2.1]{DKP2}.

\begin{theorem}
Let $G$ be a pmp groupoid, $E,F$ be transversally generating sets, then $s(E)=s(F)$, $\underline s(E)=\underline s(F)$ and $s^\omega(E)=s^\omega(F)$.
\end{theorem}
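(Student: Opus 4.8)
The plan is to deduce this groupoid statement from its equivalence-relation counterpart, Theorem \ref{T - OE invariance}, through the Bernoulli correspondence. Fix a diffuse base, say $(X_0,\mu_0)=([0,1],\mathrm{Leb})$, and form the Bernoulli action $G\acts X:=X_0^G$. Since $\mu_0$ is diffuse this action is essentially free (the diffuse case of the essential-freeness lemma proved above, exactly as invoked in the proof of Proposition \ref{P - amenable groupoid actions}), so by Lemma \ref{L - free groupoid actions are equivalence relations} the crossed product $R:=G\ltimes X$ is a pmp \emph{equivalence relation}. This is the object to which I will transport the problem.

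Next I record the transport facts. Writing $\bb G\hookrightarrow\bb R$ in the natural way and $M:=L^\infty(X,\{0,1\})$, I claim that for any transversally generating set $K\subset\bb G$ the set $\tilde K:=K\cup M$ is transversally generating for $R$, and moreover
\[
s(K)=s_R(\tilde K),
\]
with the analogous equalities for $\underline s$ and $s^\omega$. The generation claim holds because $\bb G\cup M$ transversally generates $\bb R$ (as noted before Proposition \ref{P-s via HA}) while $K$ transversally generates $\bb G$, and approximate generators yield approximate products by uniform continuity of the monoid operations. For the displayed equality, $s_R(\tilde K)=s(K,M)$ is the combinatorial reorganization of $\SA_R$-counts into $\HA$-pairs $(\sigma,\varphi)$ underlying Proposition \ref{P-s via HA} (which holds verbatim with $\bb G$ replaced by $K$), and $s(K,M)=s(K)$ is the Bernoulli computation of Lemma \ref{P-action Bernoulli} and Theorem \ref{T-action Bernoulli infinite}, whose proofs manipulate only finitely many generators $E\subset F$ at a time and so respect the restriction of the outer quantifiers to finite subsets of $K$.

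With these in hand the conclusion is immediate. Given two transversally generating sets $E,F$ of $G$, both $\tilde E=E\cup M$ and $\tilde F=F\cup M$ transversally generate the equivalence relation $R$, so Theorem \ref{T - OE invariance} yields $s_R(\tilde E)=s_R(\tilde F)$, together with the analogous identities for $\underline s$ and $s^\omega$. Combining with the transport equality gives
\[
s(E)=s_R(\tilde E)=s_R(\tilde F)=s(F),
\]
and likewise for $\underline s$ and $s^\omega$. (Equivalently, since $\bb R$ itself transversally generates $R$, one gets $s_R(\tilde K)=s_R(\bb R)=s(R)=s(G)$ by Theorems \ref{T - OE invariance} and \ref{T-action Bernoulli infinite}, so every transversally generating $K$ satisfies $s(K)=s(G)$ directly.)

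The main obstacle is the transport equality $s(K)=s_R(\tilde K)$ for a \emph{general} transversally generating $K$ rather than for $K=\bb G$. Conceptually it is just the already-established identity $s(G\ltimes X)=s(G)$ carried out relative to $K$, but making this precise requires re-reading the proofs of Proposition \ref{P-s via HA}, Lemma \ref{P-action Bernoulli} and Theorem \ref{T-action Bernoulli infinite} and checking that each construction—the random partition $\{A_i\}$, the map $\varphi$ built from $\sigma$, and the comparison of $\SA$- with $\HA$-counts—uses only a finite generating window and therefore is compatible with restricting the two outer $\sup_E\inf_F$ quantifiers to finite subsets of $K$. Once this bookkeeping is verified the argument closes with no further input.
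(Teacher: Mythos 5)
Your proof is correct and follows essentially the same route as the paper's: pass to the Bernoulli crossed product, which is a pmp equivalence relation by essential freeness, transport $s$ of a generating set across Theorem \ref{T-action Bernoulli infinite}, and invoke Theorem \ref{T - OE invariance}. The only (harmless) deviations are cosmetic --- you take a diffuse base $[0,1]$ where the paper takes $\{0,1\}^G$ under an added infinite-fibers assumption, and you adjoin all of $L^\infty(X,\{0,1\})$ rather than two cylinder projections --- and you are rightly explicit that the relative transport equality $s(K)=s_R(K\cup M)$ rests on the finite-window nature of the proofs of Proposition \ref{P-s via HA} and Theorem \ref{T-action Bernoulli infinite}, a point the paper uses implicitly.
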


\begin{proof}
If $F$ is a transversally generating set of $G$ (which we assume to have infinite fibers) then $F\cup \{p_{B_0},p_{B_1}\}$ (as defined in Lemma \ref{lemma B0 def}) is a transversally generating set of the pmp equivalence relation $G\ltimes \{0,1\}^G$. By Theorem \ref{T-action Bernoulli infinite} 
\[
s(F)=s(F\cup \{p_{B_0},p_{B_1}\})
\] 
so by Theorem \ref{T - OE invariance} we have
\[
s(E)=s(E\cup \{p_{B_0},p_{B_1}\})=s(F\cup \{p_{B_0},p_{B_1}\})=s(F).
\]
The same applies to $\underline s$ and $s_\omega$.
\end{proof}

We conclude with the proof Theorem \ref{T - FPF} which is our main illustration of the correspondence principle.

\begin{proof}[Proof of Theorem \ref{T - FPF}] Let $\tilde G_i:={G_i}_{|G_3^0}$ and $\tilde G:=G_{|G_3^0}=\tilde G_{1}*_{G_{3}}\tilde G_{2}$ then for every essentially free pmp action $\tilde G\acts X$ of $\tilde G$ we have
\begin{align*}  
s(G)-1& =\h(G_3^0)  \left ( s(\tilde G_{1}*_{G_{3}}\tilde G_{2})-1\right)\ \text{by Proposition \ref{P-scaling}}\\
& \geq \h(G_3^0)  \left (  s(\tilde G_{1}*_{G_{3}}\tilde G_{2}\ltimes X)-1\right)\ \text{by Proposition \ref{P-action general}}\\
 & =\h(G_3^0)  \left ( s\left((\tilde G_{1}\ltimes X_{|1})*_{(G_{3}\ltimes X_{|3})}(\tilde G_{2}\ltimes X_{|2})\right)-1\right)\text{ by Lemma \ref{L - free product decompositions actions}}\\
 & =\h(G_3^0)  \left (s(R_{1}*_{R_{3}}R_{1})-1\right)\text{ where } R_i:=\tilde G_{i}\ltimes X_{|i} \text{  are pmp }\\
 &\hskip1cm  \text{ equivalence relations by Lemma \ref{L - free groupoid actions are equivalence relations}}\\
 & =\h(G_3^0)  \left (s(R_{1})+s(R_{2})-s(R_3)-1\right)\text{ by \cite[Theorem 1.2]{DKP1}}\\
 & =\h(G_3^0)  \left (s(R_{1})+s(R_{2})-s(G_3)-1\right)\text{ by Proposition \ref{P - amenable groupoid actions}}
\end{align*}
If $\tilde G\acts X$ is Bernoulli then  
\[
s(\tilde G_{1}*_{G_{3}}\tilde G_{2}) = s(\tilde G_{1}*_{G_{3}}\tilde G_{2}\ltimes X)
\]
by Theorem \ref{T-action Bernoulli infinite} and $\tilde G_{i}\acts X_{|i}$ for $i=1,2$ are (isomorphic to) Bernoulli actions by Lemma \ref{L - subshifts are Bernoulli}. Therefore $s(R_i)=s(\tilde G_i)$, $i=1,2$ by Theorem \ref{T-action Bernoulli infinite}. Then
\begin{align*}  
s(G)-1& =\h(G_3^0)  \left (s(R_{1})+s(R_{2})-s(G_3)-1\right)\\
&=\h(G_3^0)  \left (s(\tilde G_1)+s(\tilde G_{2})-s(G_3)-1\right)\text{ by Lemma \ref{L - subshifts are Bernoulli} and }\\
&\hskip7cm\text{ Theorem \ref{T-action Bernoulli infinite} }\\
&=\h(G_3^0)  \left (s(\tilde G_1)-1\right)+\h(G_3^0)  \left (s(\tilde G_{2})-1\right)-\h(G_3^0)  \left (s(G_3)-1\right)\\
&=\h(G_1^0)  \left (s(G_1)-1\right)+\h(G_2^0)  \left (s(G_{2})-1\right)-\h(G_3^0)  \left (s(G_3)-1\right)\\
&\hskip7cm \text{ by Proposition \ref{P-scaling}}\\
&=\h(G_1^0)  s(G_1)+\h(G_2^0)s(G_{2})-\h(G_3^0) s(G_3) -\h(G_1^0)-\h(G_2^0)+\h(G_3^0)\\
&=\h(G_1^0)  s(G_1)+\h(G_2^0)s(G_{2})-\h(G_3^0) s(G_3) -1
\end{align*}
(We note that the inequality $s(G_{1}*_{G_{3}}G_{2})\leq s(G_{1})+s(G_{2})-s(G_3)$ could also be proved by a direct argument.)
\end{proof}

\end{document}